\newcommand{\cleqn}{\setcounter{equation}{0}}
\newcommand{\clth}{\setcounter{theorem}{0}} 
\newcommand{\clfig}{\setcounter{figure}{0}} 
\newcommand {\sectionnew}[1]{\section{#1}\cleqn\clth\clfig}
\newcommand{\beq}{\begin{equation}}
\newcommand{\eeq}{\end{equation}}
\newcommand{\beqa}{\begin{eqnarray}}
\newcommand{\eeqa}{\end{eqnarray}}
\newcommand{\beaa}{\begin{eqnarray*}}
\newcommand{\ben}{\begin{eqnarray*}}
\newcommand{\eaa}{\end{eqnarray*}}
\newcommand{\een}{\end{eqnarray*}}
\newcommand \nc {\newcommand}
\numberwithin{equation}{section}
\newtheorem{theorem}{Theorem}[section]
\newtheorem{lemma}[theorem]{Lemma}
\newtheorem{proposition}[theorem]{Proposition}
\newtheorem{corollary}[theorem]{Corollary}
\newtheorem{definition}[theorem]{Definition}
\newtheorem{remark}[theorem]{Remark}
\newtheorem{conjecture}[theorem]{Conjecture}
\newtheorem{question}[theorem]{Question}
\newtheorem*{lemmaa}{Lemma A}
\newtheorem*{lemmab}{Lemma B}
\nc \thref{Theorem \ref}
\nc \leref{Lemma \ref}
\nc \prref{Proposition \ref}
\nc \coref{Corollary \ref}
\nc \deref{Definition \ref}
\nc \exref{Example \ref}
\nc \reref{Remark \ref}
\newcommand{\CC}{\mathbb{CP}^1}
\renewcommand{\P}{\mathcal{P}}
\newcommand{\Q}{\mathcal{Q}}
\newcommand{\W}{\mathcal{W}}
\newcommand{\A}{\mathcal{A}}
\newcommand{\B}{\mathcal{B}}
\newcommand{\C}{\mathbb{C}}
\newcommand{\D}{\mathcal{D}}
\newcommand{\F}{\mathcal{F}}
\renewcommand{\H}{\mathcal{H}}
\newcommand{\I}{\mathcal{I}}
\renewcommand{\L}{\mathcal{L}}
\newcommand{\M}{\mathcal{M}}
\renewcommand{\O}{\mathcal{O}}
\newcommand{\QQ}{\mathbb{Q}}
\newcommand{\T}{\mathcal{T}}
\newcommand{\Z}{\mathbb{Z}}
\newcommand{\bnu}{\overline{\nu}}
\newcommand{\bW}{\overline{W}}
\newcommand{\g}{\mathbf{g}}
\newcommand{\f}{\mathbf{f}}
\newcommand{\q}{\mathbf{q}}
\renewcommand{\t}{\mathbf{t}}
\newcommand{\x}{\mathbf{x}}
\newcommand{\y}{\mathbf{y}}
\def\res{\mathop{\rm res}\nolimits}
\def\d{\partial}
\def\ev{\mathop{\rm ev}\nolimits}
\def\diag{\mathop{\rm diag} \nolimits}
\def\iso{\cong}
\def\tensor{\otimes}
\def\Kahler{K\"ahler}
\def\Poincare{Poincar\'e}
\def\({\left(}
\def\){\right)}
\def\[{\left[}
\def\]{\right]}
\def\<{\left\langle}
\def\>{\right\rangle}
\def\gl{\lambda}
\def\ge{\epsilon}
\def\ga{\alpha}
\def\gb{\beta}
\newcommand{\stilde}{\tilde{s}}
\newcommand{\proj}{\mathbb{P}}
\begin{document}
\title[Orbifold structures and integrable hierarchies]
{Equivariant orbifold structures on the projective line and 
integrable hierarchies}

\author{Todor E. Milanov}
\address{Department of Mathematics\\ Stanford University\\ 
Stanford\\ CA 94305--2125\\ USA}
\email{milanov@math.stanford.edu}

\author{Hsian-Hua Tseng}
\address{Department of Mathematics\\ University of British Columbia\\ 
1984 Mathematics Road\\ Vancouver\\ B.C. V6T 1Z2\\ Canada}
\email{hhtseng@math.ubc.ca}

\date{\today}
\thanks{{\em 2000 Math. Subj. Class.} 14N35, 17B69, 32S30}
\thanks{
{\em Key words and phrases.} oscillating integrals, Frobenius structure,
orbifold quantum cohomology, bosonic Fock space, vertex operators, 
Hirota quadratic (bilinear) equations}

\begin{abstract}
Let ${\CC}_{k,m}$ be the orbifold structure on ${\CC}$ obtained 
via uniformizing the neighborhoods of 0 and $\infty$ respectively
by $z\mapsto z^k$ and $w\mapsto w^m.$ The diagonal action of the torus 
$T= \(S^1\)^2$ on ${\CC}$ induces naturally an action on the orbifold
${\CC}_{k,m}.$ In this paper we prove that if $k$ and $m$ are 
co-prime then Givental's prediction of the equivariant total 
descendent Gromov-Witten potential of ${\CC}_{k,m}$ satisfies certain
Hirota Quadratic Equations (HQE for short).
We also show that after an appropriate change of the variables, similar to
Getzler's change in the equivariant Gromov-Witten theory of $\CC$, the HQE 
turn into the HQE of the 2-Toda hierarchy, i.e.,  the 
Gromov-Witten potential of ${\CC}_{k,m}$ is a tau-function of 
the 2-Toda hierarchy. More precisely, we obtain a sequence of tau-functions
of the 2-Toda hierarchy from the descendent
potential via some translations. The later condition, that all
tau-functions in the sequence are obtained from a single one via 
translations, imposes a serious constraint on the solution of the 
2-Toda hierarchy. Our theorem leads
to the discovery of a new integrable hierarchy (we suggest to be 
called the Equivariant Bi-graded Toda Hierarchy), obtained from 
the 2-Toda hierarchy via a reduction similar to the one in \cite{Ge}. 
We conjecture that this new hierarchy governs, i.e., uniquely determines, the
equivariant Gromov-Witten invariants of $\CC_{k,m}.$
\end{abstract}
\maketitle

\sectionnew{Introduction}

Let $\CC_{k,m}$ be the orbifold structure on $\CC$ 
obtained via uniformizing the neighborhoods of 0 and $\infty$ respectively
by $z\mapsto z^k$ and $w\mapsto w^m.$ This uniformization induces naturally 
an orbifold structure on the hyperplane class bundle, such that the cyclic
groups $\Z_k$ and $\Z_m$ act trivially on the corresponding fibers.  The 
resulting orbifold bundle is denoted $\O^{\rm unif}(1).$

Let $T=S^1\times S^1$ and denote by $\nu_0$ and 
$\nu_1$ the characters of the representation dual to the standard
representation of $T$ in $\C^2.$ 
The $T$-equivariant cohomology of a point is naturally
identified with $\C[\nu_0,\nu_1].$ Furthermore, the diagonal
action of $T$ on $\C^2$ induces a $T$-action on 
$\C\mathbb{P}^1 = \(\C^2-\{0\}\)/\C^*$ and the later naturally induces a $T$-action on the orbifold $\CC_{k,m}.$  
We also equip the bundle $\O^{\rm unif}(1)$ 
with a $T$-action in such a way that the corresponding characters
on the fibers of $\O^{\rm unif}(1)$ at 0 and $\infty$ are respectively 
$\nu_0$ and $\nu_1.$ 

The equivariant orbifold cohomology $H$ of $\CC_{k,m}$ is by definition the 
equivariant cohomology of its inertia orbifold:
\ben
I\CC_{k,m}= \CC_{k,m}\  \sqcup \ 
\bigsqcup_{i=1}^{k-1} [{\rm pt}/\Z_k]\ \sqcup\ 
\bigsqcup_{j=1}^{m-1} [{\rm pt}/\Z_m],
\een 
where the orbifolds $[{\rm pt}/\Z_k]$ and $[{\rm pt}/\Z_m]$ are called 
{\em twisted sectors} and the torus $T$ acts trivially on them.     
We fix a basis in $H$:
\ben
{\bf 1}_{i/k} ,\ 1\leq i\leq k-1, & 
{\bf 1}_{0/k}= (p-\nu_1)/(\nu_0-\nu_1), \\
{\bf 1}_{j/m} ,\ 1\leq j\leq m-1, & 
{\bf 1}_{0/m} = (p-\nu_0)/(\nu_1-\nu_0),
\een 
where $p$ is the equivariant 1-st Chern class of $\O^{\rm unif}(1),$ 
${\bf 1}_{i/k}$ and ${\bf 1}_{j/m}$ are the units in the cohomologies of 
the corresponding twisted sectors and the indices $i/k$ and $j/m$ are 
identified respectively with elements in $\Z_k$ and $\Z_m$.  
Finally, let $(\ ,\ )$ be the equivariant orbifold {\Poincare} pairing in $H:$
\ben
\({\bf 1}_{0/k},{\bf 1}_{0/k}\)=1/(\nu_0-\nu_1),\quad 
\({\bf 1}_{i/k},{\bf 1}_{(k-i)/k}\)=1/k,\quad 1\leq i\leq k-1,\\
\({\bf 1}_{0/m},{\bf 1}_{0/m}\)=1/(\nu_1-\nu_0),\quad
\({\bf 1}_{j/m}, {\bf 1}_{(m-j)/m}\)=1/m,\quad 1\leq j\leq m-1,
\een 
and all other pairs of cohomology classes are orthogonal.  

By definition {\em the total descendent Gromov--Witten potential} 
of $\CC_{k,m}$ is
\ben
\D(\q) = \exp \left(\sum_{g,n,d} \ge^{2g-2}\frac{Q^d}{n!} 
\int_{[\overline{\M}_{g,n}(\CC_{k,m},d)]^{vir}} 
\prod_{a=1}^n\(\psi_a+\sum_{l=0}^\infty \ev_a^*(q_l)\psi_a^l \)\right),
\een
where $\overline{\M}_{g,n}(\CC_{k,m},d)$ is the moduli space of degree 
$d\in \Z$ stable holomorphic maps $f$ from a genus-$g$ 
Riemann surface, equipped with $n$ marked orbifold points, 
$\ev_a:\overline{\M}_{g,n}(\CC_{k,m},d)\rightarrow I\CC_{k,m}$ is the 
evaluation map at the $a$-th marked point, $\psi_a$ is the equivariant 1-st
Chern class of the line bundle on 
$\overline{\M}_{g,n}(\CC_{k,m},d)$ corresponding to the cotangent line at the $a$-th marked point,
$\q= \sum_{l=0}^\infty q_l z^l \in H[z]$, the integrals are performed against the {\em virtual fundamental classes} $[\overline{\M}_{g,n}(\CC_{k,m}, d)]^{vir}$, and the sum is over all non-negative
integers $g,n,d$ for which the moduli space 
$\overline{\M}_{g,n}(\CC_{k,m},d)$ is non-empty. 

The potential $\D$ is identified with an element of a bosonic Fock
space $\B$ which by definition is the vector space of functions on 
$H[z]$ which belong to the formal neighborhood of $-{\bf 1} \, z.$
Note that ${\bf 1} = {\bf 1}_{0/k}+{\bf 1}_{0/m}, $ therefore if we put
\ben
q_n = \sum_{i=0}^{k-1} q^{i/k}_n {\bf 1}_{i/k} + 
      \sum_{j=0}^{m-1} q^{j/m}_n {\bf 1}_{j/m},
\een 
then $\B$ is the set of formal series in the variables 
$q_n^{i/k}+\delta_n^1\delta_{0/k}^{i/k}  ,
 q_n^{j/m}+ \delta_n^1\delta_{0/m}^{j/m}$, whose coefficients are
formal Laurent series in $\ge.$ Here we used the Kronecker 
symbols: $\delta_a^b=1$ or $0$ depending whether $a=b$ or $a\neq b.$

We introduce the following {\em vertex operators} acting on the Fock space
$\B$:
\beq\label{vop_0}
\Gamma^{\pm } = \exp \(\pm 
\sum_{n\in \Z}\sum_{i=1}^k 
\frac{ \prod_{l=-\infty}^n (\nu + (-i/k+l)z)}
     { \prod_{l=-\infty}^0 (\nu + (-i/k+l)z)} \ 
\gl^{-(n+1)k+i} \ {\bf 1}_{(k-i)/k} \)\sphat
\eeq
where $\nu=(\nu_0-\nu_1)/k$, and the hat $\widehat{\quad}$ indicates the following
quantization rule. The exponent $\f^\pm$ of $\Gamma^\pm$ is written as
a product of two exponents: the first (left) one contains the summands with 
$n<0$ and the second (right) one with $n\geq 0$. Each summand 
corresponding to $n<0$ is expanded into a series of $z^{-1}.$  
The quantization rule consists of representing the terms 
$\phi_\ga (-z)^{-n-1}, n\geq 0$ and $\phi_\ga z^n, n\geq 0$
respectively by the operators of multiplication by 
the linear function $- \ge^{-1}\sum_\gb \eta_{\ga\gb}q_n^\gb$ and the 
differential operator $\ge \d/\d q_n^\ga.$ Here $\eta_{\ga\gb}= 
({\bf 1}_\ga,{\bf 1}_\gb)$ is the tensor of the {\Poincare} pairing.
Similarly, we introduce the vertex operator
$\overline{\Gamma}^{\pm}$ obtained from $\Gamma^\pm$ by switching 
$\nu_0\leftrightarrow\nu_1,$ and $k\leftrightarrow m.$

We say that a vector $\D$ in the Fock space $\B$ satisfies the Hirota quadratic equations (HQE) below if for each pair of integers $l$ and $n$
\beqa\notag
&&
\res_{\gl=\infty} 
\left(\gl^{n-l}\ \Gamma^{-}\tensor\Gamma^{+} - (Q/\gl)^{n-l}\ 
\overline{\Gamma}^{+}\tensor\overline{\Gamma}^{-}\right)\\ 
&&
\label{HQE}
\( e^{(n+1)\,\widehat{\bf 1}_{0/k} +n\,\widehat{\bf 1}_{0/m} }\tensor 
e^{
l\,\widehat{\bf 1}_{0/k}+(l+1)\,\widehat{\bf 1}_{0/m} } \) 
\(\D\tensor\D\)\ \frac{d\gl}{\gl} =0\ .
\eeqa
The HQE \eqref{HQE} are interpreted as follows. 
Switch to new variables $\x$ and $\y$ via the substitutions: $\q'=\x+\y$, 
$\q''=\x-\y$.
The LHS of the HQE expands as a series in $\y$ with coefficients Laurent 
series in $\gl^{-1}$, whose coefficients are quadratic polynomials in $\D,$ 
its partial derivatives and their translations. The residue is defined as 
the coefficient in front of $\gl^{-1}$.

Motivated by Givental's formula of the total descendent potential
of a {\Kahler} manifold with semi-simple quantum cohomology, we 
introduce an element $\D^{\rm Fr}$ of the Fock space of the following
type:
\beq\label{sashas_formula}
\D^{\rm Fr}= e^{{\rm F}^{(1)}(\tau)} \widehat{S}_\tau^{-1} 
\(\Psi_\tau R_\tau e^{U_\tau/z}\)\sphat \ \prod_{i=1}^{k+m} \D_{\rm pt}(\q^i).
\eeq
The different ingredients in this formula will be explained later. 
For a K\"ahler manifold equipped with a Hamiltonian torus action whose $0$ and $1$-dimensional strata are isolated, Givental \cite{G1, G3} proved that (\ref{sashas_formula}) agrees with the equivariant total descendant Gromov-Witten potential. His arguments, based on an ingenious localization analysis,  may be extended to orbifolds and, together with some new ingredients, used to prove that (\ref{sashas_formula}) agrees with the equivariant total descendant orbifold Gromov-Witten potential for a K\"ahler orbifold with a Hamiltonian torus action whose $0$ and $1$-dimensional strata are isolated. Details will be given in \cite{ts}.

Our goal here is to prove that the conjectural formula (\ref{sashas_formula}) has some very interesting property which in particular leads to the discovery of a new integrable hierarchy given in terms 
of HQE by \eqref{HQE}. 
Our main result can be stated this way.

\begin{theorem}\label{t1}The function 
$\D^{\rm Fr}$ satisfies \eqref{HQE}.
\end{theorem}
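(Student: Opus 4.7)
The plan is to adapt Givental's conjugation-and-stationary-phase strategy (as in the Givental--Milanov treatment of semisimple Frobenius manifolds, and the recent extension to the extended Toda case) to the orbifold $\CC_{k,m}$. Concretely, I would push the vertex operators $\Gamma^{\pm}$ and $\overline{\Gamma}^{\pm}$ through each of the quantized symplectic operators appearing in \eqref{sashas_formula}, so that the global HQE \eqref{HQE} evaluated at $\D^{\rm Fr}$ is reduced to a collection of local Hirota equations for the point potentials $\D_{\rm pt}(\q^i)$; each of the latter is a KdV tau-function and so satisfies the KdV HQE by the Witten--Kontsevich theorem.

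The first step is to identify the Gamma-product phase form inside \eqref{vop_0} with the asymptotic expansion at $\gl=\infty$ of a period vector $I^{(n)}(\gl,\tau)$ on the Frobenius manifold underlying the equivariant orbifold quantum cohomology of $\CC_{k,m}$ (arising from its Landau--Ginzburg mirror); the operator $\overline{\Gamma}^{\pm}$ should be matched to the asymptotic expansion of the same period at $\gl=0$. The factor $\gl^{-(n+1)k+i}$ and the $\Z_k$-indexed basis elements encode the isotropy of $\CC_{k,m}$ at $0$, and symmetrically for $\infty$. Under this identification the two contributions $\gl^{n-l}\,\Gamma\tensor\Gamma$ and $(Q/\gl)^{n-l}\,\overline{\Gamma}\tensor\overline{\Gamma}$ in \eqref{HQE} appear as the two boundary asymptotics of one global period, and the dilaton-type shifts $e^{(n+1)\widehat{\bf 1}_{0/k}+n\widehat{\bf 1}_{0/m}}\tensor e^{l\widehat{\bf 1}_{0/k}+(l+1)\widehat{\bf 1}_{0/m}}$ record precisely the monodromy of the phase form as $\gl$ encircles these singular loci.

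The second step is to conjugate the vertex operators through $e^{F^{(1)}(\tau)}\,\widehat{S}_\tau^{-1}(\Psi_\tau R_\tau e^{U_\tau/z})\sphat$. Conjugation by $\widehat{S}_\tau^{-1}$ implements the Dubrovin flow on the period, $\widehat{\Psi}_\tau$ rewrites everything in canonical coordinates, and $\widehat{R}_\tau$ performs the stationary-phase expansion of the oscillating integral at the $k+m$ critical points $u_i(\tau)$ of the superpotential. Combined with the shift $e^{U_\tau/z}$, this factorises the global vertex operator into a tensor product of $k+m$ local KdV-type vertex operators $\Gamma_i^{\pm}$, each acting on a single $\D_{\rm pt}(\q^i)$. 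The residue $\res_{\gl=\infty}$ can then be deformed into a sum of local residues at $\gl=u_i(\tau)$; similarly the $\overline{\Gamma}$ residue at $\gl=0$ localises to the same critical points under $\gl\mapsto Q/\gl$. Off-diagonal interactions between distinct critical points yield regular integrands and so vanish, while each diagonal contribution reduces to the standard KdV HQE for $\D_{\rm pt}(\q^i)$, which is satisfied.

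I expect the main obstacle to lie in the second step: executing the stationary-phase comparison of \eqref{vop_0} with the output of the $\widehat{R}_\tau$-action, and checking that the explicit Gamma-like products carrying the equivariant parameter $\nu=(\nu_0-\nu_1)/k$ and the orbifold isotropy match, term by term, the asymptotic series produced by $R_\tau$ after the $\widehat{S}_\tau^{-1}$-translation. A secondary delicate point is the matching between the $\Gamma$-residue at $\gl=\infty$ and the $\overline{\Gamma}$-residue at $\gl=0$: since these live over opposite ends of $\CC\setminus\{0,\infty\}$, their combination into a single contour-deformation argument will require a careful analytic continuation (equivalently a Gauss--Manin monodromy analysis) of the global period vector, with the parameter $Q$ providing the $Q$-adic reorganisation that allows both residues to be handled uniformly.
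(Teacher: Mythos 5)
Your plan follows the same architecture as the paper's proof: conjugate the vertex operators through $\widehat S_\tau$ to pass from $\D^{\rm Fr}$ to the tame ancestor-type potential $\A_\tau^{\rm Fr}$ (this is exactly \thref{t3}), rewrite the two terms of \eqref{HQE} as the residues at $x=\infty$ and $x=0$ of one rational 1-form \eqref{hqe_a}, then localize at the $k+m$ critical points of the mirror superpotential and conjugate the local vertex operators through $\(\Psi R e^{U/z}\)\sphat$ (Givental's Lemmas A and B), reducing each critical-point contribution to the KdV HQE for $\D_{\rm pt}$, which holds by Kontsevich. So the identification of \eqref{vop_0} with period-type series of the Landau--Ginzburg mirror, the $\widehat S_\tau$-conjugation, and the stationary-phase localization are all the right moves.

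However, the two places you defer as ``obstacles'' are precisely where the proof lives, and one essential step is missing altogether. First, the identity $S_\tau\f^\pm(\gl)=\f^\pm_\tau(x)$ is not a formal consequence of period asymptotics: it is proved by pairing \eqref{vop_0} against the explicit equivariant orbifold $J$-function of $\CC_{k,m}$ (\prref{J} and \coref{dJ}, obtained via the identification with $\proj(k,m)$ and valid as stated only for $k,m$ coprime --- a hypothesis your argument never invokes), combined with the transformation law of \prref{transformation} under the coordinate changes \eqref{change} and \eqref{change_y}. Second, the translation operators $e^{(n+1)\widehat{\bf 1}_{0/k}+n\widehat{\bf 1}_{0/m}}\tensor e^{l\widehat{\bf 1}_{0/k}+(l+1)\widehat{\bf 1}_{0/m}}$ are not monodromy bookkeeping; they are removed at the price of explicit commutation factors such as $e^{(l-n-1)\gl^k/(\nu_0-\nu_1)}$, and these must be combined with the quadratic forms $W,\overline W$ of \eqref{s_conj}, computed in closed form in \eqref{w}--\eqref{w_bar}, and with the constants $C,\overline C$ (again read off from the $J$-function), which must satisfy $C-\overline C=\tau(\nu_0+\nu_1)/(\nu_0-\nu_1)$; only then do the $\Gamma$- and $\overline\Gamma$-terms become residues of one and the same form $c_r(\tau,x)\,dx$, so that your contour deformation to the critical points is even meaningful. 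Third, at each critical point the reduction to KdV is not just ``diagonal terms survive'': after Lemmas A and B one is left with scalar prefactors $c_\pm(\tau,\Lambda)$ as in \eqref{c_pm}, and one must prove they are analytic at $\Lambda=u_i$ and equal, which requires the phase-form computation (the two $\pm\pi i$ contributions cancelling modulo $2\pi i$). This last verification is the main analytic input beyond Givental's $A_{n-1}$ and $\C P^1$ arguments, and your sketch does not address it.
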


Let $y_1,y_2,\ldots $ and $ \overline y_1, \overline y_2,\ldots $ be 
two sequences of time variables related to $q_0^{i/k},q_{1}^{i/k},\ldots$ and 
$q_{0}^{j/m}, q_{1}^{j/m},\ldots$ via an upper-triangular 
linear change defined by the following relations:
{\allowdisplaybreaks
\beqa\label{change_qy}
\sum_{n\geq 0} (-w)^{-n-1}\frac{\d}{\d q_n^{i/k}} & = &
\sum_{n\geq 0} 
\frac{g_{i/k}}{\prod_{l=0}^{n}\(\nu -(l+i/k)w\)}\, 
\frac{\d}{\d y_{nk+i}}, \\
\label{change_qby}
\sum_{n\geq 0} (-w)^{-n-1}\frac{\d}{\d q_n^{j/m}} & = &
\sum_{n\geq 0} 
\frac{g_{j/m}}{\prod_{l=0}^{n}\(\bnu -(l+j/m)w\)}\, 
\frac{\d}{\d \overline{y}_{nm+j}},
\eeqa}
where $n\geq 0$, $1\leq i\leq k$, $1\leq j\leq m,$ 
$\bnu = (\nu_1-\nu_0)/m$ and  
\ben
g_\ga:=\({\bf 1}_\ga,{\bf 1}_{-\ga}\),\quad \ga\in \Z_k\sqcup\Z_m.
\een

\begin{theorem}\label{t2} 
Let $\D_n(\q) = Q^{n^2/2}\D^{\rm Fr}(\q +n\ge\,{\bf 1}).$ Then
the changes \eqref{change_qy}--\eqref{change_qby} transforms $\{\D_n\}$ 
into a sequence of tau-functions of the 2-Toda hierarchy. 
\end{theorem}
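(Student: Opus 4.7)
The plan is to deduce Theorem~\ref{t2} from Theorem~\ref{t1} in two stages. First, absorb the shift operators in (\ref{HQE}) into the discrete indices via the translations built into $\D_n(\q)=Q^{n^2/2}\D^{\rm Fr}(\q+n\ge{\bf 1})$. Second, apply the change of variables (\ref{change_qy})--(\ref{change_qby}) and identify the result with the standard Ueno--Takasaki bilinear identity of the 2-Toda hierarchy.

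For the first stage I would unpack the quantizations $\widehat{\bf 1}_{0/k}$ and $\widehat{\bf 1}_{0/m}$ from the rule stated after (\ref{vop_0}) and use ${\bf 1}={\bf 1}_{0/k}+{\bf 1}_{0/m}$. The translation built into $\D_n$ absorbs the diagonal part $n\widehat{\bf 1}_{0/k}+n\widehat{\bf 1}_{0/m}$ of the left-factor shift $(n+1)\widehat{\bf 1}_{0/k}+n\widehat{\bf 1}_{0/m}$ in (\ref{HQE}), leaving one unit of $\widehat{\bf 1}_{0/k}$; symmetrically the right factor leaves one unit of $\widehat{\bf 1}_{0/m}$. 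These residual unit shifts correspond, in the 2-Toda language, to the index shifts $\tau_n\leftrightarrow\tau_{n+1}$ and $\tau_l\leftrightarrow\tau_{l+1}$ on opposite sides of the Ueno--Takasaki identity. The prefactor $Q^{n^2/2}$ is calibrated so that, when commuted past the vertex operators, it exactly balances the asymmetric $\gl^{n-l}$ versus $(Q/\gl)^{n-l}$ in (\ref{HQE}). The HQE is thereby recast as a bilinear identity directly in $\D_n\tensor\D_l$ (modified by the residual unit shifts), for every $(l,n)\in\Z^2$.

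The heart of the argument is the change of variables. The denominator $\prod_{l=0}^{n}(\nu-(l+i/k)w)$ in (\ref{change_qy}) is tailor-made, under $w\leftrightarrow -z$ and $i\leftrightarrow k-i$, to cancel the numerator $\prod_{l=1}^{n}(\nu+(-i/k+l)z)$ in the positive-frequency ($n\geq 0$) part of $\Gamma^\pm$. After this cancellation the positive-frequency part becomes the standard exponential differentiation factor of a 2-Toda vertex operator with spectral parameter $\gl$; a dual computation, using the transpose of (\ref{change_qy}) to express $q^{i/k}_n$ in terms of the $y_j$, turns the negative-frequency (multiplication) part into the standard multiplicative factor $\exp(\mp \sum \gl^j y_j)$. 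The analogous manipulation using (\ref{change_qby}) transforms $\overline{\Gamma}^\pm$ into 2-Toda vertex operators in the $\bar y$-variables, and the substitution $\gl\mapsto Q/\gl$ in the second residue converts $\res_{\gl=\infty}$ there into a residue at $\gl=0$, producing the two-contour Ueno--Takasaki bilinear identity. The principal obstacle is the combinatorial bookkeeping at this stage: simultaneously tracking the product cancellations, the Poincar\'e-pairing constants $g_\ga$, the equivariant parameters $\nu,\bnu$, the $\ge$-powers from quantization, and the $Q$-factors, while keeping the positive- and negative-frequency contributions consistent and matching the precise Ueno--Takasaki conventions.
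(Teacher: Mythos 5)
Your overall route is the same as the paper's: deduce \thref{t2} from \thref{t1} by rewriting the vertex operators in the variables $y,\overline y$ and matching the result with the bilinear identity of the 2-Toda hierarchy. Your ``product cancellation'' for the differentiation part is exactly the content of the paper's \leref{inv} (the Getzler-style inversion of the triangular change, proved by the same residue/coefficient-extraction argument you sketch), and your treatment of the multiplication part via the transposed change coincides with the paper's chain-rule computation. The remark about $Q^{n^2/2}$ balancing $\gl^{n-l}$ against $(Q/\gl)^{n-l}$ is also correct in substance.

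There is, however, a concrete flaw in your index bookkeeping (Stage 1), which traces back to an omission in Stage 2: you describe the transformed vertex operators as consisting only of the multiplicative factor and the differentiation factor, thereby dropping the zero-mode term ($n=0$, $i=k$, resp.\ $j=m$) of the exponent in \eqref{vop_0}. In the new variables one actually has $\Gamma^{\pm}=\exp\(\mp\sum\gl^n y_n/\ge\)\exp\(\pm\sum\frac{\gl^{-n}}{n}\ge\d_{y_n}\)e^{\pm\widehat{\bf 1}_{0/k}}$ and similarly $\overline{\Gamma}^{\pm}$ carries $e^{\pm\widehat{\bf 1}_{0/m}}$. Without these factors your residual shifts are one unit of $\widehat{\bf 1}_{0/k}$ on the left factor and of $\widehat{\bf 1}_{0/m}$ on the right factor, \emph{identically in both terms} of \eqref{HQE}; but $e^{\widehat{\bf 1}_{0/k}}$ alone translates only $q_0^{0/k}$ (equivalently only $y_k$), so $e^{\widehat{\bf 1}_{0/k}}\D_n$ is not a member of the sequence $\{\D_n\}$ and cannot be read as an index shift $\tau_n\to\tau_{n+1}$; nor can term-independent residual shifts produce the asymmetric Ueno--Takasaki pattern $(\tau_l,\tau_{n+1})$ versus $(\tau_{l+1},\tau_n)$ in \eqref{HQE_Q}. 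It is precisely the zero modes, whose signs differ between the $\Gamma$-term and the $\overline{\Gamma}$-term, that complete the leftover single-sector shifts to full translations along $\widehat{\bf 1}=\widehat{\bf 1}_{0/k}+\widehat{\bf 1}_{0/m}$: the first term then acts on $\D_n\otimes\D_{l+1}$ and the second on $\D_{n+1}\otimes\D_l$ (up to the $Q$-powers absorbed by $Q^{n^2/2}$), which is the 2-Toda bilinear identity. Once you retain the zero-mode factors your plan closes exactly as in the paper; as written, the identification with the 2-Toda tau sequence would fail.
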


\medskip

Recall that the KdV hierarchy is a reduction of the KP hierarchy which
in terms of tau-functions can be described as follows: tau-functions of KdV hierarchy are tau-functions of
KP which depend only on odd variables. In our case we have a reduction
of the 2-Toda hierarchy which in terms of tau-functions can be 
described as sequences of tau-functions of 2-Toda obtained from a single function by some translations. In Appendix \ref{2toda} we describe what kind of constraint the later condition imposes on the Lax operators of 2-Toda.

\subsection*{Acknowledgments} 
We are thankful to B. Dubrovin for showing interest in our work and for
pointing out that the 2-Toda hierarchy is too big to govern the 
Gromov-Witten theory of the orbifold $\CC_{k,m}.$ This made us realize
that our HQEs describe a reduction of the 2-Toda hierarchy. 
Many thanks to the organizers M. Bertola and D. Korotkin of the conference 
``Short program on Moduli spaces of Riemann surfaces and related topics'', held in  Montreal, 
Canada during June 3 -- 15, 2007, where the first author was given the 
opportunity to lecture on some of the results in this article. The second author is grateful to Institut Mittag-Leffler (Djursholm, Sweden) for hospitality and support during his visit to the program ``moduli spaces''.


\sectionnew{Proof of \thref{t2}}


Let $h_l(x_1,\ldots,x_n)$ and $e_l(x_1,\ldots,x_n)$ be the symmetric
polynomials of degree $l$ defined as follows:
{\allowdisplaybreaks
\ben
\prod_{i=1}^n(1+tx_i) & = & \sum_{l\geq 0} t^le_l(x_1,\ldots,x_n), \\
\prod_{i=1}^n \frac{1}{1+t x_i} & = & \sum_{l\geq 0} t^lh_l(x_1,\ldots,x_n).
\een }
To avoid cumbersome notations we put
\ben
\delta_{kN+i}:=\frac{g_{i/k}}{(N+i/k)! }\frac{\d}{\d y_{kN+i}} \quad 
\overline\delta_{mN+j}:=\frac{g_{j/m}}{(N+j/m)! }
\frac{\d}{\d \overline{y}_{mN+j}},
\een
where $N\geq 0,$ $1\leq i\leq k,$ $1\leq j\leq m,$ and for
a positive real number $\ga\notin\Z$ we put 
$\ga! = \{\ga\}(\{\ga\}+1)\ldots \ga$ where $\{\ga\}$ is 
the fractional part of $\ga.$ Note that the change of variables 
can be written as follows
\ben
\frac{\d}{\d q_n^{i/k}} = \sum_{N=0}^n \nu^{n-N}
h_{n-N}\(\frac{1}{i/k}, \frac{1}{i/k+1},\ldots, \frac{1}{i/k+N}\)\,
\delta_{kN+i}\ ,
\een 
\ben
\frac{\d}{\d \overline q_n^{j/m}} = \sum_{N=0}^n \bnu^{n-N}
h_{n-N}\(\frac{1}{j/m}, \frac{1}{j/m+1},\ldots, \frac{1}{j/m+N}\)\,
\overline\delta_{mN+j}\ .
\een 
Following an argument of E. Getzler (\cite{Ge}, Proposition A.1) we 
show that the above formulas can be inverted. Namely,
\begin{lemma}\label{inv}
 The following formulas hold
\ben
\delta_{kL+i} =\sum_{n=0}^L \nu^{L-n}
e_{L-n}\(\frac{1}{i/k}, \frac{1}{i/k+1},\ldots, \frac{1}{i/k+L-1}\)\, 
\frac{\d}{\d q_n^{i/k}},
\een
\ben
\overline\delta_{mL+j} =\sum_{n=0}^L \bnu^{L-n}
e_{L-n}\(\frac{1}{j/m}, \frac{1}{j/m+1},\ldots, \frac{1}{j/m+L-1}\)\, 
\frac{\d}{\d \overline q_n^{j/m}}.
\een
\end{lemma}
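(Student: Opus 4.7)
My plan is to verify the proposed inversion by direct substitution, reducing the problem to a single identity among complete and elementary symmetric polynomials that can be proved in one line by generating functions. Since the second formula in the lemma is obtained from the first by the involution $\nu\leftrightarrow\bnu$, $k\leftrightarrow m$, $i\leftrightarrow j$, it suffices to treat the $\delta_{kL+i}$ case.

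First I would substitute the original relation
\[
\frac{\d}{\d q_n^{i/k}} \;=\; \sum_{N=0}^n \nu^{n-N}\,h_{n-N}(x_0,\ldots,x_N)\,\delta_{kN+i},
\qquad x_l := \frac{1}{i/k+l},
\]
into the proposed RHS of the lemma and interchange the order of summation. After factoring out $\nu^{L-N}\,\delta_{kN+i}$, the statement reduces to the identity
\[
\sum_{n=N}^{L} e_{L-n}(x_0,\ldots,x_{L-1})\,h_{n-N}(x_0,\ldots,x_N) \;=\; \delta_{N,L}, \qquad 0\le N\le L.
\]

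The key step is to prove this identity via generating functions. Introduce
\[
E_L(t) = \prod_{l=0}^{L-1}(1+tx_l) = \sum_{a\ge 0} t^a\,e_a(x_0,\ldots,x_{L-1}),\qquad
H_N(t) = \prod_{l=0}^{N}\frac{1}{1+tx_l} = \sum_{b\ge 0} t^b\,h_b(x_0,\ldots,x_N),
\]
so that the sum in question is the coefficient of $t^{L-N}$ in $E_L(t)H_N(t)$. When $L>N$ the common factors cancel, giving
\[
E_L(t)H_N(t) \;=\; \prod_{l=N+1}^{L-1}(1+t x_l),
\]
which is a polynomial in $t$ of degree $L-1-N$, hence the coefficient of $t^{L-N}$ vanishes. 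When $L=N$ one gets $E_L(t)H_N(t) = (1+tx_L)^{-1}$, whose constant term is $1$. This establishes the identity and completes the proof.

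I do not anticipate a real obstacle: the proof is just triangular linear algebra together with the classical relation $E(t)H(t)=1$, recast so that the truncated versions $E_L$ and $H_N$ still collapse under cancellation. The only thing to track carefully is the sign convention adopted in the paper (namely $\prod(1+tx_i)^{-1}=\sum t^l h_l$, which differs from the usual $\prod(1-tx_i)^{-1}$ by signs), but this convention is precisely what makes $E(t)H(t)=1$ hold without an intervening sign and makes the cancellation argument above completely clean.
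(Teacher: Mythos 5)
Your proposal is correct and follows essentially the same route as the paper: substitute the defining relation for $\d/\d q_n^{i/k}$, reduce to the identity $\sum_{n=N}^{L} e_{L-n}h_{n-N}=\delta_N^L$ for the arguments $1/(i/k+l)$, and observe that after cancellation the generating product is a polynomial of degree $L-N-1$, so the coefficient of $t^{L-N}$ vanishes. The only cosmetic difference is that the paper uses $\nu$ itself as the generating variable instead of an auxiliary $t$, and your remark about the sign convention in the definition of $h_l$ matches the paper's setup.
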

\proof
We prove the first identity. The argument for the second one is similar.
We need to show that the following identity holds for any two integers 
$L\geq N:$ 
\ben
\sum_{n=N}^L \nu^{L-n}\nu^{n-N} 
e_{L-n}\(\frac{1}{i/k}, \frac{1}{i/k+1},\ldots, \frac{1}{i/k+L-1} \)\times \\
h_{n-N}\(\frac{1}{i/k}, \frac{1}{i/k+1},\ldots, \frac{1}{i/k+N}\) =\delta_N^L.
\een
If $L=N$ then the identity is obviously true. Assume that $L>N$. Then  
the LHS can be interpreted as the coefficient in front of $\nu^{L-N}$ in 
the product:
\ben
\prod_{a=0}^{L-1} (1+\nu/(i/k+a)) \ 
\prod_{a=0}^N \frac{1}{(1+\nu/(i/k+a))}.
\een
However, with respect to $\nu$, this is a polynomial of degree $L-N-1.$ 
\qed

The proof of \thref{t2} amounts to changing the variables in the vertex
operators $\Gamma^\pm$ and $\overline{\Gamma}^{\pm}.$ Let us begin
with $\Gamma^\pm$ and more precisely with the summands in \eqref{vop_0}
corresponding to $n> 0$ and $i=k-j$, $1\leq j\leq k-1.$ The 
coefficient in front of $\gl^{-nk-j}$ transforms as follows:
\ben
&&
\({\bf 1}_{j/k} (\nu+(j/k)z)\ldots(\nu+(j/k+ n-1)z)\)\sphat = \\
&&
=(n-1+j/k)! \sum_{l=0}^n \(z^l{\bf 1}_{j/k}\)\sphat 
\nu^{n-l}e_{n-l}\(\frac{1}{j/k}, \frac{1}{j/k+1},\ldots, \frac{1}{j/k+n-1}\) =\\
&&
=(n-1+j/k)!\,\ge\delta_{kn+j} = \frac{1}{kn+j}\,\ge \d_{y_{kn+j}},
\een   
We used that $\(z^l {\bf 1}_{j/k}\)\sphat = \ge\d/\d q_l^{j/k}$ and the first
identity in \leref{inv}. Similarly, one can verify that the above answer
is valid also for all pairs $n,i$ such that either $n>0$ and $i=k,$ or $n=0$ and 
$1\leq i\leq k-1.$ 

Let $y_{Nk+i} = \sum_{L\geq 0} a_{N,L} q_L^{i/k}$ be a linear change. 
Then by the chain rule: $\d_{q_L^{i/k}} = \sum_{N\geq 0} a_{N,L} \d_{y_{Nk+i}}$, i.e.,
\ben
\sum_{L\geq 0} (-w)^{-L-1} \d_{q_L^{i/k}} = \sum_{N\geq 0}
\(\sum_{L\geq 0} a_{N,L}(-w)^{-L-1} \) \d_{y_{Nk+i}}.
\een 
On the other hand, since our linear change is defined by \eqref{change_qy}, we get  
\ben
\sum_{L\geq 0} a_{N,L}(-w)^{-L-1} = \frac{g_{i/k}}{\prod_{l=0}^N (\nu -(l+i/k)w)}.
\een
Note that with respect to
the {\Poincare } pairing we have ${\bf 1}_{(k-i)/k} = g_{i/k} {\bf 1}^{i/k}.$
The term corresponding to $n=-N-1<0$ and $i,$ $1\leq i\leq k$, in the 
exponent of $\Gamma^+$ transforms as follows:
\ben
\(\frac{g_{i/k}}{\prod_{l=0}^N (\nu -(l+i/k)z)} {\bf 1}^{i/k}\)\sphat \gl^{Nk+i} = 
\sum_{L\geq 0} a_{N,L}\((-z)^{-L-1}{\bf 1}^{i/k} \)\sphat \gl^{Nk+i}. 
\een 
On the other hand, according to our quantization rules, 
$\((-z)^{-L-1}{\bf 1}^{i/k} \)\sphat= -\ge^{-1}q_L^{i/k}.$ Thus the term 
corresponding to $n=-N-1$ and $i$ is $-\ge^{-1}y_{Nk+i}\gl^{Nk+i}.$

Finally, the term corresponding to $n=0$ and $i=k$ is $\widehat{\bf 1}_{0/k}.$
Thus, in the new coordinates, the vertex operators $\Gamma^\pm$ are given by
\ben
\Gamma^{\pm} = \exp\(\mp\sum_{n=1}^\infty \gl^n y_n/\ge\)
\exp\(\pm \sum_{n=1}^\infty \frac{\gl^{-n}}{n}\, \ge\, \d_{y_n}\) 
e^{\pm\widehat{\bf 1}_{0/k}}. 
\een  
Similarly the other two vertex operators $\overline{\Gamma}^\pm$ are given by
\ben
\overline{\Gamma}^\pm = 
\exp\(\mp\sum_{n=1}^\infty \gl^n \overline y_n/\ge\)
\exp\(\pm \sum_{n=1}^\infty \frac{\gl^{-n}}{n}\, \ge\, \d_{\overline y_n}\) 
e^{\pm\widehat{\bf 1}_{0/m}}. 
\een  
Substitute these formulas in the HQE in \thref{t1} and note that by definition:
$\D_n = Q^{n^2/2} e^{n\,( \widehat{\bf 1}_{0/k}+\widehat{\bf 1}_{0/m})}
\D^{\rm Fr}.$  
After a short simplification and up to rescaling $y_n$ and 
$\overline y_n$ by $\ge^{-n}$ we get the HQE of the 2-Toda 
hierarchy (see appendix \ref{2toda}).


\sectionnew{Gromov-Witten theory of $\CC_{k,m}$}


\subsection{The system of quantum differential equations}

For some basics on orbifold Gromov-Witten theory we refer the 
reader to \cite{CR} and \cite{ab-gr-vi1, ab-gr-vi2}. We recall the vector space $H$ which by
definition coincides with the vector space of the equivariant 
cohomology algebra of the inertia orbifold $I\CC_{k,m}.$ For 
each $\tau\in H$ the orbifold quantum cup product $\bullet_\tau$
is a commutative associative multiplication in $H$ defined by
the following genus-0 Gromov-Witten invariants: 
\ben
({\bf 1}_\ga\bullet_\tau{\bf 1}_\gb,{\bf 1}_\gamma) = 
\sum_{l,d\geq 0} 
\frac{Q^d}{l!} \int_{[\overline{\M}_{0,l+3}(\CC_{k,m};d)]^{vir}}
{\rm ev}^*\({\bf 1}_\ga\tensor {\bf 1}_\gb \tensor {\bf 1}_\gamma \tensor
\tau^{\tensor l} \),
\een  
where ${\rm ev}$ is the evaluation map at the $l+3$ marked points. 
For brevity the RHS of the above equality will be denoted by 
the correlator $\<{\bf 1}_\ga,{\bf 1}_\gb,{\bf 1}_\gamma\>_{0,3}(\tau).$ We use
similar correlator notations for the other Gromov-Witten invariants as well. 

It is a basic fact in quantum cohomology theory that the 
following system of ordinary differential equations is compatible:
\ben
z\d_{\tau^\ga} \Phi = {\bf 1}_\ga\bullet_\tau\, \Phi,\quad
\ga\in \Z_k\sqcup\Z_m,
\een
where $\tau^\ga$ are the coordinates of $H$ with respect to the 
basis ${\bf 1}_\ga,$ and ${\bf 1}_\ga\bullet_\tau$ is the operator of
quantum multiplication by ${\bf 1}_\ga.$ This system is called the
system of Quantum Differential Equations (QDE) of the 
orbifold $\CC_{k,m}.$ 

If the parameter $z$ is close to $\infty$ then the following 
${\rm End}(H)$-valued series $S= 1+ S_1z^{-1}+\ldots $ provides a 
fundamental solution to the system of QDE:
\ben
(S_\tau\phi_\ga,\phi_\gb) = (\phi_\ga,\phi_\gb) + 
\sum_{k=0}^\infty \< \psi^k\phi_\ga,\phi_\gb\>_{0,2}(\tau)z^{-k-1}.
\een  
Our first goal is to explicitly compute $S_\tau$ for $\tau\in H^2(\CC_{k,m}).$

\subsection{The J-function}
The idea is to compute the so called $J$-function of $\CC_{k,m}$ defined 
by
\ben
J_{\CC_{k,m}}(\tau) = z\,{\bf 1} + \tau + \sum_{k=0}^\infty
\<{\bf 1}_\ga\psi^k\>_{0,1}(\tau) {\bf 1}^\ga z^{-k-1}. 
\een
In this Section we calculate the restrictions to $H^2(\CC_{k,m})$ of the $J$-function and its partial derivatives. Due to technical reasons we assume that $k,m$ are co-prime. However we conjecture that the main results, Proposition \ref{J}, Corollaries \ref{qde} and \ref{dJ} also hold in general. The general case will be addressed elsewhere using results of \cite{ccit} concerning toric Fano stacks.

Suppose that $k,m$ are co-prime. Then it is easy to see that $\CC_{k,m}$ is isomorphic to the weighted projective line $\proj (k,m)$, which is defined to be the stack quotient $[(\C^2- 0)/\C^*]$ under the following $\C^*$-action: $$\lambda\cdot (z_0,z_1)=(\lambda^{-k}z_0,\lambda^{-m}z_1).$$ 
It is important to note that the identification of the isotropy groups at stacky points are different. For $\proj(k,m)$, there is a natural map $\proj(k,m)\to \proj^1$ given by $[z_0;z_1]\mapsto [z_0^m;z_1^k]$. The neighborhood $\{z_0^m\neq 0\}\subset \proj^1$ of $0=[1;0]$ has the coordinate $z_1^k/z_0^m$, and the stack structure over $0$ is given by $z_1/z_0^{m/k}\mapsto z_1^k/z_0^m$ where $\Z_k$ acts by multiplication by $\exp(\frac{-2\pi\sqrt{-1}m}{k})$, while in case of $\CC_{k,m}$, $\Z_k$ acts by multiplication by $\exp(\frac{2\pi\sqrt{-1}}{k})$. The neighborhood $\{z_1^k\neq 0\}\subset \proj^1$ of $\infty=[0;1]$ has the coordinate $z_0^m/z_1^k$, and the stack structure over $\infty$ is given by $z_0/z_1^{k/m}\mapsto z_0^m/z_1^k$ where $\Z_m$ acts by multiplication by $\exp(\frac{-2\pi\sqrt{-1}k}{m})$, while in case of $\CC_{k,m}$, $\Z_m$ acts by multiplication by $\exp(\frac{2\pi\sqrt{-1}}{m})$.

The standard $T=(S^1)^2$ action on $\C^2$ descends to a $(S^1)^2$ action on $\proj (k,m)$. This gives a $(S^1)^2$-action on the line bundle $\O_{\proj (k,m)}(1)$. Let $\lambda_0/k$ and $\lambda_1/m$ be the weights of this action at $0$ and $\infty$ respectively.

\begin{definition}
For each real number $r$ we denote by $\{r\}\in(0,1]$ the unique real
number s.t. $r- \{r\}\in\Z.$ Note the range of $\{r\}$.
\end{definition}
 
\begin{proposition}\label{J}
The $T$-equivariant $J$-function of $X=\CC_{k,m}$ is given by the following formula
\begin{equation}\label{J-func}
\begin{split}
ze^{\tau\nu_0/z}\ 
\sum_{d\in \mathbb{Z}_{\geq 0}}
\frac{Q^{dm}e^{dm\tau}}{
d!z^d
\prod_{b=\{dm/k\}}^{dm/k}(\nu+bz) } {\bf 1}_{-dm/k}\  + \\
+
\ ze^{\tau\nu_1/z}\ 
\sum_{d\in\mathbb{Z}_{\geq 0}}
\frac{Q^{dk}e^{dkt}}{
\prod_{b=\{dk/m\}}^{dk/m}(\bnu +bz)
d!z^d} {\bf 1}_{-dk/m}.
\end{split}
\end{equation}
where  if $d=0$ then both fractions are by definition 
$1$ and in each product, $b$ varies over all rational numbers which have the
same fractional part as the corresponding upper (or lower) range of the product.
\end{proposition}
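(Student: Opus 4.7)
The plan is to compute the $J$-function by equivariant localization on $\overline{\M}_{0,1}(\proj(k,m),d)$, after using the isomorphism $\CC_{k,m}\cong\proj(k,m)$ (available because $k,m$ are coprime). First I would set up the identification of $\CC_{k,m}$ with $\proj(k,m)$ carefully, matching inertia components and $T$-characters. Because the isotropy representations at the stacky points differ (the generator of $\Z_k$ acts on the uniformizing coordinate by $e^{2\pi\sqrt{-1}/k}$ on the $\CC_{k,m}$ side but by $e^{-2\pi\sqrt{-1}m/k}$ on the $\proj(k,m)$ side, and similarly at $\infty$), the class ${\bf 1}_{i/k}$ of $\CC_{k,m}$ is identified with ${\bf 1}_{\{-im/k\}}$ of $\proj(k,m)$. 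This explains why the sector ${\bf 1}_{-dm/k}$ appears in the degree-$dm$ summand, and similarly for the second sum.

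Next I would invoke the divisor equation to reduce the computation to $\tau=0$. Since $\tau \in H^2(\CC_{k,m})$ is one-dimensional and the equivariant class $p$ restricts to $\nu_0$ at $0$ and $\nu_1$ at $\infty$, the divisor equation produces the prefactors $e^{\tau\nu_0/z}$ and $e^{\tau\nu_1/z}$ after the localization on the $0$-centered and $\infty$-centered fixed loci, and inserts the factor $e^{d\tau}$ at genuine degree-$d$ contributions. This reduces the problem to determining the equivariant small $J$-function term by term in $Q$.

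For each $d\geq 0$, I would apply $T$-equivariant Atiyah--Bott localization to the integral
\begin{equation*}
\sum_{k\geq 0}z^{-k-1}\<\psi^k{\bf 1}_\ga\>_{0,1,d}\ {\bf 1}^\ga.
\end{equation*}
The $T$-fixed loci parametrize stable maps whose image lies in $\{0,\infty\}$, and the stability condition together with the single marked point forces the fixed locus in positive degree to be a single ramified cover of $\proj^1$ by an orbifold football. The cover centered at $0$ has orbifold degree $dm/k$ and maps the marked point to $\infty$, landing it in the sector ${\bf 1}_{-dm/k}$; symmetrically at $\infty$. The substitution $d\mapsto dm$ (respectively $dm\mapsto d$) and $d\mapsto dk$ in the formula reflects the integrality constraint that the effective orbifold degree at the two stacky points must be a multiple of the orders of the isotropy groups.

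The main obstacle, and the core computation, is Step four: evaluating the $T$-equivariant Euler class of the virtual normal bundle at each fixed locus. Writing the cover as $\proj(1,k)\to\proj(k,m)$ and computing the moving part of $H^\bullet$ of the pulled-back tangent complex via orbifold Riemann--Roch, one obtains precisely the product
\begin{equation*}
\prod_{b=\{dm/k\}}^{dm/k}(\nu+bz),
\end{equation*}
where the range consists of rational numbers with the prescribed fractional part arising from the eigenspace decomposition of sections of the relevant line bundle on the football under the isotropy action at the stacky node. The factor $d!\, z^d$ in the denominator accounts for the automorphisms of the cover together with the $\psi$-class contribution at the one contracted/attached component. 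Combining this with the divisor prefactors, and adding the degree-zero term $z\,{\bf 1}$ (which reconstructs the diagonal $ze^{\tau\nu_0/z}{\bf 1}_{0/k}+ze^{\tau\nu_1/z}{\bf 1}_{0/m} = z\,{\bf 1}+\tau+O(z^{-1})$), yields exactly \eqref{J-func}.
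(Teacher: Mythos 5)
Your setup steps (the identification $\CC_{k,m}\cong\proj(k,m)$ with the sign-reversed matching of twisted sectors, and the use of the divisor equation to produce the prefactors $e^{\tau\nu_0/z}$, $e^{\tau\nu_1/z}$ and the factors $e^{dm\tau}$, $e^{dk\tau}$) are consistent with what is actually needed. The gap is in the localization step, which is precisely where the real content lies. It is not true that ``the stability condition together with the single marked point forces the fixed locus in positive degree to be a single ramified cover'': the $T$-fixed loci of $\overline{\M}_{0,1}(\proj(k,m),d)$ also include chains of multiple covers joined at nodes lying over the (possibly stacky) fixed points, as well as configurations with a contracted component carrying the marked point and several covers attached. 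Already for $\proj^1$ and degree $2$ there are three distinct fixed loci with $\ev_1\mapsto 0$ (the double cover, a node over $\infty$ joining two degree-one components, and a contracted component at $0$ carrying the marked point with two degree-one tails), and their contributions do not vanish. Consequently, computing the Euler class of the virtual normal bundle of the single ``football cover'' does not by itself yield the hypergeometric product $\prod_{b=\{dm/k\}}^{dm/k}(\nu+bz)$; one must either sum over all graph-type fixed loci or invoke the Givental-style characterization of the $J$-function by its localization recursion relations together with polynomiality. That argument is exactly the content of the mirror theorem for weighted projective spaces of \cite{cclt}, and the paper's own proof does not redo it: it cites the \cite{cclt} computation (observing it carries over to the $T$-equivariant setting) and then performs the bookkeeping translation of conventions (degrees measured by $\O^{\rm unif}(1)=\O_{\proj(k,m)}(km)$, the relations $c_1^T(\O_{\proj(k,m)}(k))=p/m$, $p|_0=\nu_0$, $p|_\infty=\nu_1$, and the sector relabelling ${\bf 1}_{i/k}\mapsto{\bf 1}_{-im/k}$, ${\bf 1}_{j/m}\mapsto{\bf 1}_{-kj/m}$).

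A smaller but telling inaccuracy: the factor $d!\,z^d$ is not an automorphism count of the cover (the automorphism group of a totally ramified degree-$a$ cover is cyclic of order $a$, never of order $d!$). In the hypergeometric formula it arises as $\prod_{b=1}^{d} bz$, i.e.\ as the specialization to the fixed point of the product over the ``tangent direction along the fiber of $\O_{\proj(k,m)}(k)$ (resp.\ $\O_{\proj(k,m)}(m)$)'' whose equivariant weight vanishes there, exactly as in the rewriting $\frac{1}{\prod_{1\leq b\leq B}(c_1^T(\O_{\proj(k,m)}(k))-\lambda_0+bz)}{\bf 1}_{i/k}=\frac{1}{B!z^B}{\bf 1}_{i/k}$ used in the paper. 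If you wish to keep a localization proof in the spirit of your proposal, you would need to either carry out the full sum over decorated graphs or set up the recursion/uniqueness argument; otherwise the efficient route is the one the paper takes, namely quoting the equivariant $J$-function of $\proj(k,m)$ from \cite{cclt} and translating notation.
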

\proof
The calculation of \cite{cclt}, which is easily seen to apply to the $T$-equivariant setting, yields the following formula for the $T$-equivariant $J$-function for $\proj(k,m)$:
{\allowdisplaybreaks
\begin{equation}\label{J_Pmk}
\begin{split}
&J_{\proj(k,m)}(t)\\
&=ze^{Pt/z}\left({\bf 1}_0\sum_{d\in \mathbb{Z}_{\geq 0}}\frac{Q^de^{dt}}{\prod_{b_0=1}^{dk}(c_1^T(\O_{\proj (k,m)}(k))-\lambda_0+b_0z)\prod_{b_1=1}^{dm}(c_1^T(\O_{\proj (k,m)}(m))-\lambda_1+b_1z)}\right.\\
&+\sum_{i=1}^{k-1}{\bf 1}_{i/k}\sum_{d\in \mathbb{Z}_{\geq 0}}\frac{Q^{d+i/k}e^{(d+i/k)t}}{\prod_{b_0=1}^{kd+i}(c_1^T(\O_{\proj (k,m)}(k))-\lambda_0+b_0z)\prod_{b_1=\{im/k\}}^{dm+im/k}(c_1^T(\O_{\proj (k,m)}(m))-\lambda_1+b_1z)}\\
&\left.+\sum_{j=1}^{m-1}{\bf 1}_{j/m}\sum_{d\in\mathbb{Z}_{\geq 0}}\frac{Q^{d+j/m}e^{(d+j/m)t}}{\prod_{b_0=\{jk/m\}}^{dk+jk/m}(c_1^T(\O_{\proj (k,m)}(k))-\lambda_0+b_0z)\prod_{b_1=1}^{dm+j}(c_1^T(\O_{\proj (k,m)}(m))-\lambda_1+b_1z)}\right).
\end{split}
\end{equation}}
Here $P=c_1^T(\O_{\proj (k,m)}(1))\in H^2(\proj(k,m))$ and $t$ is its coordinate. 
(\ref{J-func}) follows from (\ref{J_Pmk}) by incorporating the following changes:
\begin{itemize}
\item
In our notation, ${\bf 1}_0={\bf 1}_{0/k}+{\bf 1}_{0/m}$.
\item
We want to measure degrees of curve classes using 
$\O^{\text{unif}}(1)=\O_{\proj(k,m)}(km)$, where in \cite{cclt} 
$\O_{\proj (k,m)}(1)$ is used. As a consequence the degree of a 
curve class we want to use is $km$ times theirs. 
\item
We use the coordinate $\tau$ of the class $c_1^T(\O^{\text{unif}}(1))$ 
as the variable for $J_X$.
\item
We have the following equalities:
\begin{equation*}
\begin{split}
c_1^T(\O_{\proj (k,m)}(k))=p/m, &\quad c_1^T(\O_{\proj (k,m)}(m))=p/k;\\
p\cdot {\bf 1}_{i/k}= p|_0=m\lambda_0, &\quad p\cdot {\bf 1}_{j/m}=p|_\infty=k\lambda_1;\\
\nu_0=m\lambda_0, &\quad \nu_1=k\lambda_1.
\end{split}
\end{equation*}
Using this we rewrite 
\begin{equation*}
\begin{split}
&\frac{1}{\prod_{1\leq b\leq B}(c_1^T(\O_{\proj (k,m)}(k))-\lambda_0+bz)}{\bf 1}_{i/k}=
\frac{1}{B!z^B}{\bf 1}_{i/k},\\
&\frac{1}{\prod_{1\leq b\leq B}(c_1^T(\O_{\proj (k,m)}(k))-\lambda_0+bz)}{\bf 1}_{j/m}=
\frac{1}{\prod_{1\leq b\leq B}(\bnu+bz)}{\bf 1}_{j/m},\\
&\frac{1}{\prod_{1\leq b\leq B}(c_1^T(\O_{\proj (k,m)}(m))-\lambda_1+bz)}{\bf 1}_{j/m}=
\frac{1}{B!z^B}{\bf 1}_{j/m},\\
&\frac{1}{\prod_{1\leq b\leq B}(c_1^T(\O_{\proj (k,m)}(m))-\lambda_1+bz)}{\bf 1}_{i/k}=
\frac{1}{\prod_{1\leq b\leq B}(\nu+bz)}{\bf 1}_{i/k}.
\end{split}
\end{equation*}
\item
The difference in the identification of isotropy groups discussed above 
imposes the changes ${\bf 1}_{i/k}\mapsto {\bf 1}_{-im/k}$ and 
${\bf 1}_{j/m}\mapsto {\bf 1}_{-kj/m}$.
\end{itemize}
\qed

A direct calculation gives the following
\begin{corollary}\label{qde}
The $J$-function $J_X$ satisfies the following differential equation:
\begin{equation}
\prod_{i=0}^{k-1}
\left(\frac{z}{m}\frac{\partial}{\partial\tau}-
\frac{\nu_0}{m}-iz\right)
\prod_{j=0}^{m-1}
\left(\frac{z}{k}\frac{\partial}{\partial \tau}-
\frac{\nu_1}{k}-jz\right) J_X=
Q^{km}e^{km\tau}J_X
\end{equation}
\end{corollary}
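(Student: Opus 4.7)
The plan is a direct calculation. I would apply the differential operator termwise to each of the two series in \eqref{J-func} and match the result with $Q^{km}e^{km\tau}J_X$ by an index shift. The key simplification is to conjugate the operator by the exponential prefactors. Since $\frac{z}{m}\partial_\tau$ sends $e^{\tau\nu_0/z}$ to $\frac{\nu_0}{m}e^{\tau\nu_0/z}$, the factor $\frac{z}{m}\partial_\tau - \frac{\nu_0}{m} - iz$ conjugates to $\frac{z}{m}\partial_\tau - iz$, and $\frac{z}{k}\partial_\tau - \frac{\nu_1}{k} - jz$ conjugates to $\frac{z}{k}\partial_\tau + \nu - jz$, using $\nu = (\nu_0-\nu_1)/k$. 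Symmetrically, conjugation by $e^{\tau\nu_1/z}$ yields $\frac{z}{m}\partial_\tau + \bnu - iz$ and $\frac{z}{k}\partial_\tau - jz$.

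After conjugation, each factor acts on $e^{dm\tau}$ as a scalar multiplier: the $i$-product contributes $z^k\,d(d-1)\cdots(d-k+1)$, and the $j$-product contributes $\prod_{j=0}^{m-1}\bigl(\nu+(dm/k-j)z\bigr) = \prod_{b = dm/k - m + 1}^{dm/k}(\nu+bz)$. Terms of the first series with $d<k$ vanish because of the falling factorial. For $d\ge k$, combining $d(d-1)\cdots(d-k+1)/d! = 1/(d-k)!$, the cancellation $z^k/z^d = 1/z^{d-k}$, and the observation that the $j$-product precisely makes up the difference between $\prod_{b=\{dm/k\}}^{dm/k}(\nu+bz)$ and $\prod_{b=\{dm/k\}}^{dm/k-m}(\nu+bz)$, shows that the transformed $d$-th summand equals $Q^{km}e^{km\tau}$ times the $(d-k)$-th summand of the first series. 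The relations ${\bf 1}_{-dm/k} = {\bf 1}_{-(d-k)m/k}$ in $\Z_k$ and $\{(d-k)m/k\} = \{dm/k\}$ make this identification exact. The symmetric computation on the second series (with $k\leftrightarrow m$, $\nu\leftrightarrow\bnu$, $\nu_0\leftrightarrow\nu_1$) gives the analogous equality, and summing yields the claim.

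I do not anticipate any serious obstacle; the proof is essentially bookkeeping of products indexed by fractional parts of $dm/k$ and $dk/m$. The only point that needs explicit verification is that the shifts $d\mapsto d+k$ and $d\mapsto d+m$ preserve these fractional parts, which is immediate since $km/k = m\in\Z$ and $km/m = k\in\Z$. The co-primality of $k$ and $m$ plays no role in this step; it enters the overall argument only through Proposition \ref{J}, i.e., the formula for $J_X$ itself.
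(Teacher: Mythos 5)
Your proposal is correct and coincides with the paper's intended argument: the paper derives Corollary \ref{qde} from the explicit formula \eqref{J-func} by exactly this kind of direct termwise calculation (conjugating out the exponential prefactors, noting that the falling-factorial factor kills the low-degree terms, and matching the remaining products of $(\nu+bz)$, resp.\ $(\bnu+bz)$, after the index shift $d\mapsto d-k$, resp.\ $d\mapsto d-m$). Your bookkeeping with the convention $\{r\}\in(0,1]$ and the identifications ${\bf 1}_{-dm/k}={\bf 1}_{-(d-k)m/k}$ is accurate, so nothing further is needed.
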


\begin{corollary}\label{dJ}
The restrictions of the partial derivatives of $J_X$ to the small
parameter space are given as follows:
{\allowdisplaybreaks
\begin{equation}\label{dJ_ik}
\begin{split}
(kg_{i/k})^{-1}z\d_{i/k}J_X &= 
ze^{\tau\nu_0/z}\ 
\sum_{d\in \mathbb{Z}_{\geq 0}}
\frac{Q^{dm}e^{dm\tau}}{
d!z^d}
\frac{\prod_{b<\{(dm-i)/k\}} (\nu+bz) } 
{
\prod_{b\leq (dm-i)/k}(\nu+bz) } 
{\bf 1}_{(-dm+i)/k}\   \\
&+
\ ze^{\tau\nu_1/z}\ 
\sum_{d\in\mathbb{Z}_{\geq 0}}
\frac{Q^{dk+i}e^{(dk+i)t}}{
\prod_{b=\{(dk+i)/m\}}^{(dk+i)/m}(\bnu +bz)
d!z^d} {\bf 1}_{-(dk+i)/m},\quad 1\leq i\leq k; 
\end{split}
\end{equation}}
{\allowdisplaybreaks
\begin{equation}\label{dJ_jm}
\begin{split}
(mg_{j/m})^{-1}z\d_{j/m}J_X  &= 
ze^{\tau\nu_0/z}\ 
\sum_{d\in \mathbb{Z}_{\geq 0}}
\frac{Q^{dm+j}e^{(dm+j)\tau}}{
d!z^d
\prod_{b=\{(dm+j)/k\}}^{(dm+j)/k}(\nu+bz) } 
{\bf 1}_{-(dm+j)/k}\  \\
&+
\ ze^{\tau\nu_1/z}\ 
\sum_{d\in\mathbb{Z}_{\geq 0}}
\frac{Q^{dk}e^{(dk)t}}
{d!z^d}
\frac{\prod_{b<   \{(dk-j)/m\}}(\bnu +bz) } 
     {\prod_{b\leq {(dk-j)/m }}(\bnu +bz)} 
{\bf 1}_{(-dk+j)/m},\quad 1\leq j\leq m,
\end{split}
\end{equation}}
where the notations and the conventions are the same as above.
\end{corollary}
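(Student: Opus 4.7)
The strategy is to extend the calculation of Proposition \ref{J} from the small parameter space $H^2\subset H$ to the big $T$-equivariant $J$-function of $\proj(k,m)$, and then differentiate once in a twisted direction and restrict. Concretely, the mirror theorem of \cite{cclt}, applied in the $T$-equivariant setting, yields a closed-form formula for the big $J$-function $J^{\text{big}}_{\proj(k,m)}(t,\{s_{i/k}\},\{s_{j/m}\},z)$ as a multi-variable hypergeometric series over pairs $(d,\vec n)$, where $d\in\Z_{\geq 0}$ is the degree and $\vec n$ records the nonnegative multiplicities of the twisted variables $s_{i/k},s_{j/m}$. Setting all $s_*=0$ selects $\vec n=0$ and recovers formula (\ref{J_Pmk}) used in the proof of Proposition \ref{J}.

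To obtain the left-hand side of (\ref{dJ_ik}), I would differentiate $J^{\text{big}}_{\proj(k,m)}$ with respect to $s_{i/k}$ and then set all twisted variables to zero, so that only summands with exactly one insertion of ${\bf 1}_{i/k}$ survive. The effect on the hypergeometric kernel is to shift the upper limit of one of the Pochhammer-like products by the integer $i$ and to shift the sector index of the inserted class accordingly, producing the characteristic ratio $\prod_{b<\{(dm-i)/k\}}/\prod_{b\leq(dm-i)/k}$ appearing in (\ref{dJ_ik}). Translating back to the $\CC_{k,m}$ conventions via the sector relabellings ${\bf 1}_{i/k}\mapsto{\bf 1}_{-im/k}$ and ${\bf 1}_{j/m}\mapsto{\bf 1}_{-jk/m}$ and the identifications $\nu_0=m\lambda_0$, $\nu_1=k\lambda_1$, $\nu=(\nu_0-\nu_1)/k$, $\bnu=(\nu_1-\nu_0)/m$, and absorbing the \Poincare-pairing normalization $g_{i/k}$ into the overall prefactor $(kg_{i/k})^{-1}$, then produces (\ref{dJ_ik}). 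The derivation of (\ref{dJ_jm}) is entirely parallel, with the roles of the two stacky points interchanged.

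The main obstacle is bookkeeping rather than any one deep computation: one must carefully match the index-shift conventions between $\CC_{k,m}$ and $\proj(k,m)$, track the Pochhammer-like products whose upper and lower limits share a common fractional part, and verify that the normalization factors $g_{i/k},g_{j/m}$ are consistent with the choice of dual basis used in the hypergeometric formula from \cite{cclt}. A convenient organizational device is to rewrite each term of the big $J$-function as the corresponding term of (\ref{J_Pmk}) multiplied by one extra Pochhammer factor for each twisted insertion; differentiation and specialization then reduce the proof of (\ref{dJ_ik}) and (\ref{dJ_jm}) to a term-by-term comparison with the shifted summands on the right-hand side.
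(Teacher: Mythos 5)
Your proposal hinges on an input that neither the paper nor its cited references provide: a closed hypergeometric formula for the \emph{big} $T$-equivariant $J$-function of $\proj(k,m)$ with twisted-sector variables $s_{i/k},s_{j/m}$. The mirror theorem of \cite{cclt} used here only gives the $J$-function restricted to the small parameter space $H^2$ --- that is exactly the content of Proposition \ref{J} --- so the step ``differentiate $J^{\text{big}}$ in $s_{i/k}$ and set all twisted variables to zero'' cannot be carried out from the available data. The derivative $\partial_{i/k}J_X$ restricted to $H^2$ encodes two-point descendant invariants with one twisted insertion at a shifted point $\tau\in H^2$, and these are genuinely new invariants not determined term-by-term by the small $I$-function; a big $I$-function for toric stacks (and the attendant mirror-map issue relating big $I$ to big $J$, which you do not address) only appears in the later work referred to as \cite{ccit}, ``in preparation,'' which the authors explicitly reserve for the non-coprime case. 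A telling symptom is that your argument never uses coprimality of $k$ and $m$, whereas the paper stresses that expressing the twisted derivatives is ``possible only when $k$ and $m$ are co-prime.''

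The paper's actual proof takes a different and self-contained route: since for $\gcd(k,m)=1$ the equivariant quantum cohomology is generated by the degree-$2$ class, one can reach every twisted direction by iterated application of first-order operators in $z\partial_\tau$ alone. Concretely, Appendix \ref{comb_setup} orders the sectors into a sequence $s_\alpha$, builds the operators $\delta_\alpha$ and $D_\alpha=Q^{-mks_\alpha}e^{-mks_\alpha\tau}\prod_{\gamma<\alpha}\delta_\gamma$, and proves by induction on $\alpha$ (splitting into three combinatorial cases according to how $s_\alpha,s_{\alpha+1}$ interleave) that $D_\alpha J_X=z\partial_{\stilde_\alpha}J_X$, by applying $\delta_\alpha$ directly to the series \eqref{J-func} and identifying the summand with the highest power of $z$. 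The explicit expressions \eqref{dJ_ik} and \eqref{dJ_jm} then fall out as by-products of these manipulations. If you want to salvage your approach you would first have to prove an equivariant big mirror theorem for $\proj(k,m)$ (including the identification of the mirror map to first order in the twisted variables), which is a substantially harder statement than the corollary itself.
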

The idea of the proof, borrowed from Section 5 of \cite{cclt}, is 
to express 
the partial derivatives of the $J$-function as linear combinations 
of derivatives along $H^2(\CC_{k,m})$. This is possible only when $k$ and $m$ 
are co-prime. The computation is straightforward but a bit
technical. It will be given in Appendix \ref{comb_setup}.

\subsection{Equivariant quantum cohomology of $\CC_{k,m}$ }

Put $N:=k+m.$ 
Let $k$ and $m$ be co-prime numbers. Then as it was explained above, 
$\CC_{k,m}$ is isomorphic as an orbifold to the weighted projective line.
We recall \cite{cclt} Corollary 1.2. The proof of this corollary (see 
\cite{cclt} section 5) generalizes to equivariant settings and we get 
the following description of the equivariant quantum cup product of 
$\CC_{k,m}$ at a point $\tau=t_N p,\ t_N\in \C.$ The map
\ben
&
{\bf 1}_{i/k}\mapsto \phi_{i/k}:= x^i,\quad 1\leq i\leq k-1, & 
{\bf 1}_{0/k}\mapsto \phi_{0/k}:= k x^k/(\nu_0-\nu_1), \\
& {\bf 1}_{j/m}\mapsto \phi_{j/m}:=y^j,\quad 1\leq j\leq m-1, &
{\bf 1}_{0/m}\mapsto \phi_{0/m}:= my^m/(\nu_1-\nu_0), 
\een
identifies the algebra $(H,\bullet_\tau),$ with the algebra 
\ben
\C[x,x^{-1}]/\< \d_x f \>, 
\een
where $f=x^k+(Qe^{t_N}/x)^m + \nu_1\log x + \nu_0 \log (Qe^{t_N}/x).$ 

The description of small orbifold quantum cohomology of $\CC_{k,m}$ is in fact valid without assuming that $k,m$ are co-prime. This may be seen as follows. Put $x_i={\bf 1}_{i/k}, y_j={\bf 1}_{j/m}$. Then in equivariant orbifold cohomology it is easy to see that 
\begin{equation*}
\begin{split}
x_i\cdot y_j=0, &\quad i,j\neq 0;\\
x_{i_1}\cdot x_{i_2}=x_{i_1+i_2}, &\quad i_1+i_2\leq k-1;\\
y_{j_1}\cdot y_{j_2}=y_{j_1+j_2}, &\quad j_1+j_2\leq m-1.
\end{split}
\end{equation*}
Also,
\begin{equation*}
\begin{split}
x_{k-1}\cdot x&=\langle x_{k-1},x,1\rangle_{0,3,0}P.D.(1)+\langle x_{k-1},x,p\rangle_{0,3,0}P.D.(p)\\
&=p/k+\nu_0/k
\end{split}
\end{equation*}
since $\langle x_{k-1},x,1\rangle_{0,3,0}=1/k, \langle x_{k-1},x,p\rangle_{0,3,0}=\int_{B\mathbb{Z}_k}p=\nu_0/k$, and $P.D.(1)=p, P.D.(p)=1$. Similarly we have $y_{m-1}\cdot y=p/m+\nu_1/m$. So the equivariant orbifold cohomology algebra can be identified with $$\C [\nu_0,\nu_1][x,y]/(kx^k-\nu_0=my^m-\nu_1, xy=0),$$ where $x:=x_1, y:=y_1$.

To calculate the small equivariant orbifold quantum cohomology we only need to find the correct deformations of the two relations $kx^k-\nu_0=my^m-\nu_1, xy=0$. We will use the fact that the small equivariant orbifold quantum cohomology algebra is {\em graded} as a $\C$-algebra, with $\text{deg}\,x=1/k, \text{deg}\,y=1/m, \text{deg}\,\nu_0=\text{deg}\,\nu_1=1, \text{deg}\, q=1/k+1/m$. By degree reason it is easy to see that the relation $xy=0$ is deformed to $xy=q$. The relation $kx^k-\nu_0=my^m-\nu_1$ remains undeformed. This can be seen in the same way as its non-equivariant counterpart treated in \cite{MT}, Section 4.3. Thus the small equivariant orbifold quantum cohomology of $\CC_{k,m}$ is isomorphic to $$\C[[q]][\nu_0,\nu_1][x,y]/(kx^k-\nu_0=my^m-\nu_1, xy=q).$$
Relationship between small quantum cohomology and big quantum cohomology restricted to $H^2$ imposes the change of variable $q=Qe^{t_N}$. This yields the description above.

We conjecture that the full equivariant quantum cohomology can be described in a similar
way. Namely, let $M$ be the family of functions on the complex circle $\C^*$
of the type:
\ben
f_t = x^k +\sum_{i=1}^k t_i x^{k-i} + 
\sum_{j=1}^{m-1} t_{k+j}\Big(\frac{Qe^{t_N}}{x}\Big)^j +\Big(\frac{Qe^{t_N}}{x}\Big)^m+
\nu_1\log x + \nu_0 \log\(\frac{Qe^{t_N}}{x}\).  
\een
Each tangent space of $M$ is equipped with an algebra structure via
the map:
\ben
T_tM \iso \C[x,x^{-1}]/\< \d_x f_t \> ,\quad \d/\d t_i\mapsto \[\d f_t/\d t_i\],\ 
1\leq i\leq N.
\een
Let $\omega:=dx/x$ be the standard volume form on $\C^*$. We equip each tangent
space $T_tM$ with a residue metric:
\ben
([\phi_1],[\phi_2])_t =-(\res_{x=0}+\res_{x=\infty}) 
\frac{\phi_1\omega\ \phi_2\omega }{df_t}.
\een
We claim that this is a flat metric on $M$ and we prove it by constructing
explicitly a coordinate system on $M$ such that the metric is constant.
If $x$ is close to $\infty$ then the equation 
\beq\label{change_infty}
f_t(x) = \gl^k + \nu_1\log \gl + \nu_0\log \( Q/\gl\)
\eeq
admits a unique solution of the type $x=\gl + a_0(t) + a_1(t)\gl^{-1}+\ldots$,
i.e., the equation determines a coordinate change near $x=\infty$ and 
we have the following expansion
\ben
\log x = \log \gl - \frac{1}{k} \Big( \sum_{i=1}^k \tau^{i/k} \gl^{-i} \Big) +
O(\gl^{-k-1}),
\een
where $\tau^{i/k}$ are polynomials in $t=(t_1,t_2,\ldots,t_N).$ More precisely,
by using 
\beq\label{change_explicit}
(i/k)\tau^{i/k} = -\res_{x=\infty} \gl^i \omega ,\ 1\leq i\leq k,
\eeq
we get 
\ben
\tau^{1/k}& =& t_1,\\
\tau^{i/k}& =& t_i + f_{i/k}(t_1,\ldots,t_{i-1}),\quad 2\leq i\leq k-1, \\
\tau^{0/k}& =& t_k + \nu_0 t_N,
\een
where $f_{i/k}$ are polynomials in $t_1,\ldots,t_{i-1}$ of degrees $\geq 2$. 
They can be computed explicitly by taking the coefficient in front of $x^{-i}$ 
in the following Laurent polynomial:
\ben
\frac{1}{i/k}\,
\sum_{n=2}^{i}{i/k \choose n} 
\Big( \frac{t_1}{x}+\ldots + \frac{t_{i-1}}{x^{i-1}} \Big)^n.  
\een
This formula is obtained from formula \eqref{change_explicit} by truncating 
the terms in the change \eqref{change_infty} that do not contribute to the 
residue in \eqref{change_explicit}. 

The rest of the flat coordinates can be constructed in a similar way. 
Let $y=Qe^{t_N}/x$ be another coordinate on the complex circle. Then 
each $f_t\in M$ assumes the form:
\ben
y^m + \sum_{j=1}^{m} t_{k+m-j} y^{m-j} + 
\sum_{i=1}^{k-1} t_{k-i}\Big(\frac{Qe^{t_N}}{y}\Big)^{i} +\Big(\frac{Qe^{t_N}}{y}\Big)^k+
\nu_0\log y + \nu_1 \log\(\frac{Qe^{t_N}}{y}\).
\een 
If $y$ is close to $\infty$ then the equation 
\beq\label{change_0}
f_t(y) = \gl^m + \nu_0\log \gl + \nu_1\log \( Q/\gl\)
\eeq
determines a new coordinate near $y=\infty$ and we have the following
expansion:
\ben
\log y = \log \gl - \frac{1}{m} \Big( \sum_{j=1}^m \tau^{j/m} \gl^{-j} \Big) +
O(\gl^{-m-1}),
\een
where $\tau^{j/m}$ are polynomials in $t=(t_1,t_2,\ldots,t_N).$ The same 
arguments as above yield the following: 
\ben
\tau^{1/m}& =& t_{k+m-1},\\
\tau^{j/m}& =& t_{k+m-j} + f_{j/m}(t_{k+m-1},\ldots,t_{k+m-(j-1)}),\quad 2\leq j\leq m-1, \\
\tau^{0/m}& =& t_k + \nu_1 t_N,
\een
where $f_{j/m}$ are polynomials of degrees at least $2$ and can be computed
explicitly by taking the coefficient in front of $y^{-j}$ of the following 
Laurent polynomial:
\ben
\frac{1}{j/m}\,
\sum_{n=2}^{j}{j/m \choose n} 
\Big( \frac{t_{k+m-1}}{y}+\ldots + \frac{t_{k+m-(j-1)}}{y^{j-1}} \Big)^n.  
\een
\begin{lemma}\label{res_poincare_pairing}
In the coordinate system $\{\tau^\ga\}_{\ga\in \Z_k\sqcup\Z_m}$, the residue
pairing coincides with the {\Poincare} pairing. More precisely:
\ben
(\d/\d \tau^\ga,\d/\d \tau^\gb) = ({\bf 1}_\ga, {\bf 1}_\gb).
\een
\end{lemma}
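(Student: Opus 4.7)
My plan is to compute the residue pairing of the tangent vectors $\phi_\alpha:=\partial_{\tau^\alpha}f_t$ directly, using the two local coordinates $\lambda$ (near $x=\infty$) and $\mu$ (near $x=0$, through $y=Qe^{t_N}/x$) in which $f_t$ becomes the simple models $\lambda^k+(\nu_1-\nu_0)\log\lambda+\text{const}$ and $\mu^m+(\nu_0-\nu_1)\log\mu+\text{const}$. The crucial formula, derived by the chain rule from $\phi_\alpha=F'_\infty(\lambda)\,\partial_{\tau^\alpha}\lambda|_x$ together with $\omega=(\partial_\lambda\log x)\,d\lambda$, is
\[
\phi_\alpha\phi_\beta\,\omega^2/df_t \;=\; F'_\infty(\lambda)\,(\partial_{\tau^\alpha}G|_\lambda)(\partial_{\tau^\beta}G|_\lambda)\,d\lambda,\qquad F'_\infty(\lambda):=k\lambda^{k-1}+(\nu_1-\nu_0)/\lambda,
\]
where $G:=\log x-\log\lambda$. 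From the defining expansion one reads off $\partial_{\tau^{i/k}}G|_\lambda=-\lambda^{-i}/k+O(\lambda^{-k-1})$ for $1\leq i\leq k$, while $\partial_{\tau^{j/m}}G|_\lambda=O(\lambda^{-k-1})$ because the first $k$ coefficients of $G$ are pure $\tau^{i/k}$-polynomials. An analogous formula with $F'_0(\mu):=m\mu^{m-1}+(\nu_0-\nu_1)/\mu$ and $H:=\log y-\log\mu$ holds near $\mu=\infty$.

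First I will treat the ``twisted sector'' pairings. For $\alpha=i/k,\beta=i'/k$ with $1\leq i,i'\leq k-1$, extracting the $\lambda^{-1}$-coefficient of $F'_\infty\cdot(\partial_{\tau^{i/k}}G)(\partial_{\tau^{i'/k}}G)$ picks out $1/k$ exactly when $i+i'=k$ (from the leading product $k\lambda^{k-1}\cdot\lambda^{-i-i'}/k^2$) and is zero otherwise, since all correction terms are of too high order in $\lambda^{-1}$. At $x=0$ the function $\phi_{i/k}|_x$ is a polynomial in $x$ of order $\geq x^{k-i}\geq x$, so $\phi_{i/k}\phi_{i'/k}/(x^2f'_t(x))$ is regular at $x=0$ and its residue there vanishes. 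This gives $([\phi_{i/k}],[\phi_{i'/k}])=\delta_{i+i',k}/k$, matching the Poincar\'e pairing. The mirror argument handles $\alpha,\beta\in\{1/m,\ldots,(m-1)/m\}$, and the mixed case $\alpha=i/k,\beta=j/m$ gives zero at both boundary points by the above $O$-estimates.

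The classes $\alpha=0/k,0/m$ require more care, as $\phi_{0/k}|_x$ blows up as $x^{-m}$ at $x=0$. A direct computation using the upper-triangular change of coordinates yields the algebraic identities $(\nu_0-\nu_1)\phi_{0/k}|_x = P_+(x)-xf'_t(x)$ with $P_+(x):=kx^k+\sum_{i=1}^{k-1}(k-i)t_ix^{k-i}$, and equivalently (in $y=A/x$, $A:=Qe^{t_N}$) $(\nu_0-\nu_1)\phi_{0/k}=yf'_t(y)-P_{-k}(y^{-1})$ where $P_{-k}(y^{-1}):=-\sum_{i=1}^k it_{k-i}A^iy^{-i}$ and $t_0:=1$. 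The second identity yields the clean simplification
\[
\phi_{0/k}^2\,\frac{dy}{y^2f'_t(y)} = \frac{1}{(\nu_0-\nu_1)^2}\left[f'_t(y)\,dy - \frac{2P_{-k}(y^{-1})}{y}\,dy + \frac{P_{-k}(y^{-1})^2\,dy}{y^2f'_t(y)}\right],
\]
in which only the first summand contributes to the residue at $y=\infty$ (the second is a polynomial in $y^{-1}$ with no $y^{-1}$-term; the third decays at least as $y^{-m-2}$), giving $\res_{y=\infty}(f'_t\,dy)/(\nu_0-\nu_1)^2 = -1/(\nu_0-\nu_1)$. Combined with the vanishing of $\res_{x=\infty}$ (the $i=i'=k$ case of the earlier $\lambda$-argument), this produces $([\phi_{0/k}],[\phi_{0/k}])=1/(\nu_0-\nu_1)$. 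The pairing $([\phi_{0/m}],[\phi_{0/m}])$ follows by the $\nu_0\leftrightarrow\nu_1,\,k\leftrightarrow m$ symmetry, and for the cross pairing I will use the Jacobian identity $\phi_{0/k}+\phi_{0/m}\equiv 1$ (immediate from the $x$-formula) to reduce $([\phi_{0/k}],[\phi_{0/m}])$ to $(\phi_{0/k},1)-([\phi_{0/k}],[\phi_{0/k}])$, where an analogous two-term computation gives $(\phi_{0/k},1)=1/(\nu_0-\nu_1)$, so the cross pairing vanishes.

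The main technical obstacle is that the higher-order coefficients in the expansions of $G$ and $H$ are not explicit in the $\tau$-variables and may depend a priori on all of them. The key observation, used throughout, is that these unknown dependencies always enter at orders too small to contribute to the coefficient of $\lambda^{-1}$ or $\mu^{-1}$ in the residue integrand; and for the delicate ``large'' classes, the closed-form identities $(\nu_0-\nu_1)\phi_{0/k}=P_+(x)-xf'_t(x)=yf'_t(y)-P_{-k}(y^{-1})$ eliminate the need for any series expansion.
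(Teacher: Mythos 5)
Your proposal is correct, and at the $x=\infty$ end it is the paper's own computation in thin disguise: your identity $\phi_\alpha\phi_\beta\,\omega^2/df_t=F'_\infty(\lambda)\,(\partial_{\tau^\alpha}G)(\partial_{\tau^\beta}G)\,d\lambda$ is exactly the paper's step $(\partial_{\tau^\alpha}f_t)\,\omega=-(\partial_{\tau^\alpha}\log x)\,df_t$ followed by extraction of the $\lambda^{-1}$-coefficient, giving $\delta_{i+i',k}/k$. Where you genuinely diverge is at $x=0$: the paper changes there to the second flat coordinate and carefully keeps the $t$-dependence of $y=Qe^{t_N}/x$, so its integrand carries the extra constant $\partial t_N/\partial\tau^\alpha$, which is the sole source of the $1/(\nu_0-\nu_1)$; you instead use regularity of $\phi_{i/k}$ at $x=0$ for the twisted classes and the closed-form identities $(\nu_0-\nu_1)\phi_{0/k}=P_+(x)-x f_t'(x)=y f_t'(y)-P_{-k}(y^{-1})$ together with $\phi_{0/k}+\phi_{0/m}=1$ for the unit-type classes. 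I checked these identities and the resulting residues (the diagonal value $1/(\nu_0-\nu_1)$ and the vanishing cross term via $(\phi_{0/k},1)=1/(\nu_0-\nu_1)$), and they are correct. Your route buys explicitness: at $x=0$ everything is a Laurent polynomial over $y^2 f_t'(y)$ and no formal series expansion is needed; the paper's route buys uniformity, since one formula treats all classes simultaneously, the term $\partial t_N/\partial\tau^\alpha$ automatically separating $0/k,0/m$ from the twisted sectors.

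Two caveats you should repair when writing this up. First, your ``analogous formula'' near $\mu=\infty$ is, as literally stated, valid only for classes with $\partial t_N/\partial\tau^\alpha=0$: the quantity that actually enters is $\partial_{\tau^\alpha}\log x|_\mu=\partial t_N/\partial\tau^\alpha-\partial_{\tau^\alpha}H|_\mu$ (this is precisely the ``extra caution'' the paper flags). You never apply the formula to $0/k$ or $0/m$, so no error results, but state it with that term. Second, your case list omits the pairings of $\phi_{0/k},\phi_{0/m}$ against the twisted classes $\phi_{i/k}$, $\phi_{j/m}$ (note the paper's computed block $1\leq i,i'\leq k$ does include $(0/k,i/k)$); these vanish immediately from your identity $(\nu_0-\nu_1)\phi_{0/k}=y f_t'(y)-P_{-k}(y^{-1})$ combined with the order estimates at both ends, but they need to be listed. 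Finally, a small slip: $\phi_{i/k}$ for $1\leq i\leq k-1$ is a combination of the monomials $x,\dots,x^{k-i}$, so its vanishing order at $x=0$ is $\geq 1$ (not $\geq k-i$); that weaker bound is all your argument uses.
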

\proof
We prove the equality only when $\ga=i/k,\gb=i'/k, 1\leq i,i'\leq k.$
The other cases may be treated in a similar way. Let us compute the residue at
$x=\infty$ in the residue pairing. We change from $x$ to the coordinate
$\gl$ defined by equation \eqref{change_infty}. Differentiation by parts 
yields  $\d_{\tau^\ga}f_t + f_t'\,(\d_{\tau^\ga}x) = 0.$ Therefore
\ben
\d_{\tau^\ga}f_t \omega =-\(\d_{\tau^\ga}\log x\)\, df_t = 
k^{-1}\( \gl^{-i} + O(\gl^{-k-1})\)
\(k\gl^k + \nu_1-\nu_0\)\frac{d\gl}{\gl}.
\een
Now, the $\(-\res_{x=\infty}\)$-term in the residue pairing 
of $(\d/\d \tau^\ga,\d/\d \tau^\gb)$ equals to 
\ben 
-\res_{\gl=\infty}k^{-2}\(k\gl^{k-i} + (\nu_1-\nu_0)\gl^{-i} + O(\gl^{-k-1})\)
\(\gl^{-i'} +O(\gl^{-k-1})  \) \frac{d\gl}{\gl}.
\een
The last residue equals $1/k$ if $i+i'=k$ and $0$ otherwise. To compute 
the $\(-\res_{x=0}\)$-term in the residue pairing, we switch to the 
coordinate $y=Qe^{t_N}/x$ and then, in a neighborhood of $y=\infty$, we
change to the coordinate $\gl$ defined by equation \eqref{change_0}. 
An extra caution is required here since the 1-form in the residue
involves partial derivatives in $\tau^\ga$ and $\tau^\gb$ and the coordinate change 
depends on $t.$ Put $\widetilde{f}_t=f_t(Qe^{t_N}/y).$ Then
differentiation by parts yields
\ben
\(\d_{\tau^\ga} f_t\)\, \omega  = \Big(-\frac{\d\widetilde{f}_t}{\d{\tau^\ga}}
\frac{dy}{y} + \frac{\d t_N}{\d \tau^\ga} d\widetilde{f}_t\Big) = 
\Big( \d_{\tau^\ga}\log y + \frac{\d t_N}{\d \tau^\ga}\Big) d\widetilde{f}_t. 
\een  
In the last formula if we change from $y$ to $\gl$ then we get
\ben
\(\d_{\tau^\ga} f_t\)\, \omega = 
\Big(\frac{\d t_N}{\d \tau^\ga} + O(\gl^{-m-1})\Big)
\(m\gl^m + (\nu_0-\nu_1) \) \frac{d\gl}{\gl}. 
\een
From here we find that the $\(-\res_{x=0}\)$-term in the residue pairing 
of $(\d/\d \tau^\ga,\d/\d \tau^\gb)$ equals to
\ben
-\res_{\gl=\infty}
\Big(\frac{\d t_N}{\d \tau^\ga}\frac{\d t_N}{\d \tau^\ga} + O(\gl^{-m-1})\Big)
\(m\gl^m + (\nu_0-\nu_1) \) \frac{d\gl}{\gl}.
\een
On the other hand $t_N = (\tau^{0/k}-\tau^{0/m})/(\nu_0-\nu_1).$ Therefore,
the above residue is 0 unless $\ga=\gb = 0/k,$ in which case it equals 
$\frac{1}{\nu_0-\nu_1}.$
\qed

We trivialize the tangent bundle $TM\iso M\times H$ via the flat coordinates,
i.e., $\d/\d \tau^\ga\mapsto {\bf 1}_\ga.$ Let us denote by $\bullet_\tau'$ 
the multiplication in the tangent space $T_\tau M\iso H.$ 
\begin{conjecture}
The equivariant cup product $\bullet_\tau$ coincides with $\bullet_\tau'.$
\end{conjecture}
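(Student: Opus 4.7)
The strategy is mirror symmetric: show $(M,\bullet',(\cdot,\cdot))$ defines a semisimple Frobenius manifold, establish agreement with the A-model on the slice $\tau=t_N p$, and extend to all of $M$ via $J$-function matching. First, I would verify that $(M,\bullet',(\cdot,\cdot))$ is a Frobenius manifold. Commutativity and associativity of $\bullet'$ are built into the Jacobi ring description. Flatness of the residue metric in the coordinates $\{\tau^\alpha\}$ is precisely Lemma \ref{res_poincare_pairing}. The WDVV equations, potentiality, and flatness of the identity follow from K.~Saito's theory of primitive forms applied to $\omega = dx/x$; the logarithmic terms $\nu_1\log x + \nu_0\log(Qe^{t_N}/x)$ in $f_t$ do not obstruct $\omega$ from being primitive in the generalized sense appropriate for equivariant/logarithmic Landau--Ginzburg models.

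Second, on the slice $\tau = t_N p$ the two Frobenius structures admit matching explicit presentations. The small equivariant orbifold quantum cohomology was identified earlier in the section with $\C[x,x^{-1}]/\langle\d_x f\rangle$ for general $k,m$, and this ring is exactly $T_\tau M$ at the point $t_1=\cdots=t_{N-1}=0$. Combined with Lemma \ref{res_poincare_pairing}, both the product and the pairing agree on this slice. Both Frobenius structures are then generically semisimple with canonical coordinates equal to the critical values of $f_t$.

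Third, I would promote the small-slice agreement to the full parameter space by matching $J$-functions. The A-model $J$-function together with its first derivatives along all directions ${\bf 1}_\alpha$ is computed in Proposition \ref{J} and Corollary \ref{dJ}. On the B-side, the corresponding role is played by the oscillating integrals
\begin{equation*}
I_\Gamma(t,z) \;=\; \int_\Gamma e^{f_t/z}\,\omega
\end{equation*}
along Lefschetz thimbles $\Gamma\subset\C^*$; these form a fundamental system of solutions to the quantum differential equations of $(M,\bullet',(\cdot,\cdot))$, and they satisfy the same hypergeometric equation as in Corollary \ref{qde} (checked by a direct integration-by-parts argument using the explicit form of $f_t$). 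Matching stationary-phase asymptotics at $\tau = t_N p$ identifies a distinguished linear combination of oscillating integrals with $J_X$, with thimbles grouped around the critical points over $x\sim 0$ (respectively $x\sim\infty$) corresponding to the twisted sectors ${\bf 1}_{i/k}$ (respectively ${\bf 1}_{j/m}$). Once the $J$-functions are identified and both sides are semisimple Frobenius manifolds whose small parts coincide, Dubrovin's reconstruction for semisimple Frobenius manifolds forces the two structures to agree globally on $M$.

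The main obstacle is the precise identification of the twisted-sector coefficients in Proposition \ref{J} with residue/stationary-phase contributions of the Lefschetz thimbles near the $k$ critical points lying over $x=0$ and the $m$ critical points lying over $x=\infty$. The fractional Gamma-like coefficients $\prod_b(\nu+bz)^{-1}$ and $\prod_b(\bnu+bz)^{-1}$ must be recovered from the correctly normalized oscillating integrals, which requires an equivariant/orbifold refinement of the Barannikov--Givental mirror theorem for $\CC$. Everything else is standard Frobenius manifold technology.
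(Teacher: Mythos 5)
Your proposal has a genuine gap at the crucial step, namely the passage from agreement on the slice $\tau=t_N p$ to agreement on all of $M$. The appeal to ``Dubrovin's reconstruction for semisimple Frobenius manifolds'' does not work here: that reconstruction (and the rigidity of isomonodromic deformations underlying it) requires a \emph{conformal} structure, i.e.\ an Euler vector field making the structure constants homogeneous, and this is exactly what the equivariant theory lacks --- the parameters $\nu_0,\nu_1$ destroy the usual grading, and the paper states explicitly that reconstruction of non-conformal Frobenius manifolds is not available. Semisimplicity together with coincidence of the metric (Lemma~\ref{res_poincare_pairing}) and of the product along a one-dimensional slice does not by itself pin down the big product; some additional rigidity input is indispensable. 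The paper's route, valid when $k$ and $m$ are co-prime, supplies that input differently: the small equivariant orbifold quantum cohomology is multiplicatively generated in degree $2$, so the big quantum product is determined by the small quantum D-module via Iritani's reconstruction theorem (\cite{Ir}, Theorem 4.9), and the small D-module is matched with the Landau--Ginzburg side through Proposition~\ref{J}, Corollary~\ref{qde} and Corollary~\ref{dJ}. Your $J$-function/oscillating-integral matching at $\tau=t_Np$ is essentially this same small-slice comparison, but without the degree-$2$ generation plus D-module reconstruction argument (or some substitute for it) the global conclusion does not follow.

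Note also that the statement is formulated in the paper as a conjecture precisely because no general argument is known: the authors establish it only for co-prime $k,m$ (by the Iritani argument above) and, for general $k,m$, in the non-equivariant limit $\nu_0=\nu_1=0$ by \cite{MT}. A proof along your lines would therefore have to either restrict to the co-prime case and replace the Dubrovin step by a D-module (or Hertling--Manin type) reconstruction adapted to the non-conformal equivariant setting, or else produce a genuinely new reconstruction theorem --- which is the very obstruction the authors identify. The first two steps of your outline (Saito-type Frobenius structure for $f_t$ with $\omega=dx/x$, and the identification on the small slice) are consistent with what the paper does, but they are not where the difficulty lies.
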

This may be interpreted as saying that $f_t$ is the equivariant mirror of $\CC_{k,m}$.

As discussed above, we know that the conjecture holds for $\tau=t_Np$.

Suppose that $k$ and $m$ are co-prime, then the equivariant orbifold quantum cohomology of $\CC_{k,m}$ is multiplicatively generated in degree $2$. Thus the conjecture follows from the reconstruction result of abstract quantum D-module (\cite{Ir}, Theorem 4.9).

For general $k,m$,  we know from our previous article \cite{MT} that in the non-equivariant limit $\nu_0=\nu_1=0$ the conjecture also holds. 
We expect that there should be a reconstruction-type theorem in equivariant quantum 
cohomology that implies the conjecture from the facts that we already know. 
However, reconstruction of non-conformal
(i.e. the structure constants of the cup product are not homogeneous functions) 
Frobenius manifolds, is a topic in Gromov--Witten theory not explored yet.
On the other hand it is not entirely true that the homogeneity is lost, because we
can assign degree 2 (or 1 if we work with complex degrees) to each of the 
characters $\nu_0$ and $\nu_1$ and then the structure constants will be 
homogeneous. However, we could not find a way to use this homogeneity property.

We would like to remark that the Frobenius manifold $M$ in this section
is a slight generalization of the Frobenius structure on the space
of orbits of the extended affine Weyl group of type $A$, introduced by 
B. Dubrovin in \cite{D}. In particular our arguments are parallel to 
the ones in \cite{D}. Apparently, a similar Frobenius manifold was 
introduced by J. Ferguson and I. Strachan (see \cite{FS}) in their study 
of logarithmic deformations of the dispersionless KP-hierarchy. 

\subsection{Oscillating integrals}
Let $\tau\in M\iso H$ be such that $f_\tau$ is a Morse function. Denote by 
$\xi_i\in \C^*,$ $i=1,2,\ldots, k+m$ the critical points of $f_\tau.$
For each $i$ we choose a semi-infinite homology cycle $B_i$ in 
\ben
\lim_{M\rightarrow \infty} 
H_1(Y_\tau, \{ {\rm Re }({f_\tau/z})<-M\};\Z)\iso \Z^{k+m}.
\een 
as follows. Pick a Riemannian metric on $\C^*$ and let $B_i$ be the union of the gradient
trajectories of ${\rm Re} (f_\tau/z)$ which flow into the critical point $\xi_i.$ 
We remark that the function $f_\tau$ is multivalued, however its gradient is a
smooth vector field on $\C^*$ and so the above definition of $B_i$ makes sense. 
Let $\mathfrak{J}:\C^{k+m} \rightarrow H$ be the linear map defined by 
\beq\label{o_int}
(\mathfrak{J}e_i, {\bf 1}_\ga ) := (-2\pi z)^{-1/2}\int_{B_i} e^{f_\tau/z} \phi_\ga \omega,
\eeq
where $e_i,$ $i=1,2,\ldots , k+m$ is the standard basis of $\C^{k+m},$  
the index $\ga \in \Z_k\sqcup\Z_m,$ and we also fixed a choice of a branch 
of $f_\tau$ in a tubular neighborhood of the cycle $B_i.$

Using the method of stationary phase asymptotics (e.g. see \cite{AGV}) we get
that the map $\mathfrak{J}$ admits the following asymptotic: 
\beq\label{J_as}
\mathfrak{J}\sim \Psi (1+R_1 z+ R_2 z^2+\ldots) e^{U/z},\mbox{ as } z\rightarrow 0,
\eeq    
where $R_1, R_2,\ldots $ and $U = \diag (u_1, \ldots ,u_{k+m})$ ($u_i=f_\tau(\xi_i)$ are 
the critical values of $f_\tau$) are linear 
operators in $\C^{k+m},$ and $\Psi:\C^{k+m}\rightarrow H$ is a linear isomorphism
(independent of $z$). Under the isomorphism $\Psi,$ the product $\bullet_\tau$ and
the residue pairing are transformed respectively into
\ben
e_i\bullet_\tau e_j = \Delta_i^{1/2}\delta_{i,j} e_j,\quad (e_i,e_j) = \delta_{i,j},
\een 
where $\delta_{i,j}$ is the Kronecker symbol and $\Delta_i$ is the Hessian 
of $f_\tau$ at the critical point $\xi_i$ with respect to the volume form $\omega_\tau,$
i.e., choose {\em a unimodular} coordinate $t$ in a neighborhood of $\xi_i$ so that
$\omega = dt$ and then $\Delta_i = \d_t^2f_\tau(\xi_i).$  We will write $R=1+R_1z+R_2z^2+...$.

We are ready to define the function $\D^{\rm Fr}.$ However before we
do this let us list two more facts which are not needed in the sequel
but will be important for proving that $\D^{\rm Fr}$ coincides 
with the total descendent potential of $\CC_{k,m}.$ 

\begin{theorem}\label{de_oint}
The map $\mathfrak{J}$ satisfies the following differential equations:
\beq\label{mde}
z\d_{\tau^\ga} \mathfrak{J} = 
\(\phi_\ga \bullet_\tau' \)\, \mathfrak{J},\quad \ga\in \Z_k\sqcup\Z_m.
\eeq
\end{theorem}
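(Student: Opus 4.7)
The plan is to derive the differential equation directly from the integral representation by differentiating under the integral sign and applying integration by parts. Pair both sides of \eqref{mde} with ${\bf 1}_\beta$ and use the Frobenius property $(\phi_\alpha\bullet_\tau' u,v) = (u,\phi_\alpha\bullet_\tau' v)$; the claim reduces to
\[
z\,\partial_{\tau^\alpha}\int_{B_i}e^{f_\tau/z}\phi_\beta\,\omega = \int_{B_i}e^{f_\tau/z}(\phi_\alpha\bullet_\tau'\phi_\beta)\,\omega
\]
for every thimble $B_i$ and every $\alpha,\beta\in\Z_k\sqcup\Z_m$. Since the prefactor $(-2\pi z)^{-1/2}$ is $\tau$-independent and $\omega=dx/x$ does not depend on $\tau$, differentiation under the integral gives
\[
z\,\partial_{\tau^\alpha}\int_{B_i}e^{f_\tau/z}\phi_\beta\,\omega = \int_{B_i}e^{f_\tau/z}(\partial_{\tau^\alpha}f_\tau)\phi_\beta\,\omega + z\int_{B_i}e^{f_\tau/z}(\partial_{\tau^\alpha}\phi_\beta)\,\omega,
\]
where the second term accounts for the dependence of $\phi_{j/m}=(Qe^{t_N}/x)^j$ on $\tau$ through $t_N=(\tau^{0/k}-\tau^{0/m})/(\nu_0-\nu_1)$.

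By the definition of $\bullet_\tau'$ as the product in $\C[x,x^{-1}]/\langle\partial_x f_\tau\rangle$ together with the identification $\partial/\partial\tau^\alpha\leftrightarrow\phi_\alpha$, there is a Laurent polynomial $h_{\alpha\beta}(x;\tau)$ with
\[
(\partial_{\tau^\alpha}f_\tau)\,\phi_\beta - \phi_\alpha\bullet_\tau'\phi_\beta = h_{\alpha\beta}\,\partial_x f_\tau.
\]
Using the identity $e^{f_\tau/z}\,\partial_x f_\tau\,dx = z\,d(e^{f_\tau/z})$ and the exponential decay of $e^{f_\tau/z}$ along the non-compact ends of the Lefschetz thimble $B_i$, integration by parts yields
\[
\int_{B_i}e^{f_\tau/z}h_{\alpha\beta}\partial_x f_\tau\,\omega = -z\int_{B_i}e^{f_\tau/z}(h_{\alpha\beta}' - h_{\alpha\beta}/x)\,\omega.
\]
Combining these reductions, \thref{de_oint} is equivalent to the identity
\[
\int_{B_i}e^{f_\tau/z}\bigl[\partial_{\tau^\alpha}\phi_\beta - h_{\alpha\beta}' + h_{\alpha\beta}/x\bigr]\,\omega = 0
\]
for each $B_i$ and every pair $\alpha,\beta$.

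The main obstacle is precisely this last vanishing; it is the content of the flatness of the coordinates $\tau^\alpha$ with respect to the primitive form $\omega=dx/x$. The cleanest route is to verify that $dx/x$ is primitive in Saito's sense for the family $\{f_\tau\}$: once this is established, the equation $z\partial_{\tau^\alpha}\mathfrak{J}=(\phi_\alpha\bullet_\tau')\mathfrak{J}$ is a standard consequence of the general theory of Saito Frobenius structures, and in particular of Dubrovin's framework~\cite{D} for Frobenius manifolds on orbit spaces of extended affine Weyl groups of type $A$, which our construction generalizes. Alternatively, one may change variables to the coordinate $\lambda$ defined by \eqref{change_infty} near $x=\infty$ and by \eqref{change_0} near $x=0$, in which $f_\tau$ takes the almost-linear form $\lambda^k + \nu_1\log\lambda + \nu_0\log(Q/\lambda)$; in these coordinates the bracketed integrand admits an explicit asymptotic expansion that one checks to vanish term by term, parallel to, but more involved than, the residue computation carried out in \leref{res_poincare_pairing}.
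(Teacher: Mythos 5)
Your setup---pair with ${\bf 1}_\gb$, differentiate under the integral sign, divide $(\d_{\tau^\ga}f_\tau)\phi_\gb$ by $\d_x f_\tau$ in the Jacobian algebra, and integrate by parts along the thimble $B_i$---is the standard scheme, and it is essentially the scheme behind the proof the paper points to (the paper itself omits the argument, saying it is the same as that of Lemma 3.1 in \cite{MT}). But as written your proposal does not prove the theorem. The entire content of the statement is concentrated in the final vanishing
$\int_{B_i}e^{f_\tau/z}\bigl[\d_{\tau^\ga}\phi_\gb-h_{\ga\gb}'+h_{\ga\gb}/x\bigr]\omega=0$,
which you yourself single out as the main obstacle and then discharge by appealing to ``the general theory of Saito Frobenius structures'' and to Dubrovin \cite{D}. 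That appeal is circular and inapplicable here: the assertion that $\omega=dx/x$ is primitive for this particular equivariant family $f_\tau$ (with the $\nu_0,\nu_1$ logarithmic terms) is exactly what would make \eqref{mde} a formal consequence, so it cannot be invoked without verification, and \cite{D} concerns the non-equivariant orbit spaces of extended affine Weyl groups, of which the present $M$ is explicitly only a generalization, so it does not literally cover this case. Your alternative route---pass to the coordinate $\gl$ of \eqref{change_infty}, \eqref{change_0} and ``check term by term''---is likewise not carried out. Since the thimbles $B_i$ span the relevant homology and \eqref{mde} must hold identically in $z$, what actually has to be shown is that the bracketed expression vanishes (or is exact of the allowed twisted type at each order of $z$); this is a finite, concrete computation with the explicit flat coordinates $\tau^\ga$ and the representatives $\phi_\ga$, and it is precisely the step that is missing.

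A secondary point: the division identity $(\d_{\tau^\ga}f_\tau)\phi_\gb-\phi_\ga\bullet_\tau'\phi_\gb=h_{\ga\gb}\,\d_x f_\tau$ presupposes that $\d_{\tau^\ga}f_\tau$ and $\phi_\ga$ define the same class in $\C[x,x^{-1}]/\<\d_x f_\tau\>$, and your pairing step uses both the coincidence of the residue and {\Poincare} pairings (\leref{res_poincare_pairing}) and the self-adjointness of $\bullet_\tau'$ with respect to the residue pairing. These facts are available in the paper's framework, but they belong to the same explicit bookkeeping you deferred: the relation between $\d f_\tau/\d\tau^\ga$ and the chosen representatives $\phi_\ga$ is exactly where the needed vanishing is decided, so recording it as an unproved ``flatness'' fact leaves the theorem unproved.
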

The proof of this theorem will be omitted because it is the same as 
the proof of Lemma 3.1 in \cite{MT}.

Assume that $\tau=t_Np$ and that the critical points $\xi_i$ of $f_\tau$ are 
numbered in such a way that 
\ben
\xi_i = \nu^{1/k} + \ldots,\ 1\leq i\leq k \mbox{ and }
\xi_{k+j} = Qe^\tau\overline{\nu}^{-1/m} +\ldots ,\ 1\leq j\leq m,  
\een
where the two groups of expansions are obtained by solving $f_\tau'(x)=0$ 
respectively in a neighborhood of $x=\infty$ and $x=0,$ the dots stand for higher
order terms in $Q,$ and the index $i$ (resp. $j$) corresponds to a 
choice of $k$-th root of $\nu$ (resp. $m$-th root of $\overline{\nu}$).
Put 
\ben
&&
g_{\ga i} := {g_\ga}\nu^{j/k-1/2}, \ \ga=j/k\in\Z_k\  1\leq j\leq k,\ 1\leq i\leq k,\\
\notag
&&
g_{\ga i} := {g_\ga} 
\overline{\nu}^{j/m-1/2}, \ 
\ga=j/m\in\Z_m\  1\leq j\leq m,\ k+1\leq i\leq k+m,
\een
\begin{lemma}\label{asympt_classical}
The asymptotical solution admits a classical limit $Q=0$ which is
characterized as follows: $( \Psi R e_i,{\bf 1}_\ga),$ turns into either
\beq\label{as_k}
g_{\ga i}
\exp \(\sum_{n=2}^{\infty} \frac{B_n(1-j/k)}{n(n-1)}(-\nu)^{-n+1} z^{n-1}\)
\eeq
if $\ga=j/k$, $1\leq j\leq k$, or 
\beq\label{as_m}
g_{\ga i}
\exp \(\sum_{n=2}^{\infty} \frac{B_n(1-j/m)}{n(n-1)}(-\overline{\nu})^{-n+1} z^{n-1}\),
\eeq
if  $\ga = j/m\in\Z_m,$ $1\leq j\leq m,$ where $B_n(x)$ are the Bernoulli polynomials:
\ben
\frac{e^{tx}t}{e^t-1} = \sum_{n=0}^\infty B_n(x)\frac{t^n}{n!}. 
\een
\end{lemma}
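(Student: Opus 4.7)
The strategy is to extract the matrix entries of $\Psi R$ directly from the stationary-phase expansion \eqref{J_as} of the oscillating integrals \eqref{o_int}, which in the classical limit $Q=0$ reduce to explicit Gamma-function integrals, so that Stirling's asymptotic expansion of $\log\Gamma$ produces precisely the Bernoulli polynomials appearing in \eqref{as_k}--\eqref{as_m}.

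First I would set $Q=0$ in $f_\tau$. Modulo an additive constant (which gets absorbed in $U$), $f_\tau$ then decouples: near $x=\infty$ the surviving piece is $f^{(k)}(x)=x^k-k\nu\log x$, whose critical points are $\xi_i=\nu^{1/k}\zeta^i$ ($\zeta$ a primitive $k$-th root of unity), and near $y=Qe^{t_N}/x=\infty$ the surviving piece is $f^{(m)}(y)=y^m-m\bnu\log y$ with critical points $\xi_{k+j}=\bnu^{1/m}\eta^j$. The thimbles $B_i$ localize near the respective critical points. For $\alpha=j/k$ with $i>k$ (or $\alpha=j/m$ with $i\le k$) the integrand $\phi_\alpha$ vanishes pointwise on the limiting thimble (e.g.\ $\phi_{j/k}=x^j\to 0$ as $x\to 0$), so the cross-block entries of $\Psi R$ vanish.

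For $\alpha=j/k$ with $i\le k$, rotating $B_i$ onto a standard ray and substituting $u=-x^k/z$ reduces the oscillating integral to a Gamma integral,
\ben
(\mathfrak{J}e_i,{\bf 1}_{j/k})=(\text{phase}_i)\cdot(-2\pi z)^{-1/2}\cdot\tfrac{1}{k}\,z^{j/k-\nu/z}\,\Gamma\!\(\tfrac{j}{k}-\tfrac{\nu}{z}\),
\een
for $1\le j\le k-1$, and similarly for $\phi_{0/k}=kx^k/(\nu_0-\nu_1)$ in the $j=k$ case (shifted by an overall factor of $1/\nu$). Invoking Stirling,
\ben
\log\Gamma(a+s)\sim(a+s-\tfrac12)\log s-s+\tfrac12\log(2\pi)+\sum_{n\geq 2}\frac{(-1)^nB_n(a)}{n(n-1)}\,s^{-(n-1)},
\een
with $a=j/k$, $s=-\nu/z$, and using the reflection $(-1)^n B_n(a)=B_n(1-a)$, one reads off the exponent in \eqref{as_k} as $\sum_{n\ge 2}\tfrac{B_n(1-j/k)}{n(n-1)}(-\nu)^{-(n-1)}z^{n-1}$. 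The $z$-independent Stirling prefactor $(2\pi)^{1/2}$, combined with $(-2\pi z)^{-1/2}$, the factor $1/k$, and the power $(-\nu/z)^{j/k-1/2}$, collapses to the constant $g_\alpha\nu^{j/k-1/2}=g_{\alpha i}$; the remaining divergent factor $(-\nu/z)^{-\nu/z}e^{\nu/z}$ together with the $Q$-dependent additive constants assembles, for each $i$, into $e^{f_\tau(\xi_i)/z}=e^{u_i/z}$ and is precisely the $e^{U/z}$ piece being factored out.

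The case $\alpha=j/m$, $i>k$, is symmetric under $\nu\leftrightarrow\bnu$, $k\leftrightarrow m$, $x\leftrightarrow y$, yielding \eqref{as_m}. The main technical obstacle is the phase/branch bookkeeping: the choice of $k$-th root $\zeta^i$ defining $\xi_i$ dictates branches of $(-\nu)^{1/k}$ and of $\log x$ along $B_i$, and these must conspire so that (i) the power $(-\nu)^{-(n-1)}$, rather than $\nu^{-(n-1)}$, appears in the exponent, (ii) the $i$-dependent Stirling and thimble phases cancel, leaving only the $i$-independent constant $g_{\alpha i}$, and (iii) the logarithmic divergences $\nu_0\log Q$ and $\nu_1\log Q$ present in $f_\tau$ at $Q=0$ are entirely absorbed into the critical values $u_i$, not into $\Psi R$.
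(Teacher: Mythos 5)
Your argument is correct and shares the paper's overall strategy (pass to the classical limit $Q=0$, localize on the thimble through $\xi_i$, and convert the oscillating integral into a one-dimensional Gamma-type integral: the paper substitutes $t=x^k$ and studies $I(\nu,z,s)=\int_B e^{(t-\nu\log t)/z}t^{s-1}dt$ with $s=j/k$, which after $t=-zu$ is exactly your $(-z)^{s-\nu/z}\Gamma(s-\nu/z)$), but the extraction of the Bernoulli polynomials is done by a different mechanism. You quote the generalized Stirling series $\log\Gamma(w+a)\sim (w+a-\tfrac{1}{2})\log w - w + \tfrac{1}{2}\log 2\pi + \sum_{n\geq 2}(-1)^nB_n(a)\,w^{1-n}/(n(n-1))$ at $a=j/k$, $w=-\nu/z$, together with the reflection $(-1)^nB_n(a)=B_n(1-a)$. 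The paper never invokes that formula: it keeps $s$ as a parameter, writes the stationary-phase expansion with undetermined coefficients $A_n(s)$, uses the relations $(z\d_z+\nu\d_\nu)I=(-\nu/z+s)I$ and $(z\d_\nu+\d_s)I=0$ to obtain a first-order recursion for the $A_n'(s)$, fixes the initial condition at $s=1$ from the ordinary (Bernoulli-number) Stirling series for $\Gamma(1-\nu/z)$, and then identifies $A_n(s)=B_n(1-s)/(n(n-1))$ using $B_n'=nB_{n-1}$. Your route is shorter if one is willing to cite the classical $\log\Gamma(w+a)$ expansion, and your prefactor bookkeeping is carried out correctly: the constant collapses to $g_{\ga i}=g_\ga\nu^{j/k-1/2}$ and the divergent piece, including the $\log Q$ terms, is absorbed into $e^{u_i/z}$, exactly as needed. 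The paper's route buys self-containedness (only Bernoulli numbers are taken as known) and explains a priori why the exponent is a series in $z/\nu$ alone. Two small cautions, both of which you already file under branch bookkeeping: the intermediate formula should carry $(-z)^{j/k-\nu/z}$ rather than $z^{j/k-\nu/z}$ (this sign is precisely what produces $(-\nu)^{-n+1}$ in the exponent and the real constant $g_{\ga i}$ in front), and the apparent $i$-independence of $g_{\ga i}$ holds only after interpreting $\nu^{j/k-1/2}$ through the branch of $\nu^{1/k}$ attached to the chosen critical point $\xi_i$ — the same implicit convention the paper uses, so neither point is a gap in your argument.
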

\proof
It is enough to verify the first asymptotic, because for the second one we can employ the
symmetry: switch $\nu_0$ with $\nu_1$ and $k$ with $m.$
We have to compute the asymptotic of \eqref{o_int} up to higher order terms in $Q.$ 
Therefore  we can use 
$x^k + (\nu_1-\nu_0)\log x$ instead of $f_\tau$ and also we can assume 
that $e_i$ corresponds to the critical point $\xi_i$, $1\leq i\leq k.$ 
Let us make the substitution 
$t=x^k.$ Then the integral \eqref{o_int}, modulo higher order terms in $Q$, turns into
\beq\label{a_osc}
k^{-1}(-2\pi z)^{-1/2}\int_B e^{(t-\nu\log t)z^{-1}}\phi_\ga(t^{1/k})t^{-1}dt,
\eeq
where the cycle $B$ is constructed via Morse theory for  ${\rm Re}\ (t-\nu\log t)/z$
(see the construction of $B_i$ in \eqref{o_int}). 

More generally, we will compute explicitly the asymptotic as $z\rightarrow 0$ of 
\beq\label{qo_int}
I(\nu,z,s)=\int_B e^{(t-\nu\log t)z^{-1}}t^{s-1}dt,
\eeq
where $s>0$ is any real number. 
Using the method of stationary phase asymptotic (see \cite{AGV}) we get that 
\eqref{qo_int} admits an asymptotic as $z\rightarrow 0$ of the following type: 
\ben
e^{(\nu-\nu\log\nu)/z} \nu^{s-1} (-2\pi \nu z)^{1/2} e^{\sum_{n=2}^\infty A_n(s)(-z/\nu)^{n-1}}.
\een 
In order to verify that the sum in the exponent depends on $z/\nu$ note that the integral 
\eqref{qo_int} satisfies the differential equation $(z\d_z+\nu\d_\nu)I=((-\nu/z)+s)I.$
Furthermore, one checks that $I$ satisfies the differential equation $(z\d_\nu+\d_s)I=0$
which imposes the following recursive relations on the polynomials $A_n$:
\beq\label{a_de}
A_2'(s) = s-\frac{1}{2}, \quad A_{n+1}'(s) = -A_n(s).
\eeq
On the other hand when $s=1$ the asymptotic of \eqref{qo_int} is easily expressed 
in terms of the asymptotic of the Gamma function: 
\ben
(-z)^{-\nu/z+1}\Gamma\(-\frac{\nu}{z}+1\)\sim
e^{(\nu-\nu\log\nu)/z} (-2\pi \nu z)^{1/2} 
e^{\sum_{n=1}^\infty  \frac{B_{2n}}{2n(2n-1)}(-z/\nu)^{2n-1}},
\een
where $B_n=B_n(0)$ are the Bernoulli numbers and the asymptotic of the Gamma function 
is well known (e.g. see \cite{BH}). Thus the coefficient $A_n$ satisfy the 
following initial condition $A_n(1)= B_n/(n(n-1))$ (note that for $n\geq 2$ the odd
Bernoulli numbers vanish), which together with \eqref{a_de} uniquely determines $A_n.$
Using that the Bernoulli polynomials satisfy the identity: $B_n'(x) = nB_{n-1}(x)$, it is
easy to verify that $A_n(s) = B_n(1-s)/(n(n-1)).$ 
\qed
\begin{remark}
Lemma \ref{asympt_classical} implies Givental's {\em R-conjecture} for $\CC_{k,m}$.
\end{remark}

\subsection{The symplectic loop space formalism}
Let $\H:=H((z^{-1}))$ be the space of formal Laurent series in $z^{-1}$ 
with coefficients in $H.$ We equip $\H$ with the symplectic form:
\ben
\Omega(\f(z),\g(z)):= \res_{z=0} \(\f(-z),\g(z)\) dz. 
\een
Let $\{ {\bf 1}^\ga\}_{\ga\in \Z_k\sqcup\Z_m}$ be a basis of $H$ 
dual to $\{ {\bf 1}_\ga\}$ with respect to the {\Poincare} pairing. 
Then the functions 
$p_{n,\ga} = \Omega(\ , {\bf 1}_\ga z^n)$ and 
$q_n^\ga = \Omega({\bf 1}^\ga (-z)^{-n-1},\ ),$ where $n\geq 0$ 
and $\ga\in \Z_k\sqcup\Z_m $ form a Darboux coordinate system on $\H.$ 
We quantize functions on $\H$ via the Weyl's quantization rules: 
the coordinate functions ${p}_{n,\ga}$ and $q_n^\ga$ are 
represented respectively by the differential operator 
$\widehat{p}_{n,\ga} = \ge\,\d/\d q_n^\ga$ and the multiplication 
operator $\widehat{q}_{n}^\ga = \ge^{-1}\,q_n^\ga,$ and we demand  
normal ordering, i.e., always put the differentiation
before the multiplication operators.

If $A$ is an infinitesimal symplectic transformation of $\H$ then the map
$\f \mapsto A \f$ determines a linear {\em Hamiltonian} vector field. 
It is straightforward to verify that the corresponding Hamiltonian
coincides with the quadratic function $h_A:=-\frac{1}{2}\Omega(A\f,\f).$
By definition $\widehat{A}:= \widehat{h}_A.$ If $M$ is a symplectic 
transformation of $\H$ such that $A:=\log M$ exist then we define 
$\widehat{M} := e^{\widehat{A}}.$

From now on we will consider only $\tau=t_N p.$ Put
\beq\label{Dsc}
\D^{\rm Fr}=
C(\tau)\widehat{S}_\tau^{-1}\(\Psi Re^{U/z}\)\sphat \ \prod_{i=1}^{k+m} \D_{\rm pt}(\q^i),
\eeq
where the vector space $H$ is identified with the standard vector space $\C^{k+m}$
via $\Psi$ and $\q^i$ are the coordinates of $\q\in H[z]$ with respect to the standard
basis, i.e., $\sum \q^i(z)e_i = \Psi^{-1}\q(z),$ and $\D_{\rm pt}$ is the total descendent
potential of a point:
\ben
\D_{\rm pt}(\t) = \exp \(\sum_{n,g} \ge^{2g-2}\frac{1}{n!} \int_{\overline{M}_{g,n}}
\prod_{j=1}^n (\t(\psi_j)+\psi_j)\), 
\een 
where $\t(z)=t_0+t_1 z+\ldots\in \C[z].$ The factor $C$ in \eqref{Dsc} is a complex-valued 
function on $H$ such that it makes the RHS independent of $\tau.$ For all 
further purposes $C(\tau)$ is irrelevant and it will be ignored.

\sectionnew{Vertex operators and the equivariant mirror model of $\CC_{k,m}$}

\subsection{Introduction}

Given a vector $\f\in \H$, the corresponding linear function 
$\Omega(\ ,\f)$ is a linear combination of $p_{n,\ga}$ and $q_n^\ga$
and $\widehat \f$ is defined by the above rules. Expressions like
$e^\f,\ \f\in\H$ are quantized by first decomposing $\f=\f_-+\f_+$, where 
$\f_+$ (respectively $\f_-$) is the projection of $\f$ on $\H_+ :=H[z]$ 
(respectively $\H_-:=z^{-1}H[[z^{-1}]]$), and then setting
$\(e^ \f\)\sphat = e^{\widehat \f_-}e^{\widehat \f_+}$. Note that
the vertex operators in the introduction are quantized exactly
in this way.

The proof of \thref{t1} amounts to conjugating the vertex operators $\Gamma^\pm$ and 
$\overline{\Gamma}^\pm$ by the symplectic transformation $\widehat{S}_\tau$ and then 
by $\Psi R e^{U/z}.$  For the first conjugation we use the following formula
(\cite{G1}, formula (17)):
\beq
\label{s_conj}
\widehat S_\tau e^{\hat \f}\widehat S_\tau^{-1}  = 
e^{W(   \f_+  ,  \f_+   )/2}
e^{      (S_\tau \f) \sphat                      },
\eeq 
where $\f\in \H$ and $+$ means truncating the terms corresponding to the 
negative powers of $z$ and the quadratic form 
$W(\f_+,\f_+)=\sum (W_{nl}\f_l,\f_n)$ is defined by
\beq
\label{conj_s}
W_{nl}w^{-n}z^{-l} = \frac{S_\tau^*(w)S_\tau(z)-1}{w^{-1}+z^{-1}}.
\eeq 
Therefore, our next goal is to compute $S_\tau\f^\pm$ and 
$S_\tau\overline{\f}^\pm.$ Before doing so we explain a very important property of our vertex operators. The
content of the next section is the key to the proof of \thref{t1}.

\subsection{Changing the coordinate $\gl$} 
Let us denote by $\O$ the space of formal Laurent series in $\gl^{-1}$
and by $\O[[z^{\pm1}]]$ the space of formal series:
\beq\label{element_o}
\f(\gl,z) = \sum_{n\in \Z} \I^{(n)}(\gl)(-z)^n,
\quad \mbox{ such that } \quad
\lim_{n\rightarrow\infty} \I^{(n)}(\gl) = 0,
\eeq
where the limit is understood in the $\gl$-adic sense, i.e., 
for each $N>0$ there exist $d\in \Z$ such that 
$\I^{(n)}\in \gl^{-N}\C[[\gl^{-1}]]$ for all $n\geq d.$

Furthermore, we fix an element $\phi \in \O$ such that both
$\res_{\gl=\infty}\phi$ and the polynomial part $p\in \C[\gl]$ of
$\phi$ are non-zero and we introduce the following first order 
differential operator:
\beq\label{dop}
D = -zp^{-1}\d_{\gl} - p^{-1}\phi_-,
\eeq
where $\phi_-:= \phi - p.$

Let $\g\in \O[[z^{\pm 1}]]$ be a Laurent series in $\gl^{-1}$ and $z^{-1}$:
\beq\label{g}
\g = \sum_{\substack { a\geq A \\ r\geq R}} g_{a,r} \gl^{-a} z^{-r}.
\eeq
We will prove that the operator $D$ is a linear isomorphism in $\O[[z^{\pm 1}]]$
and that the infinite sum
\beq\label{Dg}
\f = \sum_{n\in \Z} \I^{(n)}(\gl)\,(-z)^n:= \sum _{n\in \Z} D^n \g,
\eeq
is a well defined element of $\O[[z^{\pm 1}]]. $
The main result in this subsection is the following transformation law 
for $\f.$
\begin{proposition}\label{transformation}
If $x=\gl + a_0 + a_1\gl^{-1}+\ldots $ is 
another formal coordinate near $\gl=\infty.$ Then
\ben
\f(x) = \f(\gl)\exp\(z^{-1}\int^\gl_x \phi(t) dt \) \ .
\een
\end{proposition}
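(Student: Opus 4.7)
The core observation is that the definition $\f=\sum_{n\in\Z}D^n\g$ forces $\f$ to satisfy a first-order ODE in $\gl$: re-indexing gives $D\f=\f,$ which on unpacking reads
\begin{equation*}
z\,\d_\gl\f+\phi(\gl)\,\f=0.
\end{equation*}
The plan is to exploit this ODE to derive the transformation law under $\gl\mapsto x.$

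Before doing so, I would justify that $\sum_n D^n\g$ is a well-defined element of $\O[[z^{\pm1}]].$ Split $D=D_0+D_1$ with $D_0:=-zp^{-1}\d_\gl$ and $D_1:=-p^{-1}\phi_-;$ check that $D_0$ is an automorphism of $\O[[z^{\pm1}]]$ (the hypothesis that $p\neq0$ makes $p^{-1}\in\O,$ and the $z$ factor lets one resolve the kernel of $\d_\gl$); and verify that $D_0^{-1}D_1$ is topologically nilpotent in the $(\gl,z)$-filtration defining $\O[[z^{\pm1}]]$ through \eqref{element_o}. Then $D=D_0(1+D_0^{-1}D_1)$ is an automorphism, and a parallel filtration estimate makes sense of $\sum_n D^n\g.$ This step is where the hypotheses on $\phi$ are genuinely used, and it is the most technical piece of the argument.

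With the ODE available, the proposition follows from a deformation argument. Introduce the one-parameter family $x_s:=\gl+s(a_0+a_1\gl^{-1}+\cdots),$ $s\in[0,1],$ and set
\begin{equation*}
\Phi(s):=\f(x_s)-\f(\gl)\,\exp\(z^{-1}\int_{x_s}^{\gl}\phi(t)\,dt\).
\end{equation*}
At $s=0$ both terms equal $\f(\gl),$ so $\Phi(0)=0.$ Differentiating in $s$ using the chain rule together with the ODE $z\f'+\phi\f=0$ for the first term, and the Leibniz rule for the integral in the exponent of the second, yields the homogeneous linear ODE
\begin{equation*}
\d_s\Phi(s)=-z^{-1}\phi(x_s)\,(\d_s x_s)\,\Phi(s),
\end{equation*}
whose unique solution with $\Phi(0)=0$ is $\Phi\equiv0.$ Setting $s=1$ gives the claimed identity. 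En route one must check that $\exp(z^{-1}\int_{x_s}^\gl\phi)$ actually lives in $\O[[z^{\pm1}]]$: the Taylor expansion $F(\gl)-F(x_s)=-\sum_{k\geq1}(x_s-\gl)^k\phi^{(k-1)}(\gl)/k!$ is a well-defined element of $\O,$ and any $\log\gl$ contribution coming from a nonzero residue of $\phi$ collapses to $-c\log(x_s/\gl)\in\gl^{-1}\C[[\gl^{-1}]].$
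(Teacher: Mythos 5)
Your core argument is correct, and its first step is precisely the paper's: the paper's entire proof consists of noting $D\f=\f$, i.e.\ the recursion $\d_\gl \I^{(n)}(\gl)=\phi(\gl)\,\I^{(n+1)}(\gl)$ (your ODE $z\d_\gl\f+\phi\f=0$ is the same statement packaged differently), and then citing Lemma 3.2 of \cite{M1}. What you do differently is to replace that citation by a self-contained homotopy argument with $x_s=\gl+s(a_0+a_1\gl^{-1}+\cdots)$, and this is a sound route. It even has a virtue worth making explicit: the naive shortcut ``$\f(u)e^{F(u)/z}$ has vanishing $u$-derivative, hence is constant'' (with $F'=\phi$) is not available, because $F$ has $\gl$-degree $k$ while the modes of $\f$ decay only at rate at least $k$ per application of $D$, so the product $\f\, e^{F/z}$ is at best borderline as an element of $\O[[z^{\pm1}]]$; your $\Phi(s)$ only ever involves the difference $F(\gl)-F(x_s)$, of $\gl$-degree at most $k-1$, so all products occurring in your computation genuinely converge. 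To close the argument you should say how uniqueness for $\d_s\Phi=-z^{-1}\phi(x_s)(\d_sx_s)\Phi$, $\Phi(0)=0$, is meant in the formal setting: multiply by the integrating factor $\exp\(-z^{-1}\int_{x_s}^{\gl}\phi(t)\,dt\)$ (again only a difference of primitives enters) and note that every $(\gl,z)$-coefficient of the resulting expression is a polynomial in $s$ with vanishing $s$-derivative, hence zero. With that sentence added, your proof is a complete substitute for the reference to \cite{M1}.

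One peripheral inaccuracy: the well-definedness of $\sum_n D^n\g$ is not part of this proposition in the paper (it is handled separately by the estimates \eqref{zl_adic} and \leref{D}), and your proposed route to it does not work as stated. The operator $D_0=-zp^{-1}\d_\gl$ is not an automorphism of $\O[[z^{\pm1}]]$: $\d_\gl$ is not surjective on $\C((\gl^{-1}))$ (it misses $\gl^{-1}$), so $D_0^{-1}$ does not exist and the factorization $D=D_0(1+D_0^{-1}D_1)$ breaks down exactly in the residue direction. In the paper this is rescued not by $p\neq0$ alone but by the hypothesis $\res_{\gl=\infty}\phi\neq0$: after the substitution $\xi=\int p\,d\gl$, \leref{D} inverts the full operator $D_\xi=-z\d_\xi+\nu/\xi+\cdots$, and the inverse of the troublesome mode $\xi^{-1}$ exists precisely because the denominator $-z(\ga+1)+\nu$ is nonzero at $\ga=-1$ thanks to $\nu\neq0$. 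Since your proof of the transformation law itself only uses $D\f=\f$, this slip does not affect the proposition, but the invertibility sketch should either be dropped or corrected along the paper's lines.
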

\proof 
Note that $D\f=\f,$ i.e., 
\ben
\d_\gl \I^{(n)}(\gl) = \phi(\gl)\, \I^{(n+1)}(\gl).
\een 
Thus the same proof as in \cite{M1}, Lemma 3.2, applies. \qed

We will show that for each pair of positive integers $M$ and $N$ there exists 
$d\in \Z$ such that
\beq\label{zl_adic}
 D^{-M'}\g\, \in\, z^{-M}\O[[z^{-1}]]\quad \mbox{ and }\quad
 D^{N'} \g\, \in\, \gl^{-N} \C[[\gl^{-1},z^{\pm 1}]]
\eeq
for all $M'>d$ and $N'>d.$ This would imply that 
the infinite sum \eqref{Dg} is convergent in an appropriate $z,\gl$-adic
sense to some element in $\O[[z^{\pm 1}]].$ 

We pass to a new variable $\xi = \int p(\gl) d\gl.$ If $k-1$ is the degree of
the polynomial $p$ then, after inverting the change, we see that 
$\O\iso \C((\xi^{-1/k})).$ Also the differential operator $D$ takes the form
\ben
D_\xi = -z\d_\xi + \nu/\xi  + \sum_{i\geq 1} a_i \xi^{-1-i/k},
\een
where $a_i$ are some constants and $\nu\neq 0.$
   
\begin{lemma}\label{D}
The operator $D_\xi$ is a linear isomorphism in $\O[[z^{\pm 1}]]$ and its inverse
has the following property:  
\ben
D_\xi^{-1} \xi^\ga \ \in\  
\begin{cases}
z^{-1}\O[[z^{-1}]] & \mbox{ if $\ga \neq -1$}, \\
\nu^{-1} + z^{-1}\O[[z^{-1}]] & \mbox{ otherwise},
\end{cases}
\een
where $\ga \in (1/k)\Z.$
\end{lemma}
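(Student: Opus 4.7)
\textbf{Plan for \leref{D}.} The plan is to first invert the unperturbed operator $D_0:=-z\d_\xi+\nu/\xi$ explicitly on each monomial, then construct $D_\xi^{-1}$ as a Neumann series in the perturbation $V:=\sum_{i\geq 1}a_i\xi^{-1-i/k}$, and finally read off the claimed $z$-structure of $D_\xi^{-1}\xi^\ga$ by tracing through the series. The main obstacle is the combinatorial bookkeeping in the last step: one must see that when $\ga=-1$ the only constant contribution to $D_\xi^{-1}\xi^{-1}$ comes from the very first term of the Neumann series.

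\textbf{Inverting $D_0$ and extending by perturbation.} A direct check gives $D_0\bigl(\xi^{\ga+1}/(\nu-z(\ga+1))\bigr)=\xi^\ga$ for $\ga\neq -1$ and $D_0(1/\nu)=\xi^{-1}$. Expanding $1/(\nu-z(\ga+1))$ as a geometric series in $z^{-1}$ places $D_0^{-1}\xi^\ga$ in $z^{-1}\O[[z^{-1}]]$ whenever $\ga\neq -1$, while for $\ga=-1$ the inverse is the pure constant $1/\nu$. These formulas apply also to $(-z)^m\xi^\ga$ and, by linearity and $\xi$-adic continuity, define $D_0^{-1}$ on all of $\O[[z^{\pm 1}]]$. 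I then set $D_\xi^{-1}:=\sum_{n\geq 0}(-1)^n D_0^{-1}(VD_0^{-1})^n$. Since $V$ lowers $\xi$-order by at least $1+1/k$ and $D_0^{-1}$ raises it by at most $1$, each $VD_0^{-1}$ lowers $\xi$-order by at least $1/k$, so the series converges in the $\xi$-adic topology on every fixed $(-z)^l$-slot. It satisfies $D_\xi D_\xi^{-1}=I$ formally, and the same contraction estimate forces $\ker D_\xi=0$: if $D_\xi f=0$ then $f=(-D_0^{-1}V)^n f$ for all $n$, hence $f=0$. Thus $D_\xi$ is a linear isomorphism of $\O[[z^{\pm 1}]]$.

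\textbf{Extracting the $z$-structure.} Each application of $D_0^{-1}$ to a monomial $\xi^\gb$ contributes either (i) a factor $1/(\nu-z(\gb+1))\in z^{-1}\C[[z^{-1}]]$ when $\gb\neq -1$, or (ii) the pure constant $1/\nu$ when $\gb=-1$. If $\ga\neq -1$, the first factor $D_0^{-1}\xi^\ga$ already lies in $z^{-1}\O[[z^{-1}]]$, and since $z^{-1}\C[[z^{-1}]]$ is an ideal in $\C[[z^{-1}]]$, multiplying by any number of subsequent factors of either type preserves membership in $z^{-1}\O[[z^{-1}]]$. If $\ga=-1$, the $n=0$ term is exactly $1/\nu$, which accounts for the stated constant; for $n\geq 1$ the input to each subsequent $D_0^{-1}$ is of the form $\xi^{-1-(i_1+\cdots+i_n)/k}$, whose exponent is strictly less than $-1$ because each $i_l\geq 1$. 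Hence case (ii) is never triggered again, case (i) applies at every post-initial step, and the tail of the Neumann series lies in $z^{-1}\O[[z^{-1}]]$. This yields the claimed membership $D_\xi^{-1}\xi^\ga\in z^{-1}\O[[z^{-1}]]$ for $\ga\neq -1$ and $D_\xi^{-1}\xi^{-1}\in \nu^{-1}+z^{-1}\O[[z^{-1}]]$.
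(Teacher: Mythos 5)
Your proposal is correct and is essentially the paper's argument in different packaging: the paper solves $D_\xi\f=\xi^\ga$ by the recursive ansatz $\f^\ga=\frac{1}{-z(\ga+1)+\nu}\xi^{\ga+1}+\sum_{j\geq 1}f^\ga_j\xi^{\ga+1-j/k}$ with $f^\ga_j\in z^{-1}\C[[z^{-1}]]$, which is exactly your Neumann series $\sum_{n\geq 0}(-1)^nD_0^{-1}(VD_0^{-1})^n\xi^\ga$ resummed, and both proofs extract the $z$-structure from the same two facts (the leading factor $1/(\nu-z(\ga+1))$ is $O(z^{-1})$ unless $\ga=-1$, in which case it is $\nu^{-1}$ and the resonant exponent is never hit again). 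Your explicit handling of intermediate resonances via the ideal property of $z^{-1}\C[[z^{-1}]]$, and of injectivity via the contraction estimate, only makes explicit what the paper leaves to "one checks".
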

\proof
We will construct the inverse of $D_\xi.$ The equation $D_\xi\f = \xi^\ga$
has a unique solution of the following form
\ben
\f^\ga  =  \frac{1}{-z(\ga+1)+ \nu}\, \xi^{\ga+1} + 
\sum_{j\geq 1}f^\ga_j \xi^{\ga+1-j/k}, 
\een 
where $f^\ga_j\in z^{-1}\C[[z^{-1}]].$   
We define $D_\xi^{-1}\xi^\ga:=\f^\ga$ and one checks 
that if $D_\xi^{-1}$ is extended by linearity, then 
$D_\xi^{-1}\f\in \O[[z^{\pm 1}]]$ for all $\f\in\O[[z^{\pm 1}]].$ The lemma follows. 
\qed

Assume that $\g$ is a series of the type \eqref{g}, i.e., the powers of $z$ and 
$\gl$ are bounded from above. According to \leref{D}, the operator 
$D^{-2}$ will decrease the highest degree of $z$ at least by $1$. On the other
hand note that $D$ decreases the highest degree of $\gl$ at least by $k$. 
Thus \eqref{zl_adic} holds.

\subsection{The symplectic action on $\f^\pm$}{\label{Sf}}

Let  $D_x$ be the differential operator \eqref{dop} corresponding
to $\phi=\d_x f_\tau$, i.e., 
\beq\label{D_x}
D_x = -z \frac{1}{k} x^{1-k} \d_x + \frac{1}{k}(\nu_0-\nu_1) x^{-k} + 
\frac{m}{k}(Qe^\tau)^m x^{-k-m}.
\eeq
We define a vector in the symplectic loop space $\H$ 
\ben
\f^\pm_\tau = \sum_{n\in\Z} I^{(n)}_\pm(\tau,x)(-z)^n,
\mbox{ s.t. }
(\f^\pm_\tau,{\bf 1}_\ga):= \pm \sum_{n\in\Z} D_x^n \(k^{-1}\phi_\ga(x) x^{-k}\),
\een
where $\ga \in \Z_k\sqcup\Z_m.$ 
Let us compute $I^{(0)}_\pm(\tau,x).$ Note that the terms in the above sum which
contribute to $I^{(0)}_\pm$ are the ones with $n\geq 0.$  The rest, according to
\leref{D}, do not contribute. Thus 
\beq\label{I0}
(I^{(0)}_\pm(\tau,x),{\bf 1}_\ga) =  \phi_\ga(x) \frac{\omega}{d f_\tau}.
\eeq 
Note that $D_x\f_\tau^\pm=\f_\tau^\pm $, thus by comparing the coefficients in front 
of $(-z)^{n+1}$ we get the following recursive relation:
\beq\label{In}
\d_x I^{(n)}(\tau,x) = (\d_x f_\tau)\,I^{(n+1)}(\tau,x).
\eeq
In particular, all coefficients $I^{(n)}$ are rational vector-valued functions
on $Y_\tau$ with possible poles only at the critical points of $f_\tau.$

In a neighborhood of $x=\infty$ we choose another (formal)
coordinate $\gl = x+ a_0 + a_1x^{-1}+\ldots$ such that $\gl$
is a formal solution to the equation
\beq\label{change}
\gl^k + \nu_1 \log \gl + \nu_0\log(Q/\gl) = f_\tau(x),
\eeq
where $f_\tau(x) = f(x,Qe^\tau/x).$

We will show that $S_\tau \f^\pm(\gl) = \f_\tau^\pm(x).$ It is enough to prove that 
\ben
\sum_{n\in\Z} D_x^n \(k^{-1}\phi_\ga(x)x^{-k}\) = 
\(\f_\tau^\pm(x),{\bf 1}_\ga \) = \(S_\tau \f^\pm(\gl),{\bf 1}_\ga \)=
\(\d_\ga J, \f^\pm(\gl)\),
\een
where $\ga \in \Z_k\sqcup\Z_m,$ $\d_\ga$ is the derivative along vector ${\bf 1}_\ga,$
and the last equality is deduced after comparing the definitions of the $J$-function $J$ and $S_\tau$ of $\CC_{k,m}$.

Let us compute $(\d_\ga J,\f^\pm).$
Assume first that $\ga = i'/k, 1\leq i'\leq k.$ Note that only the first sum 
in the formula for $z\d_\ga J$ (see \coref{dJ}) will contribute to the inner product.
Take the $d$-th summand in this sum.  It will
have a non-zero pairing only with those terms in $\f^\pm(\gl)$, (see \eqref{vop_0})
which correspond to $n\in \Z$ and $i,1\leq i\leq k$ s.t. 
\ben
-dm+i' + k-i = 0\,({\rm mod }\, k),\quad {\rm i.e.},\quad i= -dm+i'\,({\rm mod }\, k). 
\een
Pick  $n\in \Z$ in such a way that the product in $\f^\pm$ corresponding
to $n$ and $i$ cancels with the product in the $d$-th summand, i.e., 
$ -i/k + n = (dm-i')/k.$ On the other hand, note that the sum of all 
terms in $\f^\pm$ which have a non-zero pairing with the $d$-th summand 
can be written as follows: 
\ben
\sum_{n'\in\Z} \widetilde{D}_\gl^{n'} 
\frac{ \prod_{l=-\infty}^n (\nu + (-i/k+l)z)}
     { \prod_{l=-\infty}^0 (\nu + (-i/k+l)z)} \ 
\gl^{-(n+1)k+i} \ {\bf 1}_{(k-i)/k} ,
\een
where
\ben
\widetilde{D}_\gl = -z\frac{1}{k}\gl^{1-k}\d_\gl +\frac{1}{k}(\nu_0-\nu_1)\gl^{-k} .
\een
Thus the pairing between $z\d_\ga J$ and $\f^\pm$ is 
\ben
g_\ga e^{\tau\nu_0/z}\sum_{n'\in \Z} D_\gl^{n'} \sum_{d\geq 0} \gl^{-k+i'} 
\frac{1}{d!}\[z^{-1}\(Qe^{\tau}/\gl\)^m\]^d =
e^{\[\(Qe^{\tau}/\gl\)^m+\tau\nu_0\]z^{-1}} \sum_{n'\in \Z} D_\gl^{n'} g_\ga\gl^{-k+i'},
\een
where $D_\gl$ is given by formula \eqref{D_x}. We recall \prref{transformation}, the
change \eqref{change} and since $g_\ga\gl^{i'} = k^{-1}\phi_\ga(\gl)$ we get exactly 
what we wanted to prove. 
The case when $\ga=j/m,1\leq j\leq m$ is similar and will be omitted.

A similar statement holds for the other vertex operators $\overline{\Gamma}^\pm.$
Let $y=Qe^{t_N}/x$ be another coordinate on the complex circle.
Put $f_\tau(y) = f(Qe^\tau/y,y)$ and let
$D_y$ be the differential operator \eqref{dop} corresponding
to $\phi=\d_y f_\tau$, i.e., 
\beq\label{D_y}
D_y = -z \frac{1}{m} y^{1-m} \d_y + \frac{1}{m}(\nu_1-\nu_0) y^{-m} + 
\frac{k}{m}(Qe^\tau)^k y^{-k-m}.
\eeq
We define a vector in the symplectic loop space $\H$ 
\ben
\overline{\f}^\pm_\tau = \sum_{n\in\Z} \overline{I}^{(n)}_\pm(\tau,y)(-z)^n,
\mbox{ s.t. }
(\overline{\f}^\pm_\tau,{\bf 1}_\ga):= 
\pm \sum_{n\in\Z} D_y^n \(m^{-1}\phi_\ga(y) y^{-m}\).
\een
Just like before we prove that the 0-mode is given by
\beq\label{bar_I0}
(\overline{I}^{(0)}_\pm(\tau,y),{\bf 1}_\ga) =  \phi_\ga(y) \frac{\omega}{d f_\tau}.
\eeq 
and that the following recursive relation holds:
\beq\label{bar_In}
\d_y \overline{I}^{(n)}(\tau,y) = (\d_y f_\tau)\,\overline{I}^{(n+1)}(\tau,y).
\eeq
In a neighborhood of $y=\infty$ we choose a formal coordinate $\gl$ such that
\beq\label{change_y}
\gl^m +  \nu_0 \log \gl + \nu_1\log(Q/\gl) = f_\tau(y).
\eeq
Then $S_\tau\overline{\f}^{\,\pm}(\gl) = \overline{\f}_\tau^{\,\pm}(y).$


\sectionnew{From descendants to ancestors \label{d_to_a}  }


Let us describe the HQE which one obtains after conjugating the HQE in 
\thref{t1} by $S_\tau$ and then we will give the details of the computation.

An {\em asymptotical function} is, by definition, an expression 
\ben
\T = \exp \(\sum_{g=0}^\infty \ge^{2g-2}\T^{(g)}(\t;Q)\),
\een
where $\T^{(g)}$ are formal series in the sequence of vector 
variables $t_0,t_1,t_2,\ldots$  with coefficients in the Novikov
ring $\C[[Q]].$ Furthermore, $\T$ is called {\em tame} if
\ben
\left.\frac{\d}{
\d t_{k_1}^{\ga_1} \ldots \d t_{k_r}^{\ga_r} }\right|_{\t=0} 
\T^{(g)} = 0 \quad \mbox{whenever} \quad
k_1+k_2+\ldots +k_r > 3g-3+r,
\een
where $t_{k}^{\ga}$ are the coordinates of $t_k$ with respect to 
$\{{\bf 1}_\ga\}.$
We will say that a tame asymptotical function $\T$ satisfies the HQE below if for each
integer $r$ 
\beq\label{hqe_a}
\(\res_{x=0} + \res_{x=\infty}\)\ c_r(\tau,x)\, \(\Gamma_\tau^-\tensor\Gamma_\tau^+\)
\(\T\tensor \T\) dx = 0,
\eeq
where $\Gamma_\tau^\pm$ are the vertex operators $e^{\widehat{\f}_\tau^\pm}$ (see
subsection \ref{Sf}) and 
\ben
c_r(\tau,x)=x^{-r-1}\exp\(
(r-1)\, \frac{x^k}{\nu_0-\nu_1} + (r+1)\, \frac{(Qe^\tau/x)^m}{\nu_0-\nu_1} \).
\een
The Hirota quadratic equations \eqref{hqe_a} are interpreted as follows: 
switch to new variables $\x$ and $\y$ via the substitutions: 
$\q'=\x+\ge\y$, $\q''=\x-\ge\y$.
Due to the tameness 
(\cite{G1}, section 8, Proposition 6), after canceling the terms 
independent of $x$, the 1-form on the LHS of \eqref{hqe_a} 
expands into a power series in $\y$ and $\ge$, such that each coefficient 
depends polynomially on finitely many $I^{(n)}(\tau,x)$ and finitely 
many partial derivatives of $\T$. The residues in 
\eqref{hqe_a} are interpreted as the residues of 
meromorphic 1-forms.  

According to A. Givental (\cite{G2}, section 8), the asymptotical function 
$\A_\tau^{\rm Fr}:=\(\Psi R e^{U/z}\)\sphat \prod \D_{\rm pt}(\q^i)$ is tame.
Slightly abusing the notations we use $\tau\in \C$ to denote also the 
cohomology class $\tau\,p.$ The goal in this section is to prove the following theorem.
\begin{theorem}\label{t3}
$\D^{\rm Fr}$ satisfies \eqref{HQE} iff $\A_\tau^{\rm Fr}$ satisfies \eqref{hqe_a}. 
\end{theorem}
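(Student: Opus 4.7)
The plan is to apply the symplectic transformation $\widehat{S}_\tau\tensor\widehat{S}_\tau$ to the HQE \eqref{HQE} evaluated at $\D=\D^{\rm Fr}$. Since $\D^{\rm Fr}=C(\tau)\,\widehat{S}_\tau^{-1}\A_\tau^{\rm Fr}$ by \eqref{Dsc}, the conjugation turns the right-hand side $\D^{\rm Fr}\tensor\D^{\rm Fr}$ into an overall scalar multiple of $\A_\tau^{\rm Fr}\tensor\A_\tau^{\rm Fr}$; the resulting identity, up to cancellation of this scalar, is what I want to identify with \eqref{hqe_a} under the correspondence $r=n-l$, the remaining free integer being absorbed into an overall factor that cancels.

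The key technical input comes from subsection \ref{Sf}: $S_\tau\f^{\pm}(\gl)=\f_\tau^{\pm}(x)$ under the change \eqref{change}, and $S_\tau\overline{\f}^{\,\pm}(\gl)=\overline{\f}_\tau^{\,\pm}(y)$ under \eqref{change_y}. Combining these with the conjugation formula \eqref{s_conj} gives
\[\widehat{S}_\tau\,\Gamma^{\pm}(\gl)\,\widehat{S}_\tau^{-1}=e^{W^{\pm}/2}\,\Gamma_\tau^{\pm}(x),\qquad\widehat{S}_\tau\,\overline{\Gamma}^{\pm}(\gl)\,\widehat{S}_\tau^{-1}=e^{\overline{W}^{\pm}/2}\,\overline{\Gamma}_\tau^{\pm}(y),\]
where the quadratic form factors come from \eqref{conj_s}. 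The translation operators $e^{(n+1)\widehat{\bf 1}_{0/k}+n\widehat{\bf 1}_{0/m}}\tensor e^{l\widehat{\bf 1}_{0/k}+(l+1)\widehat{\bf 1}_{0/m}}$ are conjugated by the same formula: since ${\bf 1}_{0/k},{\bf 1}_{0/m}\in H$ are constants in $z$, the positive parts of $S_\tau{\bf 1}_{0/k},S_\tau{\bf 1}_{0/m}$ are themselves, while the $z^{-1}$-tails produce additional scalar exponentials when commuted past $\Gamma_\tau^{\pm}$ and $\overline{\Gamma}_\tau^{\pm}$. I then change the integration variable: $\gl\mapsto x$ via \eqref{change} in the first summand, sending $\res_{\gl=\infty}$ to $\res_{x=\infty}$, and $\gl\mapsto y$ via \eqref{change_y} in the second, sending $\res_{\gl=\infty}$ to $\res_{y=\infty}=\res_{x=0}$ through $y=Qe^\tau/x$. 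The two residues merge into the $\res_{x=\infty}+\res_{x=0}$ appearing in \eqref{hqe_a}, and the operators assume the required form $\Gamma_\tau^{-}\tensor\Gamma_\tau^{+}$.

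The principal obstacle is the scalar bookkeeping: one must verify that the accumulated exponential factors---the $\gl^{n-l}$ and $(Q/\gl)^{n-l}$ prefactors, the Jacobians $d\gl/\gl$ of the two changes of variable, the $e^{W^{\pm}/2}$ and $e^{\overline{W}^{\pm}/2}$ terms, and the scalars produced by conjugating the translations---collapse to exactly
\[c_r(\tau,x)=x^{-r-1}\exp\!\left((r-1)\frac{x^k}{\nu_0-\nu_1}+(r+1)\frac{(Qe^\tau/x)^m}{\nu_0-\nu_1}\right),\qquad r=n-l,\]
while any residual dependence on $n+l$ factors into an overall scalar that cancels. The asymmetry of the coefficients $r\mp 1$ in $c_r$ is expected to originate from the asymmetry of the shifts $(n+1,n)$ and $(l,l+1)$ combined with the two distinct changes of variable (one based at $x=\infty$, the other at $x=0$), reflecting the two-pole structure of $f_\tau$. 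Tameness of $\A_\tau^{\rm Fr}$, established by Givental in \cite{G2}, section 8, and cited above, ensures both HQE are well-defined as formal series in $\y,\ge$, so the equivalence is rigorous.
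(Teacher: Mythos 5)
Your strategy coincides with the paper's: conjugate the descendant HQE by $\widehat S_\tau\tensor\widehat S_\tau$, use $S_\tau\f^\pm(\gl)=\f_\tau^\pm(x)$, $S_\tau\overline{\f}^{\,\pm}(\gl)=\overline{\f}_\tau^{\,\pm}(y)$ together with \eqref{s_conj}, change variables $\gl\mapsto x$ via \eqref{change} and $\gl\mapsto y=Qe^\tau/x$ via \eqref{change_y}, and merge the two residues into $\res_{x=\infty}+\res_{x=0}$. But what you set aside as ``scalar bookkeeping'' is precisely the content of the proof, and leaving it unverified is a genuine gap. Concretely, three computations are missing. First, the quadratic forms $W=W_\tau((\f^+)_+,(\f^+)_+)$ and $\overline{W}$ must be evaluated in closed form (formulas \eqref{w}--\eqref{w_bar}); this is done by differentiating in $x$ with the recursion $\d_x I_+^{(n)}=(\d_x f_\tau)I_+^{(n+1)}$, recognizing the primitives \eqref{pr}, and fixing the constants $C=(S_1{\bf 1}_{0/k},{\bf 1}_{0/k})$, $\overline{C}=(S_1{\bf 1}_{0/m},{\bf 1}_{0/m})$ at $x=\infty$. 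Second, the conjugated translation operators involve $\phi_{0/k}=S_\tau{\bf 1}_{0/k}$ and $\phi_{0/m}=S_\tau{\bf 1}_{0/m}$, which are no longer $z$-constant; cancelling them (legitimate exactly because of tameness, which guarantees the $\gl$-dependent coefficients are residues of rational forms) produces commutation factors $e^{(l-n-1)\gl^k/(\nu_0-\nu_1)}$ and $e^{(n-l-1)\gl^m/(\nu_1-\nu_0)}$, computed from $\Omega({\bf 1}_\ga,\f^\pm)$ using that $S_\tau$ is symplectic. These are $\gl$-dependent, not mere constants, and together with $e^{W}$, $e^{\overline{W}}$, the prefactors $\gl^{n-l}$, $(Q/\gl)^{n-l}$ and the Jacobians of the two coordinate changes they are what assemble into $c_r(\tau,x)$; your phrase ``additional scalar exponentials'' glosses over this.

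Third, and most seriously: after the two changes of variables the two residues carry the constant prefactors $e^{C+(r-1)\tau\nu_0/(\nu_0-\nu_1)}$ and $e^{\overline{C}+(r+1)\tau\nu_0/(\nu_0-\nu_1)-\tau}$, and the equivalence with \eqref{hqe_a} holds only if these agree, i.e. only if $C-\overline{C}=\tau(\nu_0+\nu_1)/(\nu_0-\nu_1)$. This is not a residual ``dependence on $n+l$ that cancels'': $C$ and $\overline{C}$ are genuinely different constants, read off from the $z^{-1}$-coefficients of the derivatives of the $J$-function in \coref{dJ} (and they even acquire extra terms when $k=m$), so the matching is a nontrivial consistency check requiring that explicit input. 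Without these three verifications your argument shows only that the conjugated HQE has the general shape of \eqref{hqe_a} with some coefficient function in place of $c_r$, which does not yield the stated equivalence.
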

\proof
We recall formula \eqref{s_conj} and the main result in subsection \ref{Sf}: 
\ben
\widehat S_\tau \Gamma^\pm\widehat S_\tau^{-1}  = 
e^{W/2}\Gamma_\tau^\pm,\quad
\widehat S_\tau \overline{\Gamma}^\pm\widehat S_\tau^{-1}  = 
e^{\overline{W}/2}\overline{\Gamma}_\tau^\pm,
\een
where 
\ben
W = W_\tau((\f^+(\gl))_+,(\f^+(\gl))_+),\quad
\overline{W} = W_\tau((\overline{\f}^+(\gl))_+,(\overline{\f}^+(\gl))_+).
\een
We will prove that 
\beqa\label{w}&&
W = C + \frac{2}{\nu_0-\nu_1} \(Qe^\tau/x\)^m + 
\log\frac{\gl(k\gl^{k} +\nu_1-\nu_0)}{x^2\d_xf_\tau},  \\
&& \label{w_bar}
\overline{W} 
= \overline{C}+ \frac{2}{\nu_1-\nu_0} \(Qe^\tau/y\)^k + 
\log\frac{\gl(m\gl^{m} +\nu_0-\nu_1)}{y^2\d_yf_\tau}  \,
\eeqa  
where 
$C=\(S_1{\bf 1}_{0/k},{\bf 1}_{0/k}\),$ 
$\overline{C}=\(S_1 {\bf 1}_{0/m},{\bf 1}_{0/m}\),$ and $x$ and $y$ are
related to $\gl$ respectively via \eqref{change} and \eqref{change_y}. 
It is sufficient to establish the first formula, because for the second one 
one just interchange $k$ with $m,$ $\nu_0$ with $\nu_1,$ and $x$ with $y.$ 
Using that $\d_x I_+^{(k)} = (\d_x f_\tau)I_+^{(k+1)}$ we get
{\allowdisplaybreaks
\ben
\d_xW & = &
\d_x W_\tau((\f^+)_+,(\f^+)_+) = 
\d_x\ \sum_{n,l\geq 0} (W_{nl}I_+^{(l)},I_+^{(n)})(-1)^{n+l} = \\
& = &
-\sum_{n,l\geq 0} \[([W_{n,l-1}+W_{n-1,l}]I_+^{(l)},I_+^{(n)}) \]
\(\d_xf_\tau\)  (-1)^{n+l}= \\
& = &
-\sum_{n,l\geq 0} \[ (S_l(-1)^lI_+^{(l)},S_n(-1)^n I_+^{(n)}) -
                    ( I_+^{(0)}(\gl)   ,I_+^{(0)}(\gl)) \]
\(\d_xf_\tau \)= \\
& = & 
\[ -\(        I_+^{(0)}(\tau,x),I_+^{(0)}(\tau,x)    \)+ 
    \(        I_+^{(0)}(\gl)   ,I_+^{(0)}(\gl)       \)    \]
    \(\d_xf_\tau \),
\een }
where $\gl$ and $x$ are related via equation \eqref{change}. The two 1-forms $\(I_+^{(0)}(\tau,x),I_+^{(0)}(\tau,x) \)df_\tau$ and $\(I_+^{(0)}(\gl)   ,I_+^{(0)}(\gl)\)df_\tau$ are equal respectively to 
\ben
\(k\nu^{-1}x^{2k}+m\overline{\nu}^{-1}(Qe^\tau/x)^{2m} +k(k-1)x^k+m(m-1)(Qe^\tau/x)^{m}\)
\frac{dx}{x^2f_\tau'}
\een
and
\ben
\((k-1)\gl^k + \nu^{-1}\gl^{2k}\)\frac{d\gl}{\gl(\gl^k-\nu)}.
\een
One can check that primitives of these two 1-forms are given respectively by
\beq\label{pr}
\log\(x^2f'_\tau\) + \frac{f_\tau(x)-2\(Qe^\tau/x\)^m}{\nu_0-\nu_1}\,
\mbox{ and }
\frac{\gl^{k}}{\nu_0-\nu_1}+\log \(\gl^k-\nu \).
\eeq
In order to fix the integration constant $C$, note that $(\f^+)_+ ={\bf 1}_{0/k}$ for
$x=\infty.$ Thus 
$$
C = W|_{x=\infty} = W_\tau({\bf 1}_{0/k} ,{\bf 1}_{0/k} ) = 
\(W_{0,0}{\bf 1}_{0/k},{\bf 1}_{0/k}\) = \(S_1{\bf 1}_{0/k},{\bf 1}_{0/k}\).
$$

The rest of the proof follows the argument in section 3.5 in \cite{M1}. 
Since $\D=\widehat{S_\tau}^{-1}\A_\tau$ up to a prefactor, we get that $\D$ satisfies
\eqref{HQE} iff $\A_\tau$ satisfies the following HQE:
\beqa\label{hqe_at}
&& \res_{\gl=\infty} \ 
\frac{d\gl}{\gl}
\left(
\gl^{n-l}e^{W}\Gamma_\tau^{-}\tensor\Gamma^{+}_\tau -
(Q/\gl)^{n-l}e^{\bW}\overline{\Gamma}_\tau^{+}\tensor\overline{\Gamma}_\tau^{-}
\right) \\ \notag
&&
\(e^{(n+1)\hat\phi_{0/k}(\tau,z)+n\hat\phi_{0/m}(\tau,z)}\tensor
  e^{l\hat\phi_{0/k}(\tau,z)+(l+1)\hat\phi_{0/m}(\tau,z)} \) 
\(\A_\tau\tensor\A_\tau\) = 0,
\eeqa
where $\phi_{0/k}=S_\tau{\bf 1}_{0/k}$ and $\phi_{0/m}=S_\tau{\bf 1}_{0/m}.$
This is the place where we will use that $\A_\tau$ is a tame asymptotical function.
The tameness condition implies that after the substitutions 
$
\ge\y = (\q'-\q'')/2$ and ${\x} = (\q'+\q'')/2,
$
and the cancellation of terms that do not depend on $\gl$, the 1-form
in \eqref{hqe_at} becomes a formal series in $\y$ and $\ge$
with coefficients depending polynomially on finitely many of the modes 
$I_{\pm}^{(n)}$ and $\overline{I}_\pm^{(n)}$ and finitely many
partial derivatives of $\overline\F_\tau(\x):=\log \A_\tau.$ 
Furthermore, if we choose two new (formal) coordinates 
$x$ and $y$ in a neighborhood of $\gl=\infty$ according to \eqref{change}
and \eqref{change_y} then the coefficients 
$I_\pm^{(n)}$ and $\overline{I}_\pm^{(n)}$ become rational functions 
respectively in $x$ and $y$. 
Thus the LHS of \eqref{hqe_at} is a formal series in $\x,\y$, and $\ge$ 
whose coefficients are residues of rational 1-forms. 
In particular, the action of the translation operator 
\ben
e^{-(n+1)\hat\phi_{0/k}(\tau,z)-n\hat\phi_{0/m}(\tau,z) }\tensor
e^{-l\hat\phi_{0/k}(\tau,z)-(l+1)\hat\phi_{0/m}(\tau,z) }
\een
on \eqref{hqe_at} is well defined, i.e., we can cancel the 
corresponding term in \eqref{hqe_at}. However, since
$e^{\hat f}e^{\hat g} = e^{\Omega(f,g)}e^{\hat g}e^{\hat f},$
the two vertex-operator terms in \eqref{hqe_at} will
gain the following commutation factors: 
\ben
e^{\Omega\( -(n+1)\phi_{0/k}-n\phi_{0/m},\f_\tau^{-}\) + 
\Omega\( -l\phi_{0/k}-(l+1)\phi_{0/m},\f_\tau^{+}\)} = 
e^{(l-n-1)\frac{\gl^k}{\nu_0-\nu_1} }
\een  
and
\ben
e^{
\Omega\(-(n+1)\phi_{0/k}- n\phi_{0/m},\overline{\f}_\tau^+ \)+  
\Omega\(-l\phi_{0/k}-(l+1)\phi_{0/m},\overline{\f}_\tau^- \)}=
e^{(n-l-1)\frac{\gl^m}{\nu_1-\nu_0}}, 
\een  
where we used that $S_\tau$ is a symplectic transformation, thus 
$$\Omega(S_\tau{\bf 1}_\ga,\f_\tau^\pm )=
 \Omega(S_\tau{\bf 1}_\ga,S_\tau\f^\pm )=\Omega({\bf 1}_\ga,\f^\pm ),
\quad \ga\in\Z_k\sqcup\Z_m,$$
and the later is easy to compute from formula \eqref{vop_0}.
Thus \eqref{hqe_at} is equivalent to the following HQE:
\ben
&&
\res_{\gl=\infty} 
\frac{d\gl}{\gl}\times \\
&&
\left(
\gl^{-r} e^{ W +(r-1)\frac{\gl^k}{\nu_0-\nu_1}  }
\Gamma_\tau^-\tensor\Gamma_\tau^+ - 
\(\frac{Q}{\gl}\)^{-r}e^{\bW-(r+1)\frac{\gl^m}{\nu_1-\nu_0} }
\overline{\Gamma}_\tau^+\tensor \overline{\Gamma}_\tau^{-}
\right) 
 \(\A_\tau\tensor\A_\tau \)= 0,
\een
where we put $r=l-n.$ 
We write the above residue sum  as a difference of two 
residues. In the first one we change from $\gl$ to $x$ according to \eqref{change}
and we recall formula \eqref{w}. After a short computation we get:
\beqa
&&\label{x_change} 
\res_{x=\infty}e^{C+(r-1)\tau\frac{\nu_0}{\nu_0-\nu_1}} c_r(\tau,x)
\(\Gamma_\tau^-\tensor\Gamma_\tau^+\)
\(\A_\tau\tensor\A_\tau \) d x ,
\eeqa
where $c_r(\tau,x)$ is the same as in \eqref{hqe_a}. In the second residue
we change $\gl$ to $y$ according to \eqref{change_y} and we recall
formula \eqref{w_bar}:
\ben
\res_{y=\infty}
e^{\overline{C}+(r+1)\tau\frac{\nu_0}{\nu_0-\nu_1}}c_r(\tau,Qe^\tau/y) 
\(\overline{\Gamma}_\tau^+\tensor\overline{\Gamma}_\tau^- \)
\(\A_\tau\tensor\A_\tau \)
Q\frac{dy}{y^2} .
\een
Note that if we change $y=Qe^\tau/x$ then 
$\overline{\Gamma}_\tau^\pm = \Gamma_\tau^\mp,$ thus the last residue transforms into
\beq\label{y_change}
-\res_{x=0}
e^{\overline{C}+(r+1)\tau\frac{\nu_0}{\nu_0-\nu_1}-\tau}c_r(\tau,x) 
\(\Gamma_\tau^-\tensor\Gamma_\tau^+ \)
\(\A_\tau\tensor\A_\tau \)
dx .
\eeq
We compare \eqref{x_change} and \eqref{y_change} and we see that in 
order to finish the proof of the theorem, we just need to verify that 
\ben
C+(r-1)\frac{\tau\nu_0}{\nu_0-\nu_1}=
\overline{C}+(r+1)\frac{\tau\nu_0}{\nu_0-\nu_1}-\tau,\mbox{ i.e. } 
C-\overline{C} = \tau\,\frac{\nu_0+\nu_1}{\nu_0-\nu_1}.
\een
On the other hand we know that $C=(S_1{\bf 1}_{0/k},{\bf 1}_{0/k})$ which is
equal to the coefficient in front of $z^{-1}$ in $(\d_{0/k}J,{\bf 1}_{0/k}).$ 
The later can be computed from \coref{dJ}. The answer is the following:
\ben
C =
\begin{cases}
\tau\nu_0/(\nu_0-\nu_1) & \mbox{ if } k\neq m,\\
\tau\nu_0/(\nu_0-\nu_1)+ k(Qe^\tau)^k/(\nu_0-\nu_1)^2 & \mbox{ if } k=m.
\end{cases}
\een 
Similarly,
\ben
\overline{C} =
\begin{cases}
\tau\nu_1/(\nu_1-\nu_0) & \mbox{ if } k\neq m,\\
\tau\nu_1/(\nu_1-\nu_0)+ m(Qe^\tau)^m/(\nu_1-\nu_0)^2 & \mbox{ if } k=m.
\end{cases}
\een
The theorem follows. \qed


\sectionnew{From ancestors to KdV \label{a_to_kdv}}


In this section we prove that the ancestor potential $\A_\tau^{\rm Fr}$ satisfies
\eqref{hqe_a}. In view of \thref{t3} this would imply \thref{t1}. Note that the vertex
operators $\Gamma_\tau^\pm$ have poles only at $x=0,\infty,$ or $\xi_i,$ $1\leq i\leq k+m$,
where the later are the critical points of $f_\tau.$ Thus it is enough to prove that
the residue of the 1-form in \eqref{hqe_a} at each critical point $\xi_i$ is $0.$ 

Let us fix a critical point $\xi_i$ and denote by $u_i=f_\tau(\xi_i)$ the corresponding 
critical value. The function $f_\tau$ induces a map between a neighborhood of $x=\xi_i$
and a neighborhood of $\Lambda=u_i$ which is a double covering branched at $u_i.$ 
We pick a reference point $\Lambda_0$ in a neighborhood of $u_i$ and denote by 
$x_\pm(\Lambda_0)$ the two points which cover $\Lambda_0.$ Finally, let us denote 
by $x_\pm(\Lambda)$ the points covering $\Lambda$. Note that $x_\pm(\Lambda)$ depend 
on a choice of a path $C$ between $\Lambda_0$ and $\Lambda$ avoiding $u_i.$ 
On the other hand, for any function $g(x)$ meromorphic in a neighborhood of $\xi_i$
we have 
\ben
\res_{x=\xi_i} g(x)dx= \res_{\Lambda=u^i} 
\sum_{\pm}g(x_\pm(\Lambda))\frac{\d x_\pm}{\d\Lambda}(\Lambda)d\Lambda.
\een
Thus the vanishing of the residue at $\xi_i$ of \eqref{hqe_a} is equivalent to:
\beq\label{res_xi}
\res_{\Lambda=u_i}
\left\{   d\Lambda \sum_{\pm} 
\frac{e^{\frac{2}{\nu_0-\nu_1}(Qe^\tau/x_\pm)^m}  }
     {x_\pm^2f_\tau'(x_\pm)}
(\Gamma_\tau^{\gb_\pm}\tensor\Gamma_\tau^{-\gb_\pm})(\A_\tau\tensor\A_\tau)
\right\} 
e^{\frac{r-1}{\nu_0-\nu_1}\Lambda} = 0 ,
\eeq
where $\gb_\pm$ are the one point cycles $[x_\pm(\Lambda)]\in  H^0(f_\tau^{-1}(\Lambda);\Z)$ and
the vertex operators can be described as follows: 
\ben
\f_\tau^{\pm\gb}(\Lambda) = - \int_\gb \f_\tau^\pm(x),\quad 
\Gamma_\tau^{\pm\gb}= \(e^{\f_\tau^{\pm\gb} }\)\sphat, \quad 
\gb\in H^0(f_\tau^{-1}(\Lambda);\Z).
\een
We will prove that the 1-form in the $\{ \ \}$-brackets in \eqref{res_xi} is analytic in 
$\Lambda$. In particular this would imply that the residue \eqref{res_xi} is 0. 
The proof follows closely the argument in \cite{G2}. 

Note that the vector-valued function $I_{\gb_+-\gb_-}^{(0)}(\tau,\Lambda)$ can be expanded
in a neighborhood of $\Lambda=u_i$ as follows
\beq\label{I_0}
I_{(\gb_+-\gb_-)/2}^{(0)}(\tau,\Lambda) = 
\frac{1}{\sqrt{2(\Lambda-u^i)}}\, (e_i + O(\Lambda-u^i)),
\eeq
where the standard vector $e_i$ in $\C^{k+m}$ is identified via $\Psi$ with
a vector in $H$ and the value of $\sqrt{2(\Lambda-u_i)}$
is fixed as follows. Choose a path $C_0$ from $u_i+1$ to $\Lambda_0$, then the
translation of $C\circ C_0$ along vector $-u_i$ is a path from 1 to $\Lambda - u_i.$
If we choose $C_0$ arbitrary then \eqref{I_0} is correct up to a sign, so if necessary
change $C_0$ in order to achieve equality. 
\begin{figure}
\centering
\includegraphics{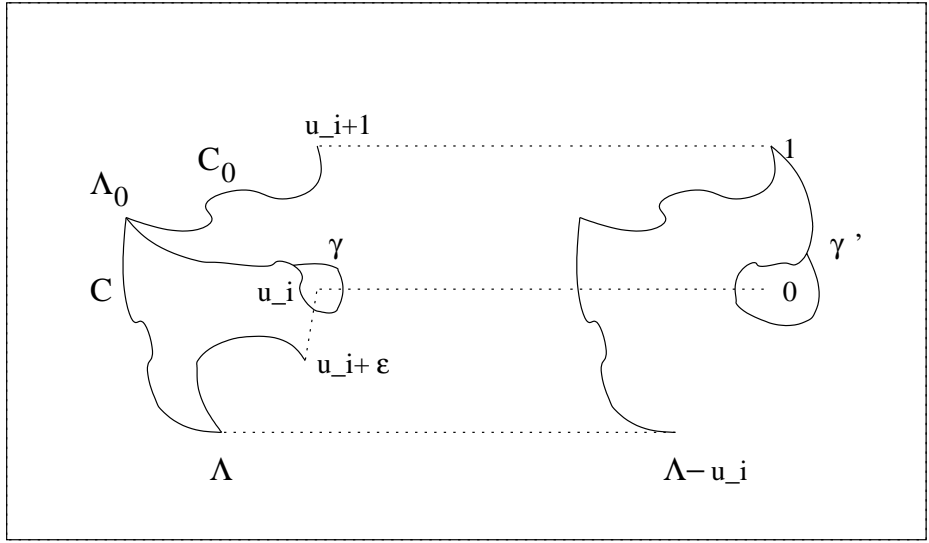}
\label{paths}
\caption{Integration paths.}
\end{figure}
We introduce also a 1-form $\W_{\gb',\gb''}$, called {\em the phase form}, defined 
as follows:
\ben
\W_{\gb',\gb''} = -\(I_{\gb'}^{(0)}(\tau,\Lambda),I_{\gb''}^{(0)}(\tau,\Lambda) \)d\Lambda,\quad
\gb',\gb''\in H_0(f_\tau^{-1}(\Lambda);\QQ).
\een

\begin{lemmaa}
The vertex operators $\Gamma_\tau^{\gb_\pm}$ and $\Gamma_\tau^{-\gb_\pm}$ factor as follows:
\ben
\Gamma_\tau^{\gb_\pm} = e^{\pm K}\,
\Gamma_\tau^{(\gb_\pm+\gb_\mp)/2} \,
\Gamma_\tau^{\pm(\gb_\pm-\gb_\mp)/2}, \quad
\Gamma_\tau^{-\gb_\pm} = e^{\pm K}\,
\Gamma_\tau^{-(\gb_\pm+\gb_\mp)/2}\, 
\Gamma_\tau^{\mp(\gb_\pm-\gb_\mp)/2},
\een
where
\ben
K=\int_\Lambda^{u_i}
\W_{(\gb_+-\gb_-)/2,(\gb_++\gb_-)/2}.
\een
\end{lemmaa}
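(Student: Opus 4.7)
The plan is to derive the two factorizations from the basic multiplication law for quantized exponentials and then identify the resulting scalar cocycle with the line integral $K$ of the phase form.

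First I record the product rule
\[
\widehat{e^f}\,\widehat{e^g} \;=\; e^{\Omega(f_-,\,g_+)}\,\widehat{e^{f+g}},
\]
which follows from the normal-ordering convention $\widehat{e^h}=e^{\hat h_-}e^{\hat h_+}$ and the Heisenberg commutator $[\hat a,\hat b]=\Omega(a,b)$ for $a\in\H_-$, $b\in\H_+$ (a scalar). Applying it with $f=\f_\tau^{(\gb_++\gb_-)/2}$ and $g=\pm\f_\tau^{(\gb_+-\gb_-)/2}$, and using the trivial splitting $\gb_\pm=(\gb_++\gb_-)/2\pm(\gb_+-\gb_-)/2$, I obtain the first identity up to the scalar cocycle
\[
c_\pm\;:=\;\pm\,\Omega\bigl((\f_\tau^{(\gb_++\gb_-)/2})_-,\,(\f_\tau^{(\gb_+-\gb_-)/2})_+\bigr).
\]
The second identity follows from the same argument applied after replacing $f,g$ by $-f,-g$, using $\f_\tau^{-\gb}=-\f_\tau^{\gb}$; the apparent sign asymmetry with the first identity is accounted for by the extra commutator picked up when the two quantized exponentials are written in the stated order.

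Next I identify $c_\pm$ with $\pm K$. Expanding $\f_\tau^\gb(\Lambda)=\sum_n I^{(n)}_\gb(\tau,\Lambda)(-z)^n$ and writing $\Omega$ in the Darboux basis, $c_\pm$ becomes an infinite series of Poincar\'e pairings $(I^{(n)}_{(\gb_++\gb_-)/2},\,I^{(-n-1)}_{(\gb_+-\gb_-)/2})$ for $n\geq 0$. The recursion $\d_\Lambda I^{(n)}_\gb=I^{(n+1)}_\gb$, obtained by integrating $\d_x I^{(n)}=(\d_x f_\tau)I^{(n+1)}$ over $\gb$ and using $d\Lambda=(\d_x f_\tau)\,dx$ on $Y_\tau$, causes this series to telescope to a primitive of $-\W_{(\gb_+-\gb_-)/2,(\gb_++\gb_-)/2}$. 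The endpoint is then forced by the requirement that $K$ vanish at $\Lambda=u_i$, which gives $c_\pm=\pm\int_\Lambda^{u_i}\W_{(\gb_+-\gb_-)/2,(\gb_++\gb_-)/2}=\pm K$.

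Third, I verify that $K$ is analytic near $\Lambda=u_i$. By the expansion \eqref{I_0} just before the lemma, $I^{(0)}_{(\gb_+-\gb_-)/2}$ has a $(2(\Lambda-u_i))^{-1/2}$ singularity while $I^{(0)}_{(\gb_++\gb_-)/2}$ is regular at $u_i$; in the ramification coordinate $s=\sqrt{2(\Lambda-u_i)}$ we have $d\Lambda=2s\,ds$, so $\W_{(\gb_+-\gb_-)/2,(\gb_++\gb_-)/2}$ is a regular 1-form in $s$ and $K$ is analytic with $K|_{s=0}=0$. This analyticity is what allows $e^{\pm K}$ to be treated as an ordinary analytic factor in the residue computation that follows the lemma.

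The main technical obstacle is the telescoping in the second step: it requires careful bookkeeping of the Darboux pairing across the boundary between $\H_+$ and $\H_-$ together with the sign conventions for the Heisenberg commutator. The argument parallels Givental's treatment of the analogous ancestor factorization for K\"ahler manifolds (\cite{G2}, section 8), where an identical phase form appears; we essentially follow that computation after substituting the equivariant mirror data $(\C^*,f_\tau,dx/x)$ of $\CC_{k,m}$ in place of the original Landau--Ginzburg data.
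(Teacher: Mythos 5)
The paper itself offers no argument here: its ``proof'' is the single line citing Proposition 4 of Section 7 of \cite{G2}, so what you have written is a reconstruction of Givental's proof rather than an alternative to anything in the text. Your skeleton is the right one, and it is essentially that of the cited proposition: use linearity of $\gb\mapsto\f_\tau^{\gb}$ to split $\gb_\pm$ into $(\gb_++\gb_-)/2$ and $\pm(\gb_+-\gb_-)/2$, pick up a scalar cocycle from normal ordering, show via $\d_\Lambda I^{(n)}_\gb=I^{(n+1)}_\gb$ that the $\Lambda$-derivative of the cocycle telescopes to the phase form, and pin the constant of integration down at $\Lambda=u_i$ (this is also why $K$, and hence $e^{\pm K}$, is analytic at $u_i$, as you observe). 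Incidentally, the relevant statement is in Section 7 of \cite{G2}, not Section 8.

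There is, however, a concrete error in the execution. With the convention $\widehat{e^h}=e^{\hat h_-}e^{\hat h_+}$, the product rule is $\widehat{e^f}\,\widehat{e^g}=e^{\Omega(f_+,g_-)}\,\widehat{e^{f+g}}$: the commutator created by normal ordering is $[\hat f_+,\hat g_-]$, i.e.\ the nonnegative modes of the \emph{left} exponent against the negative modes of the \emph{right} one, not $\Omega(f_-,g_+)$ as you state. This is not a cosmetic sign issue, because the two pairings behave very differently near $u_i$. Your displayed $c_\pm$ pairs the ($\Lambda$-regular) negative modes of $\f_\tau^{(\gb_++\gb_-)/2}$ with the nonnegative modes of $\f_\tau^{(\gb_+-\gb_-)/2}$, which blow up like $(\Lambda-u_i)^{-n-1/2}$; for that quantity the normalization ``$K$ vanishes at $\Lambda=u_i$'' is simply unavailable. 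The correct cocycle, $\Omega\bigl((\f_\tau^{(\gb_++\gb_-)/2})_+,(\f_\tau^{(\gb_+-\gb_-)/2})_-\bigr)$ --- which is in fact the series of pairings $(I^{(n)}_{(\gb_++\gb_-)/2},I^{(-n-1)}_{(\gb_+-\gb_-)/2})$, $n\geq 0$, that you yourself use two sentences later, so your step two is internally inconsistent --- has terms of the form (regular)$\times O\bigl((\Lambda-u_i)^{n+1/2}\bigr)$, and only for it does vanishing at $u_i$ hold. Moreover, that vanishing is not a ``requirement'' one may impose: it is a property of a specific scalar that must be verified, and since the sum is infinite you still owe an argument (term-by-term limits in the asymptotic/formal sense in which these series are defined) that the limit may be taken inside the sum; this, together with the telescoping computation you only assert, is precisely the content of the proposition in \cite{G2} that the paper invokes. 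Fix the product rule, carry the correct pairing through the telescoping (whose sign then matches $dK=-\W_{(\gb_+-\gb_-)/2,(\gb_++\gb_-)/2}$ along the path from $\Lambda$ to $u_i$), and address the convergence point, and your argument becomes a complete proof.
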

\proof This is Proposition 4 from \cite{G2}, section 7. \qed 

\begin{lemmab}
For $\Lambda$ near the critical value $u_i$, the following formula holds: 
\beq\label{2toda_kdv}
\Gamma_\tau^{\pm(\gb_+-\gb_-)/2}\(\Psi R e^{U/z}\)\sphat  = 
e^{(W_i+w_i)/2}\(\Psi R e^{U/z}\)\sphat \Gamma^{\pm},
\eeq
where
\ben
&&
W_i = \lim_{\ge\rightarrow 0} \int_\Lambda^{u_i+\ge}\left(
\W_{(\gb_+-\gb_-)/2,(\gb_+-\gb_-)/2}+
\frac{d\xi}{2(\xi-u_i)}\right)\ ,\
w_i = -\int_{\Lambda-u_i}^\Lambda \frac{d\xi}{2\xi} , \\
&&
\Gamma^\pm = \exp\(\sum_{n\in\Z} (-z\d_\Lambda)^n \frac{e_i}{\pm\sqrt{ 2\Lambda}}\).
\een 
\end{lemmab}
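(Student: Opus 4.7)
The plan is to use Givental's conjugation identity \eqref{s_conj} (formula (17) in \cite{G1}), suitably adapted to the operator $\Psi R e^{U/z}$. Roughly, if $M=\Psi R e^{U/z}$, then conjugation of $e^{\hat\f}$ by $\widehat M$ produces a quadratic-form prefactor $e^{V(\f_+,\f_+)/2}$ multiplied by $e^{(M^{-1}\f)\sphat}$ (up to the usual sign conventions). Applied to $\f=\f_\tau^{\pm(\gb_+-\gb_-)/2}$, the identity \eqref{2toda_kdv} is equivalent to the pair of assertions: (a) $(\Psi Re^{U/z})^{-1}\f_\tau^{\pm(\gb_+-\gb_-)/2}(\Lambda)$ equals the standard KdV period vector $\sum_{n\in\Z}(-z\d_\Lambda)^n \bigl(e_i/(\pm\sqrt{2\Lambda})\bigr)$ in the $i$-th channel, after recentering $\Lambda\mapsto \Lambda-u_i$; and (b) the accompanying quadratic form equals $(W_i+w_i)/2$.

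For (a), I would use stationary phase on the oscillating integrals defining $\f_\tau^\pm$. By \eqref{I0} and the recursion \eqref{In}, the period $\f_\tau^{(\gb_+-\gb_-)/2}$ near $\Lambda=u_i$ has an asymptotic of the form $\mathfrak{J}\cdot\bigl(e_i/\sqrt{2(\Lambda-u_i)}+\text{analytic in }\Lambda\bigr)$, where the asymptotic fundamental solution $\mathfrak{J}\sim\Psi Re^{U/z}$ was computed in \eqref{J_as}. Pulling $(\Psi Re^{U/z})^{-1}$ past the period selects the $i$-th canonical coordinate channel, and the analytic (in $\Lambda-u_i$) contribution is swallowed by the other cycle $(\gb_++\gb_-)/2$, which is treated separately in Lemma A. The translation $\Lambda\mapsto\Lambda-u_i$ removes the $e^{u_i/z}$ factor contained in $e^{U/z}$ on the $i$-th channel and produces precisely $\sqrt{2\Lambda}$ in place of $\sqrt{2(\Lambda-u_i)}$, which is what appears in the definition of $\Gamma^{\pm}$.

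For (b), the quadratic form arising from the conjugation formula equals (up to sign) the self-pairing of $\f_\tau^{(\gb_+-\gb_-)/2}$ under the propagator $V$, which by the standard computation (see e.g.\ \cite{G2}, section 7) can be rewritten as
\beq\notag
\int_\Lambda^\bullet \W_{(\gb_+-\gb_-)/2,(\gb_+-\gb_-)/2},
\eeq
i.e.\ an integral of the phase form along a path terminating at $u_i$. This integral diverges logarithmically at $u_i$ because of the $1/(2(\xi-u_i))$ singularity of the propagator there; on the KdV side the corresponding self-pairing has a matching divergence built from $1/(2\xi)$ (the propagator of the point theory). The regularizations $W_i$ (subtracting $d\xi/(2(\xi-u_i))$ and integrating to $u_i+\ge$, $\ge\to 0$) and $w_i$ (subtracting the same singularity but centered at $0$ and compensating for the shift $\Lambda\mapsto \Lambda-u_i$) are exactly the finite parts that survive on each side, so their sum $(W_i+w_i)/2$ is the quadratic form prefactor that remains after cancellation.

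The main obstacle is the careful bookkeeping of the divergent contributions in the propagator integrals: both the orbifold-side self-pairing of $\f_\tau^{\pm(\gb_+-\gb_-)/2}$ and the point-side self-pairing of $\sum_n(-z\d_\Lambda)^n e_i/(\pm\sqrt{2\Lambda})$ are logarithmically divergent at $u_i$ (resp.\ $0$), and the entire content of the lemma lies in showing that these divergences cancel between the two sides so that the regularized difference is exactly $W_i+w_i$. This is essentially a local computation near a nondegenerate critical point of $f_\tau$; since the orbifold Frobenius structure on $H$ is semi-simple at $\tau=t_Np$ with critical points of the same type as in Givental's setting, the argument of \cite{G2}, Proposition 5 in section 7, transfers verbatim.
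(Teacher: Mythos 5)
Your outline is essentially the argument behind the result the paper invokes: the paper's entire proof of Lemma B is the citation ``This is Theorem 3 from \cite{G2}'', and your two-step reduction (identification of the $R$-conjugated period of the anti-invariant cycle with the $A_1$/KdV vertex operator exponent via the stationary phase asymptotics $\mathfrak{J}\sim\Psi Re^{U/z}$, together with the matching of the regularized phase-form integrals giving $W_i$ and $w_i$) is precisely the content of the proof of that theorem. Only two small points of bookkeeping: the relevant statement in \cite{G2} is Theorem 3 (Proposition 4 of section 7 is what yields Lemma A), and for conjugation by an $R$-type operator the quadratic prefactor is evaluated on $\f_-$ rather than $\f_+$ (the anti-invariant period has no analytic part at $u_i$, so nothing needs to be ``swallowed'' by the invariant cycle); otherwise your sketch matches the cited argument.
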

\proof This is  Theorem 3 from \cite{G2}. \qed

The integration path in the definition of $W_i$ is any path connecting $\Lambda$ and $u_i+\ge$ and 
$\ge\rightarrow 0$ in such a way that $u_i+\ge\rightarrow u_i$ along a straight segment. The integration
path in $w_i$ is the straight segment connecting $\Lambda - u_i$ and $\Lambda$.
The various integration paths are depicted on Figure 1.

Using Lemma A and Lemma B we get that the expression in the $\{\  \}$-brackets in 
\eqref{res_xi} is equal to
\ben
&&
\Gamma_\tau^{(\gb_++\gb_-)/2}\tensor \Gamma_\tau^{-(\gb_++\gb_-)/2} 
 \(\Psi R e^{U/z}\)\sphat\tensor\(\Psi R e^{U/z}\)\sphat  \\
&&
\left\{\sum_\pm c_\pm(\tau,\Lambda)\Gamma^\pm_{(i)}\tensor\Gamma^\mp_{(i)}
       \frac{d\Lambda}{\pm\sqrt{\Lambda}}
\right\} 
\prod_{j=1}^{k+m}\D_{\rm pt}(\q^j)\tensor \prod_{j=1}^{k+m}\D_{\rm pt}(\q^j),
\een
where the index $i$ in $ \Gamma^\pm_{(i)}$ is just to emphasize that the vertex operator is 
acting on the $i$-th factor in the product $\prod_{j=1}^{k+m}\D_{\rm pt}(\q^j)$ and the coefficients 
$c_\pm$ are given by the following formula:
\beq\label{c_pm}
\log c_\pm  = 
2\frac{(Qe^t/x_\pm)^m}{\nu_0-\nu_1} - \log (x_\pm^2f_\tau'(x_\pm))+
W_i+w_i\pm 2K + \int_{\gamma_\pm}\frac{d\xi}{2\xi},
\eeq
where the path $\gamma_+$ is the composition of $C\circ C_0$ and the 
line segment from 1 to $u_i+1$ and $\gamma_-=\gamma_+\circ\gamma'$, where 
$\gamma'$ is a simple loop around $0$ starting and ending at $1$ (see Figure 1).

We will prove that with respect to $\Lambda$ the functions $c_+$ and $c_-$ are analytic and  coincide 
in a neighborhood of $u_i$. This would finish the proof of the theorem because, according to 
A. Givental \cite{G2}, the 1-form
\ben
\sum_{\pm }\ \Gamma^\pm_{(i)}\tensor\Gamma^\mp_{(i)}\ 
       \frac{d\Lambda}{\pm\sqrt{\Lambda}}\  \T\tensor\T
\een
is analytic in $\Lambda$ whenever $\T$ is a tau-function of the KdV hierarchy. On the other hand,
according to M. Kontsevich \cite{Ko}, $\D_{\rm pt}$ is a tau-function of the KdV hierarchy, thus the
theorem would follow. 

Note that the first two terms in \eqref{c_pm}, up to a summand of 
$\Lambda/(\nu_0-\nu_1)$, coincide with the primitive (see \eqref{pr}) of the 1-form 
$\W_{\gb_\pm, \gb_\pm}.$ Thus
\ben
2\frac{(Qe^t/x_\pm(\Lambda))^m}{\nu_0-\nu_1} - \log (x_\pm^2f_\tau') =  
\int_{\Lambda_0}^\Lambda 
\W_{\gb_\pm,\gb_\pm} + \Lambda/(\nu_0-\nu_1)+C_\pm 
\een
where the constants $C_\pm$ are independent of $\Lambda$ (they depend only on $x_\pm(\Lambda_0)$). 
and their difference can be interpreted as 
\ben
C_+-C_-=\oint_{\gamma}\W_{\gb_-,\gb_-},
\een
where  $\gamma$ is a simple loop around $u_i$ (see Figure 1). Therefore $\log c_\pm$
admits the following integral presentation
\ben
&&
\log c_\pm = \lim_{\ge\rightarrow 0} \\
&&
\left(
\int_{\Lambda_0}^\Lambda \W_{\gb_\pm,\gb_\pm} + 
\int_{\Lambda}^{u_i+\ge} 
\W_{(\gb_+-\gb_-)/2,(\gb_+-\gb_-)/2}
\pm 2\int_{\Lambda}^{u_i+\ge}
\W_{(\gb_+-\gb_-)/2,(\gb_++\gb_-)/2} + \right.\\
&&
\left.
+\int_\Lambda^{u_i+\ge} \frac{d\xi}{2(\xi-u_i)} - \int_{\Lambda-u_i}^\Lambda\frac{d\xi}{2\xi} + 
\int_{\gamma_\pm}\frac{d\xi}{2\xi} +\frac{1}{\nu_0-\nu_1}\Lambda + C_\pm \right)\ .
\een
In the first integral put $\gb_\pm=(\gb_\pm+\gb_\mp)/2 + (\gb_\pm-\gb_\mp)/2$. 
After a simple computation we get:
\ben
&&
\log c_\pm  = 
\int_{\Lambda_0}^\Lambda 
\W_{(\gb_\pm+\gb_\mp)/2,(\gb_\pm+\gb_\mp)/2} +\frac{1}{\nu_0-\nu_1}\Lambda + C_\pm \\
\notag
&&
\lim_{\ge\rightarrow 0} 
\left(
\int_{\Lambda_0}^{u_i+\ge} 
\W_{(\gb_+-\gb_-)/2,(\gb_+-\gb_-)/2}  
+ 2\int_{\Lambda_0}^{u_i+\ge}
\W_{(\gb_\pm-\gb_\mp)/2,(\gb_\pm+\gb_\mp)/2} + 
\int_{\gamma_\pm'} \frac{d\xi}{2\xi}\right) \ ,
\een
where $\gamma_\pm'$ is the composition of the paths: $\gamma_\pm$ -- starting at 1 and ending at 
$\Lambda$, the straight segment between $\Lambda$ and $\Lambda-u_i$ (i.e., the integration path 
for $w_i$), and the path from $\Lambda-u_i$ to $\ge$ obtained by translating the path between 
$\Lambda$ and $u_i+\ge$.
Furthermore, we rewrite the last formula as follows:
\ben
\int_{u_i}^\Lambda 
\W_{(\gb_\pm+\gb_\mp)/2,(\gb_\pm+\gb_\mp)/2} + \frac{1}{\nu_0-\nu_1}\Lambda + C_\pm + 
\lim_{\ge\rightarrow 0} 
\left(
\int_{\Lambda_0}^{u_i+\ge} 
\W_{\gb_\pm,\gb_\pm} +\int_{\gamma_\pm'} \frac{d\xi}{2\xi}\right) \ .
\een
The first integral is analytic near $\Lambda=u_i$, because near $\Lambda=u_i$, 
the mode $I^{(0)}_{\gb_\pm}$ expands as a Laurent series in $\sqrt{\Lambda-u_i}$ with singular term
at most $1/\sqrt{(\Lambda-u_i)}$. However the analytical continuation around $\Lambda=u_i$ transforms
$I^{(0)}_{\gb_\pm}$ into $I^{(0)}_{\gb_\mp}$, hence $I^{(0)}_{\gb_\pm}+I^{(0)}_{\gb_\pm}$ must be
single-valued and in particular, it could not have singular terms. 
Since the limit is independent of $\Lambda$, the analyticity of $c_\pm$ follows. 

It remains to prove that $c_+$ and $c_-$ are equal. 
\ben
\log c_+-\log c_- = \lim_{\ge\rightarrow 0} \left\{
\oint_{\gamma_\ge} \W_{\gb_-,\gb_-}+
\oint_{\gamma'} \frac{d\xi}{2\xi} \right\},
\een  
where $\gamma_\ge$ is a closed loop around $u_i$ starting and ending at $u_i+\ge$. 
The second integral is 
$\pm\pi i$ (the sign depends on the orientation of the loop $\gamma'$). 
To compute the first one, write $\gb_-=(\gb_--\gb_+)/2 +(\gb_-+\gb_+)/2$ and 
transform  the integrand into
\ben
\(  I_{(\gb_--\gb_+)/2}^{(0)},I_{(\gb_--\gb_+)/2}^{(0)}\) + 
2\(  I_{(\gb_--\gb_+)/2}^{(0)},I_{(\gb_-+\gb_+)/2}^{(0)}\)+
\(  I_{(\gb_-+\gb_+)/2}^{(0)},I_{(\gb_-+\gb_+)/2}^{(0)}\).
\een
The last term does not contribute to the integral because it is analytic in $\Lambda$. 
The middle one, up to a factor analytic in $\Lambda$, coincides with 
$(\Lambda-u_i)^{-1/2}$, therefore its integral along $\gamma_\ge$ vanishes in 
the limit $\ge\rightarrow 0$. 
Finally, the first term has an expansion of the type
\ben
\(  I_{(\gb_--\gb_+)/2}^{(0)},I_{(\gb_--\gb_+)/2}^{(0)}\) = \frac{1}{2(\Lambda - u_i)} + O(\Lambda -u_i)
\een 
and so it contributes only $\pm\pi i$ to the integral. Thus
$(\log c_+ -\log c_-) $ is an integer multiple of $2\pi i$, which implies that $c_+=c_-$.

\appendix
\section{Proof of \coref{dJ}} \label{comb_setup}

\subsection{Combinatorial notations }\label{comb_setup:1} 
We assume that $k,m$ are co-prime. Without loss of generality we may assume that $k>m$.
For each integer $i$ with $1\leq i\leq m-1$ we define two positive integers 
$q_i, r_i$ as follows:
$$ik=mq_i+r_i, \quad \text{where } 0\leq r_i\leq m-1.$$
Note that $r_i\neq 0$, otherwise $k, m$ are not co-prime. Also put $q_0=0, q_m=k$. Clearly we have $$\frac{q_i}{k}<\frac{i}{m}<\frac{q_i+1}{k}.$$

\begin{lemma}
$\frac{i+1}{m}>\frac{q_i+1}{k}$.
\end{lemma}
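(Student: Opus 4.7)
The plan is to unravel the inequality $\frac{i+1}{m} > \frac{q_i+1}{k}$ directly into an integer inequality using the definition $ik = mq_i + r_i$ with $1 \leq r_i \leq m-1$. Clearing denominators, the claim is equivalent to $(i+1)k > m(q_i+1)$, i.e.\ $ik + k > mq_i + m$, and substituting $ik - mq_i = r_i$ reduces everything to $k + r_i > m$, equivalently $k > m - r_i$.

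From here the proof is essentially one line. Since $k$ and $m$ are co-prime and $1 \leq i \leq m-1$, the remainder $r_i$ cannot vanish (otherwise $m \mid ik$ would force $m \mid i$, contradicting $1 \leq i \leq m-1$), so $r_i \geq 1$. Combined with the standing assumption $k > m$, I get
\[
m - r_i \leq m - 1 < k,
\]
which gives the required strict inequality $k > m - r_i$, and hence $(i+1)k > m(q_i+1)$ after retracing the equivalences.

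The only thing to be careful about is the edge case $k = m$, but co-primality forces $k = m = 1$ in that case, and then the range $1 \leq i \leq m-1$ is empty, so there is nothing to prove; this justifies the ``WLOG $k > m$'' reduction made just above the lemma. I do not foresee any real obstacle: the statement is a clean combinatorial fact that falls out of the division algorithm definition of $q_i, r_i$ together with co-primality, and no further input from the geometric setup is needed.
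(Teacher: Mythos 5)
Your proof is correct and is essentially the paper's own argument: the paper likewise clears denominators to reduce the inequality to $r_i+k>m$ and observes this follows from $k>m$. Your extra appeal to co-primality to get $r_i\geq 1$ is harmless but not needed, since $r_i\geq 0$ and $k>m$ already suffice.
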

\begin{proof}
The inequality is equivalent to $r_i+k>m$, which follows from the assumption $k>m$.
\end{proof}  

We introduce the sequence $\{s_\alpha\}_{\ga=1}^{k+m}$ which is a rearrangement of the set\footnote{Note that we treat $\frac{0}{k}$ and $\frac{0}{m}$ as two {\em different} numbers.} of numbers $\{\frac{0}{k}, \frac{1}{k},...,\frac{k-1}{k}, \frac{0}{m}, \frac{1}{m},...,\frac{m-1}{m}\}$ into increasing order: $s_1=\frac{0}{k}$, and 
$$s_\alpha =\left\{
 \begin{array}{rr}
 \frac{j}{m}, \quad &\mbox{if $\alpha=j+2+\sum_{i=0}^jq_i$}\\
 \frac{q_j+l}{k},\quad &\mbox{if $\alpha=j+2+l+\sum_{i=0}^j q_i$}
 \end{array}\right.   $$
where $0\leq j\leq m-1$ and $1\leq l\leq q_{j+1}$.

We define differential operators $\delta_\alpha$ by the following rule:
\begin{itemize}
\item
if $s_\alpha\in \{\frac{0}{k}, \frac{1}{k},...,\frac{k-1}{k}\}$, define 
$$\delta_\alpha=\frac{z}{m}\frac{\partial}{\partial\tau}-\frac{\nu_0}{m}-s_\alpha kz.$$
\item
if $s_\alpha\in \{\frac{0}{m}, \frac{1}{m},...,\frac{m-1}{m}\}$, define
$$\delta_\alpha=\frac{z}{k}\frac{\partial}{\partial \tau}-\frac{\nu_1}{k}-s_\alpha mz.$$ 
\end{itemize}

Now put $D_0=D_1=id$ and for $\alpha\geq 2$, define $$D_\alpha:=Q^{-mks_\alpha}e^{-mks_\alpha\tau}\prod_{\gamma<\alpha}\delta_\gamma.$$

For $s_\alpha\in \{\frac{0}{k}, \frac{1}{k},...,\frac{k-1}{k}\}$, define $\stilde_\alpha:=-ms_\alpha$. In this case we may write $s_\alpha=\frac{q_s-a}{k}$ for some $1\leq s\leq m$ and $0\leq a\leq (q_s-q_{s-1})-1$. We have 
$$0\leq r_s+am\leq r_s+mq_s-mq_{s-1}-m=sk-((s-1)k-r_{s-1})-m=k+r_{s-1}-m<k.$$ Thus the fractional part of $-ms_\alpha=\frac{-sk+r_s+am}{k}$ is $\frac{r_s+am}{k}$.

For $s_\alpha=\frac{s}{m}$, we define $\stilde_\alpha:=-ks_\alpha$. We have $-ks=-mq_s-r_s=-m(1+q_s)+m-r_s$. Thus the fractional part of $-ks_\alpha$ is $\frac{m-r_s}{m}$.

\subsection{Proof of \coref{dJ}}
It is straightforward to see that the vector $D_\alpha J_X$ is a linear combination of $z$ times partial derivatives of $J_X$ (restricted to $H^2(\CC_{k,m})$). We will prove the following equalities:

\begin{equation}\label{0-m}
\delta_1J_X=(mg_{m/m})^{-1}z\partial_{m/m}J_X.
\end{equation}

\begin{equation}\label{0-k}
\delta_2J_X=(kg_{k/k})^{-1}z\partial_{k/k}J_X.
\end{equation}

\begin{equation}\label{other_derivative}
D_\alpha J_X=z\partial_{\stilde_\alpha}J_X, \quad \alpha \geq 3.
\end{equation}

The proof of (\ref{0-m})--(\ref{other_derivative}) requires explicit computations of the left-hand sides of them. (\ref{dJ_ik})--(\ref{dJ_jm}) will follow as a by-product.

First we show (\ref{0-m}). Applying $\delta_1$ to (\ref{J-func}) yields 
\begin{equation}\label{cal_d1}
\begin{split}
\delta_1J_X&=ze^{\tau\nu_0/z}\sum_{d>0}\frac{Q^{dm}e^{dm\tau}(\frac{z}{m}(\frac{\nu_0}{z}+dm)-\frac{\nu_0}{m})}{d!z^d\prod_{b=\{dm/k\}}^{dm/k}(\nu+bz)}{\bf 1}_{-dm/k}\\
&\quad+ze^{\tau\nu_1/z}(\frac{z}{m}\frac{\nu_1}{z}-\frac{\nu_0}{m}){\bf 1}_{0/m}+ze^{\tau\nu_1/z}\sum_{d>0}\frac{Q^{dk}e^{dk\tau}((\frac{\nu_1}{z}+dk)\frac{z}{m}-\frac{\nu_0}{m})}{\prod_{b=\{dk/m\}}^{dk/m}(\bnu+bz) d!z^d}{\bf 1}_{-dk/m}.
\end{split}
\end{equation}
Here the term with highest power in $z$ is $z\bnu{\bf 1}_{0/m}$, hence (\ref{0-m}) holds. To see (\ref{dJ_jm}) in the case $j=m$, we rearrange (\ref{cal_d1}) as follows:
\begin{equation*}
\begin{split}
&\sum_{d>0}\frac{Q^{dm}e^{dm\tau}(\frac{z}{m}(\frac{\nu_0}{z}+dm)-\frac{\nu_0}{m})}{d!z^d\prod_{b=\{dm/k\}}^{dm/k}(\nu+bz)}{\bf 1}_{-dm/k}=\sum_{d>0}\frac{Q^{dm}e^{dm\tau}dz}{d!z^d\prod_{b=\{dm/k\}}^{dm/k}(\nu+bz)}{\bf 1}_{-dm/k}\\
&=\sum_{d\geq 0}\frac{Q^{dm+m}e^{dm\tau+m\tau}}{d!z^d\prod_{b=\{\frac{dm+m}{k}\}}^{\frac{dm+m}{k}}(\nu+bz)}{\bf 1}_{-(dm+m)/k}, \quad (\text{re-indexing}),\\
&(\frac{z}{m}\frac{\nu_1}{z}-\frac{\nu_0}{m}){\bf 1}_{0/m}+\sum_{d>0}\frac{Q^{dk}e^{dk\tau}((\frac{\nu_1}{z}+dk)\frac{z}{m}-\frac{\nu_0}{m})}{\prod_{b=\{dk/m\}}^{dk/m}(\bnu+bz) d!z^d}{\bf 1}_{-dk/m}\\
&=\bnu{\bf 1}_{0/m}+\sum_{d>0}\frac{Q^{dk}e^{dk\tau}}{d!z^d}\frac{\prod_{b<\{\frac{dk-m}{m}\}}(\bnu+bz)}{\prod_{b\leq \frac{dk-m}{m}}(\bnu+bz)}{\bf 1}_{-dk/m}\\
&=\sum_{d\geq 0}\frac{Q^{dk}e^{dk\tau}}{d!z^d}\frac{\prod_{b<\{\frac{dk-m}{m}\}}(\bnu+bz)}{\prod_{b\leq \frac{dk-m}{m}}(\bnu+bz)}{\bf 1}_{\frac{-dk+m}{m}}.
\end{split}
\end{equation*}
(Note our convention on the fractional part $\{\,\}$.)

The proof of (\ref{0-k}) and (\ref{dJ_ik}) for the case $i=k$ is similar.

(\ref{other_derivative}), (\ref{dJ_ik}) for $i\neq k$, and (\ref{dJ_jm}) for $j\neq m$ will be proven together by induction on $\alpha\geq 3$.

{\bf Case $\alpha=3$:} We compute
\begin{equation}\label{cal_D3}
\begin{split}
\delta_2\delta_1J_X&=z\sum_{d>0}\frac{e^{\tau\nu_0/z}Q^{dm}e^{dm\tau}dz(\frac{z}{k}(\frac{\nu_0}{z}+dm)-\frac{\nu_1}{k})}{d!z^d\prod_{b=\{dm/k\}}^{dm/k}(\nu+bz)}{\bf 1}_{-dm/k}\\
&\quad+ze^{\tau\nu_1/z}(\frac{z}{k}\frac{\nu_1}{z}-\frac{\nu_1}{k})\bnu{\bf 1}_{0/m}+z\sum_{d>0}\frac{e^{\tau\nu_1/z}Q^{dk}e^{dk\tau}(\frac{z}{k}(\frac{\nu_1}{z}+dk)-\frac{\nu_1}{k})(\bnu+\frac{dk}{m}z)}{\prod_{b=\{dk/m\}}^{dk/m}(\bnu+bz)d!z^d}{\bf 1}_{-dk/m}\\
&=ze^{\tau\nu_0/z}\sum_{d>0}\frac{Q^{dm}e^{dm\tau}dz(\nu+\frac{dm}{k}z)}{d!z^d\prod_{b=\{dm/k\}}^{dm/k}(\nu+bz)}{\bf 1}_{-dm/k}\\
&\quad+ze^{\tau\nu_1/z}\sum_{d>0}\frac{Q^{dk}e^{dk\tau}(\bnu+\frac{dk}{m}z)dz}{\prod_{b=\{dk/m\}}^{dk/m}(\bnu+bz)d!z^d}{\bf 1}_{-dk/m}
\end{split}
\end{equation}
Here it is easy to see that the term having the highest power of $z$ is $Q^me^{m\tau}{\bf 1}_{-m/k}$. So $Q^{-m}e^{-m\tau}\delta_2\delta_1 J_X=z\partial_{\frac{k-m}{k}}J_X$, proving the case $\alpha=3$ of (\ref{other_derivative}) (note that $\stilde_3=\frac{k-m}{k}$). We can further simplify (\ref{cal_D3}) as follows:

\begin{equation*}
\begin{split}
Q^{-m}e^{-m\tau}\delta_2\delta_1 J_X&=ze^{\tau\nu_0/z}\sum_{d>0}\frac{Q^{dm-m}e^{dm\tau-m\tau}(\nu+\frac{dm}{k}z)}{(d-1)!z^{d-1}\prod_{b=\{dm/k\}}^{dm/k}(\nu+bz)}{\bf 1}_{-dm/k}\\
&\quad+ze^{\tau\nu_1/z}\sum_{d>0}\frac{Q^{dk-m}e^{dk\tau-m\tau}(\bnu+\frac{dk}{m}z)}{\prod_{b=\{dk/m\}}^{dk/m}(\bnu+bz)(d-1)!z^{d-1}}{\bf 1}_{-dk/m}\\
&=ze^{\tau\nu_0/z}\sum_{d\geq 0}\frac{Q^{dm}e^{dm\tau}(\nu+\frac{(d+1)m}{k}z)}{d!z^d\prod_{b=\{(d+1)m/k\}}^{(d+1)m/k}(\nu+bz)}{\bf 1}_{-(d+1)m/k}\\
&\quad+ze^{\tau\nu_1/z}\sum_{d\geq 0}\frac{Q^{dk+(k-m)}e^{dk\tau+(k-m)\tau}(\bnu+\frac{(d+1)k}{m}z)}{\prod_{b=\{(d+1)k/m\}}^{(d+1)k/m}(\bnu+bz)(d-1)!z^{d-1}}{\bf 1}_{-(d+1)k/m}\\
&=ze^{\tau\nu_0/z}\sum_{d\geq 0}\frac{Q^{dm}e^{dm\tau}}{d!z^d}\frac{\prod_{b<\{\frac{dm-(k-m)}{k}\}}(\nu+bz)}{\prod_{b\leq \frac{dm-(k-m)}{k}}(\nu+bz)}{\bf 1}_{\frac{-dm+(k-m)}{k}}\\
&\quad+ze^{\tau\nu_1/z}\sum_{d\geq 0}\frac{Q^{dk+(k-m)}e^{dk\tau+(k-m)\tau}}{\prod_{b=\{\frac{dk+(k-m)}{m}\}}^{\frac{dk+(k-m)}{m}}(\bnu+bz)(d-1)!z^{d-1}}{\bf 1}_{-\frac{dk+(k-m)}{m}}.
\end{split}
\end{equation*}
This is exactly the $i=k-m$ case of (\ref{dJ_ik}), as desired.

{\bf Induction step:} Now consider $3\leq \alpha\leq k+m-1$, suppose that (\ref{other_derivative}) and the corresponding (\ref{dJ_ik}) or (\ref{dJ_jm}) hold for $\alpha$. Note that 
\begin{equation}\label{increasing_alpha}
D_{\alpha+1}J_X=Q^{-kms_{\alpha+1}}e^{-kms_{\alpha+1}\tau}\delta_\alpha\left(Q^{kms_\alpha}e^{kms_\alpha\tau}z\partial_{\stilde_\alpha}J_X\right).
\end{equation}

There are three cases which we handle separately.

{\bf Case 1:} $s_\alpha, s_{\alpha+1}\in \{\frac{0}{k},...,\frac{k-1}{k}\}$.\\
We have $s_{\alpha+1}=s_\alpha+\frac{1}{k}$. We may write $\stilde_\alpha=i/k$ for some $1\leq i\leq k$. According to our discussion at the end of subsection \ref{comb_setup:1} we have $i>m$ and $\stilde_{\alpha+1}=\frac{i-m}{k}$. Also, $\delta_\alpha=\frac{z}{m}\frac{\partial}{\partial\tau}-\frac{\nu_0}{m}-s_\alpha kz$. By induction, (\ref{dJ_ik}) holds for this $i$. We will prove (\ref{other_derivative}) for $\alpha+1$ and (\ref{dJ_ik}) for $i-m$.

Using (\ref{dJ_ik}) for this $i$ we calculate 
\begin{equation}\label{cal_Da_1}
\begin{split}
&\delta_\alpha\left(Q^{kms_\alpha}e^{kms_\alpha\tau}z\partial_{\stilde_\alpha}J_X\right)\\
&=ze^{\tau\nu_0/z}\sum_{d> 0}\frac{Q^{dm+kms_\alpha}e^{dm\tau+kms_\alpha\tau}}{d!z^d}dz\frac{\prod_{b< \{\frac{dm-i}{k}\}}(\nu+bz)}{\prod_{b\leq \frac{dm-i}{k}}(\nu+bz)}{\bf 1}_{\frac{-dm+i}{k}}\\
&\quad+ze^{\tau\nu_1/z}\sum_{d\geq 0}\frac{Q^{dk+i+kms_\alpha}e^{(dk+i)\tau+kms_\alpha\tau}}{\prod_{b=\{\frac{dk+i}{m}\}}^{\frac{dk+i}{m}}(\bnu+bz) d!z^d}(\bnu+\frac{dk+i}{m}z){\bf 1}_{\frac{-(dk+i)}{m}}.
\end{split}
\end{equation}
Here the term having the highest power in $z$ is $$Q^{m+kms_\alpha}e^{m\tau+kms_\alpha\tau}\frac{\prod_{b< \{\frac{m-i}{k}\}}(\nu+bz)}{\prod_{b\leq \frac{m-i}{k}}(\nu+bz)}{\bf 1}_{\frac{-m+i}{k}}=Q^{m+kms_\alpha}e^{m\tau+kms_\alpha\tau}{\bf 1}_{\frac{-m+i}{k}},$$
because $\frac{m-i}{k}>-1$. In view of $s_{\alpha+1}=s_\alpha+\frac{1}{k}$ and (\ref{increasing_alpha}) this implies (\ref{other_derivative}) for $\alpha+1$. Moreover, we may further simplify (\ref{cal_Da_1}) to obtain: 
\begin{equation*}
\begin{split}
&ze^{\tau\nu_0/z}\sum_{d\geq 0}\frac{Q^{dm+m+kms_\alpha}e^{dm\tau+(m+kms_\alpha)\tau}}{d!z^d}\frac{\prod_{b< \{\frac{dm+m-i}{k}\}}(\nu+bz)}{\prod_{b\leq \frac{dm+m-i}{k}}(\nu+bz)}{\bf 1}_{\frac{-dm-m+i}{k}}\\
&+ze^{\tau\nu_1/z}\sum_{d\geq 0}\frac{Q^{dk+i+kms_\alpha}e^{(dk+i+kms_\alpha)\tau}}{d!z^d}\frac{1}{\prod_{b=\{\frac{dk+(i-m)}{m}\}}^{\frac{dk+(i-m)}{m}}(\bnu+bz)}{\bf 1}_{\frac{-(dk+(i-m))}{m}}.
\end{split}
\end{equation*}
Using $s_{\alpha+1}=s_\alpha+\frac{1}{k}$ and removing the factor $Q^{kms_{\alpha+1}}e^{kms_\alpha\tau}$, we obtain (\ref{dJ_ik}) for $i-m$, as desired.

{\bf Case 2:} $s_\alpha=\frac{q_s}{k}, s_{\alpha+1}=\frac{s}{m}$.\\
In this case $\stilde_\alpha=\frac{r_s}{k}, \stilde_{\alpha+1}=\frac{m-r_s}{m}$. Also, $\delta_\alpha=\frac{z}{m}\frac{\partial}{\partial\tau}-\frac{\nu_0}{m}-s_\alpha kz$. By induction, (\ref{dJ_ik}) holds for $i=r_s$. We will prove (\ref{other_derivative}) for $\alpha+1$ and (\ref{dJ_jm}) for $j=m-r_s$.

Using (\ref{dJ_ik}) for $i=r_s$, a similar calculation gives (\ref{cal_Da_1}) with $i$ replaced by $r_s$. In the first sum, the term having the highest power of $z$ is 
$$zQ^{m+kms_\alpha}e^{m\tau+kms_\alpha\tau}\frac{\prod_{b<\{\frac{m-r_s}{k}\}} (\nu+bz)}{\prod_{b\leq \frac{m-r_s}{k}}(\nu+bz)}{\bf 1}_{\frac{r_s-m}{k}}=O(1),$$
because $\frac{m-r_s}{k}=\{\frac{m-r_s}{k}\}$. In the second sum, the term having the highest power of $z$ is 
$$zQ^{r_s+kms_\alpha}e^{r_s\tau+kms_\alpha\tau}\frac{(\bnu+\frac{r_s}{m}z)}{\prod_{b=\{\frac{r_s}{m}\}}^{\frac{r_s}{m}}(\bnu+bz)}{\bf 1}_{-r_s/m}=zQ^{r_s+kms_\alpha}e^{r_s\tau+kms_\alpha\tau}{\bf 1}_{\frac{m-r_s}{m}},$$
because $0<\frac{r_s}{m}<1$. Note that $$r_s+kms_\alpha=r_s+mq_s=ks=kms_{\alpha+1}.$$
We conclude that (\ref{other_derivative}) holds for $\alpha+1$. Further simplifying (\ref{cal_Da_1}) for $i=r_s$ yields
\begin{equation*}
\begin{split}
&ze^{\tau\nu_0/z}\sum_{d\geq 0}\frac{Q^{dm+m+kms_\alpha}e^{dm\tau+(m+kms_\alpha)\tau}}{d!z^d}\frac{1}{\prod_{b=\{\frac{dm+m-r_s}{k}\}}^{\frac{dm+m-r_s}{k}}(\nu+bz)}{\bf 1}_{\frac{-(dm+(m-r_s))}{k}}\\
&+ze^{\tau\nu_1/z}\sum_{d\geq 0}\frac{Q^{dk+r_s+kms_\alpha}e^{(dk+r_s+kms_\alpha)\tau}}{d!z^d}\frac{\prod_{b<\{\frac{dk-(m-r_s)}{m}\}}(\bnu+bz)}{\prod_{b\leq \frac{dk-(m-r_s)}{m}}(\bnu+bz)}{\bf 1}_{\frac{-dk+(m-r_s))}{m}},
\end{split}
\end{equation*}
which in turn yields (\ref{dJ_jm}) for $j=m-r_s$ after removing the factor $Q^{sk}e^{sk\tau}$.

{\bf Case 3:} $s_\alpha=\frac{s}{m}, s_{\alpha+1}=\frac{q_s+1}{k}$.\\
In this case $\stilde_\alpha=\frac{m-r_s}{m}, \stilde_{\alpha+1}=\frac{k-m+r_s}{k}$. Also, $\delta_\alpha=\frac{z}{k}\frac{\partial}{\partial \tau}-\frac{\nu_1}{k}-s_\alpha mz$. By induction, (\ref{dJ_jm}) for $j=m-r_s$ holds. We will prove (\ref{other_derivative}) for $\alpha+1$ and (\ref{dJ_ik}) for $i=k-m+r_s$.

Using (\ref{dJ_jm}) for $j=m-r_s$ we calculate 
\begin{equation}\label{cal_Da_2}
\begin{split}
&\delta_\alpha\left(Q^{kms_\alpha}e^{kms_\alpha\tau}z\partial_{\stilde_\alpha}J_X\right)\\
&=ze^{\tau\nu_0/z}\sum_{d\geq 0}\frac{Q^{dm+m-r_s+kms_\alpha}e^{(dm+m-r_s+kms_\alpha)\tau}(\nu+\frac{dm+m-r_s}{k}z)}{d!z^d\prod_{b=\{\frac{dm+m-r_s}{k}\}}^{\frac{dm+m-r_s}{k}}(\nu+bz)}{\bf 1}_{-\frac{(dm+m-r_s)}{k}}\\
&\quad+ze^{\tau\nu_1/z}\sum_{d>0}\frac{Q^{dk+kms_\alpha}e^{(dk+kms_\alpha)\tau}}{(d-1)!z^{d-1}}\frac{\prod_{b<\{\frac{dk-(m-r_s)}{m}\}}(\bnu+bz)}{\prod_{b\leq\frac{dk-(m-r_s)}{m}}(\bnu+bz)}{\bf 1}_{\frac{-dk+m-r_s}{m}}.
\end{split}
\end{equation}
In the first sum, the term having the highest power in $z$ is 
$$z\frac{Q^{m-r_s+kms_\alpha}e^{(m-r_s+kms_\alpha)\tau}(\nu+\frac{m-r_s}{k}z)}{\prod_{b=\{\frac{m-r_s}{k}\}}^{\frac{m-r_s}{k}}(\nu+bz)}{\bf 1}_{-\frac{m-r_s}{k}}=zQ^{m(q_s+1)}e^{m(q_s+1)\tau}{\bf 1}_{\frac{k-m+r_s}{k}},$$
because $0<\frac{m-r_s}{k}<1$ and $m-r_s+kms_\alpha=m(q_s+1)=kms_{\alpha+1}$. In the second sum, the term having the highest power in $z$ is 
$$zQ^{k+kms_\alpha}e^{(k+kms_\alpha)\tau}\frac{\prod_{b<\{\frac{k-m+r_s}{m}\}}(\bnu+bz)}{\prod_{b\leq \frac{k-m+r_s}{m}}(\bnu+bz)}{\bf 1}_{\frac{-k+m-r_s}{m}}=O(1),$$
because $\frac{k-m+r_s}{m}>\frac{r_s}{m}>0$. We conclude that (\ref{other_derivative}) holds for $\alpha+1$. Further simplifying (\ref{cal_Da_2}) yields 
\begin{equation*}
\begin{split}
&ze^{\tau\nu_0/z}\sum_{d\geq 0}\frac{Q^{dm+kms_{\alpha+1}}e^{dm\tau+kms_{\alpha+1}\tau}}{d!z^d}\frac{\prod_{b< \{\frac{dm-(k-m+r_s)}{k}\}}(\nu+bz)}{\prod_{b\leq \frac{dm-(k-m+r_s)}{k}}(\nu+bz)}{\bf 1}_{\frac{-dm+(k-m+r_s)}{k}}\\
&+ze^{\tau\nu_1/z}\sum_{d\geq 0}\frac{Q^{dk+k+kms_\alpha}e^{(dk+k+kms_\alpha)\tau}}{d!z^d}\frac{1}{\prod_{b=\{\frac{dk+(k-m+r_s)}{m}\}}^{\frac{dk+(k-m+r_s)}{m}}(\bnu+bz)}{\bf 1}_{\frac{-(dk+(k-m+r_s))}{m}},
\end{split}
\end{equation*}
which is easily seen to yield (\ref{dJ_ik}) for $i=k-m+r_s$, after using $k+kms_\alpha=kms_{\alpha+1}+(k-m+r_s)$ and removing the factor $Q^{kms_{\alpha+1}}e^{kms_{\alpha+1}\tau}$.
This completes the induction, and the proof of the Corollary.

\section{The bi-graded equivariant reduction of the 2-Toda hierarchy}
\label{2toda}

The 2-Toda lattice hierarchy was introduced by K. Ueno and K. Takasaki 
\cite{UT}. For the purpose of Gromov-Witten theory it is more convenient to introduce a 
hierarchy, which we also call 2-Toda, obtained from the 2-Toda lattice
hierarchy by a certain infinitesimal lattice spacing limiting procedure
(see \cite{ETH}). From now on when we say 2-Toda we always mean the second one, not the original one. 

\subsection{Background on the 2-Toda hierarchy}
The 2-Toda hierarchy  consists of two sequences of flows 
on the manifold of pairs of Lax operators:
\beq\label{lax_ops}
L = \Lambda + \sum_{i\geq 0} a_i \Lambda^{-i} \quad \mbox{ and }\quad
\overline{L} = Qe^v \Lambda^{-1} + \sum_{i\geq 0} \overline{a}_i\Lambda^i,
\eeq
where $Q$ is a fixed constant, $a_i,$ $\overline{a}_j$, $v$ are 
formal series in $\ge,$ whose coefficients are infinitely differentiable
functions, $v$ has no free term: $v=v^1(x)\ge + v^2(x)\ge^2+\ldots $, 
and $\Lambda$ is a formal symbol which secretly should be thought
as the shift operator $e^{\ge \d_x},$ i.e., we demand that
$\Lambda$ and $u(x;\ge)$ satisfy the following commutation relation 
$\Lambda u(x;\ge) = u(x+\ge;\ge)\Lambda:=\Big(\sum_{k\geq 0} 
\frac{1}{k!}\ge^k\d_x^k u(x;\ge)\Big) \Lambda.$

The flows are defined by Lax type equations:
\beqa \label{2toda_1}
\ge \d_{ y_n} L = [\(L^{n}\)_+, L], &
\ge \d_{ y_n} \overline{L} = [\(L^{n}\)_+, \overline{L}],
\quad n\geq 1, \\
\label{2toda_2}
\ge \d_{\overline{ y}_n} L = -[\(\overline{L}^{\ n}\)_-, L], &
\ge \d_{\overline{ y}_n} \overline{L} = 
-[\(\overline{L}^{\ n}\)_-, \overline{L}\ ],\quad n\geq 1,
\eeqa
where if $M$ is a formal series in $\Lambda$ and $\Lambda^{- 1}$ then
we denote by $M_+$ (resp. $ M_-$ ) the series obtained from $M$ by 
truncating the terms with negative (resp. non-negative) powers
of $\Lambda.$

\medskip

Given a pair of Lax operators \eqref{lax_ops} we say that 
\ben
\P=1+w_1(x;\ge)\Lambda^{-1}+w_2(x;\ge)\Lambda^{-2}+\ldots
\een
and 
\ben
\Q = \overline{w}_0+\overline{w}_1 (\Lambda/Q) + 
\overline{w}_2(\Lambda/Q)^2 +\ldots 
\een
form a pair of {\em dressing operators} if $L = \P\Lambda\P^{-1}$ and 
$\overline{L} = \Q\, Q\Lambda^{-1}\Q^{-1}.$ According to \cite{UT}, 
Proposition 1.4, the pair of Lax operators $L$ and $\overline{L}$ is a
solution to the 2-Toda hierarchy if and only if there is a pair of dressing
operators $\P$ and $\Q$, called {\em wave operators}, such that
\beqa\label{flow1_wave}
\ge \d_{y_n}\,\P = -(L^n)_-\P, & \ge \d_{y_n}\,\Q = (L^n)_+\Q, &  \\
\label{flow2_wave}
\ge \d_{\overline{y}_n}\,\P = -(\overline{L}^n)_-\P, & 
\ge \d_{\overline{y}_n}\,\Q = (\overline{L}^n)_+\Q, & n\geq 1. 
\eeqa
Let us remark that the two sequences of time variables in \cite{UT}, denoted 
there by $x_n$ and $y_n$, correspond in our notations 
respectively to $y_n/\ge$ and $-\overline{y}_n/\ge$. The reason
for the negative sign is that our definition of the flows \eqref{2toda_2}  
differs from the one in \cite{UT} by a negative sign.

Given a non-zero function $\tau(x,\y,\overline{\y};\ge),$ where 
$\y=(y_1,y_2,\ldots)$ and $\overline{\y}=(\overline{y}_1,\overline{y}_2,\ldots)$,
we define two operators $\P=1+w_1\Lambda^{-1}+w_2\Lambda^{-2}+\ldots$ and 
$\Q=\overline{w}_0+\overline{w}_1(\Lambda/Q)+\overline{w}_2(\Lambda/Q)^{2}+\ldots,$ by  
\beq\label{def_tau_p}
1+w_1\gl^{-1}+w_2\gl^{-2} +\ldots = 
\frac{ \exp\Big(
-\sum_{n=1}^\infty \frac{\gl^{-n}}{n}\,\ge\d_{y_n}     \Big)
\tau(x,\y,\overline{\y};\ge) }{\tau(x,\y,\overline{\y};\ge)}
\eeq
and 
\beq\label{def_tau_q}
\overline{w}_0+\overline{w}_1\gl^{-1}+\overline{w}_2\gl^{-2}+\ldots =
\frac{ \exp\Big(
\sum_{n=1}^\infty \frac{\gl^{-n}}{n}\,\ge\d_{\overline{y}_n}     \Big)
\tau(x+\ge,\y,\overline{\y};\ge) }{\tau(x,\y,\overline{\y};\ge)}.
\eeq
The function $\tau(x,\y,\overline{\y};\ge)$ is called $\tau$-{\em function} of
the 2-Toda hierarchy if the corresponding operators $\P$ and $\Q$ form a 
pair of wave operators, i.e., they satisfy equations 
\eqref{flow1_wave}--\eqref{flow2_wave}.

Let us remark that our definitions of wave operators and $\tau$-functions 
are slightly different from the ones in \cite{UT}. 
Namely, we define the wave operator $\Q$ via the identity 
$\overline{L}=\Q\(Q\Lambda^{-1}\)\Q^{-1},$ while in \cite{UT} the definition
is $\overline{L}=\Q'\Lambda^{-1}(\Q')^{-1}.$ On the other hand  
$Q\Lambda^{-1}= Q^{x/\ge}\Lambda^{-1}Q^{-x/\ge}$, therefore 
$\Q'=\Q Q^{x/\ge}.$  
Our excuse for departing from the standard definition is that we prefer to 
work with wave operators that admit a quasi-classical limit 
$\ge\rightarrow 0.$ Note that if we put 
$\Q'=\overline{w}'_0 +\overline{w}'_1 \Lambda +\overline{w}_2'\Lambda+\ldots$ 
and 
$\Q= \overline{w}_0  +\overline{w}_1(\Lambda/Q)+
\overline{w}_2(\Lambda/Q)^{2}+\ldots,$ then 
$\overline{w}_i'=\overline{w}_i Q^{x/\ge}.$  This implies that if we 
define $\tau'$ the same way as $\tau$ except that in \eqref{def_tau_q}
we use $\overline{w}_i'$ instead of $\overline{w}_i$ then 
$\tau'= Q^{\frac{1}{2}\( (x/\ge)^2-(x/\ge)\)}\tau.$ 

Let us introduce the following vertex operators:
\ben
\Gamma^\pm = \exp \Big(\pm \sum_{n=1}^\infty (y_n/\ge)\gl^n \Big)
\exp \Big(\mp \sum_{n=1}^\infty \frac{\gl^{-n}}{n}\,\ge\d_{y_n} \Big)
\een
and $\overline{\Gamma}^\pm$ defined by the same formulas as $\Gamma^\pm$ 
but with $\overline{y}_n$ instead of $y_n.$ Then according to \cite{UT},
Theorem 1.7 and Proposition 1.6, the Lax operators $L=\P\Lambda\P^{-1}$
and $\overline{L}=\Q'\Lambda^{-1}(\Q')^{-1}$ form a solution of the 
2-Toda hierarchy iff $\tau'$ satisfies the following HQEs: 
\ben
\res_{\gl=\infty}\frac{d\gl}{\gl}
\Big( \gl^{l-n}\, (\Gamma^+\, \tau'_l    )\tensor
                (\Gamma^-\, \tau'_{n+1}) - 
\gl^{n-l}\, 
(\overline{\Gamma}^{\,-} \tau'_{l+1} )\tensor
(\overline{\Gamma}^{\,+} \tau'_n     )\Big) = 0,
\een 
where for every integer $r$ we put 
$\tau_r':=\tau'(x+r\ge,\y,\overline{\y};\ge).$  Substituting in the above
HQEs the formula for $\tau'$ in terms of $\tau$ we get that 
$\tau(x,\y,\overline{\y};\ge)$ is a $\tau$-function iff the following 
HQEs hold:
\beq\label{HQE_Q}
\res_{\gl=\infty}\frac{d\gl}{\gl}
\Big( \gl^{l-n}\, (\Gamma^+\, \tau_l    )\tensor
                (\Gamma^-\, \tau_{n+1}) - 
(Q\gl^{-1})^{l-n}\, 
(\overline{\Gamma}^{\,-} \tau_{l+1} )\tensor
(\overline{\Gamma}^{\,+} \tau_n     )\Big) = 0.
\eeq
\subsection{The equivariant bi-graded reduction}
According to the change of variables  \eqref{change_qy} and \eqref{change_qby}
we have $q_0^{0/k} =(\nu_0-\nu_1)y_k$ and 
$q_0^{0/m} =(\nu_1-\nu_0)\overline{y}_m.$ Note that the shift of 
$q_0^{0/k}$ (resp. $q_0^{0/m}$) by $n\ge $ is equivalent to
shifting $y_k$ (resp. $\overline{y}_m$) by $\frac{n\ge}{\nu_0-\nu_1}$
(resp. $\frac{n\ge}{\nu_1-\nu_0}$). 
Motivated by \thref{t2} we ask the following
\begin{question}
What are the solutions $L$ and $\overline{L}$ of the 2-Toda hierarchy such that 
the corresponding $\tau$-function has the form 
\beq\label{tau_reduction}
\tau(x,\y,\overline{\y};\ge)= 
\D(y_1,\ldots,y_k + \frac{x}{\nu_0-\nu_1},\ldots ,
\overline{y}_1,\ldots,\overline{y}_m + \frac{x}{\nu_1-\nu_0},\ldots; \ge),   
\eeq
i.e., $(\nu_0-\nu_1)\d_x \tau = (\d_{y_k}-\d_{\overline{y}_m})\tau?$
\end{question}
This is equivalent to the following conditions on wave operators:
\beq\label{reduction_wave}
(\nu_0-\nu_1)\d_x \P = (\d_{y_k}-\d_{\overline{y}_m})\P 
\quad \mbox{ and } \quad
(\nu_0-\nu_1)\d_x \Q = (\d_{y_k}-\d_{\overline{y}_m})\Q .
\eeq
We define the logarithms of the Lax operators $L$ and $\overline{L}$ by 
\ben
\log L := \P \log \Lambda \P^{-1} := \ge\d_x -\(\ge\d_x\P\)\P^{-1} 
\een
and 
\ben
\log \overline{L} := \Q \log\(Q\Lambda^{-1}\)\Q^{-1} :=
-\ge\d_x+\log Q +\(\ge\d_x\Q\)\Q^{-1}.   
\een
On the other hand from equations \eqref{reduction_wave} we get
\ben
\ge\d_x\P=\frac{1}{\nu_0-\nu_1}\(\ge\d_{y_k}\P-\ge\d_{\overline{y}_m}\P\)=
\frac{1}{\nu_0-\nu_1}\(-(L^k)_-\P + (\overline{L}^{\,m})_-\P\)
\een
and 
\ben
\ge\d_x\Q=\frac{1}{\nu_0-\nu_1}\(\ge\d_{y_k}\Q-\ge\d_{\overline{y}_m}\Q\)=
\frac{1}{\nu_0-\nu_1}\((L^k)_+\Q - (\overline{L}^{\,m})_+\Q\).
\een
Using our definition of logarithms of the Lax operators we write the above 
relations in the following form:
\ben
L^k + (\nu_1-\nu_0)\log L = \(L^k\)_+ + \(\overline{L}^m\)_- + (\nu_1-\nu_0)\ge\d_x  
\een
and 
\ben
\overline{L}^m + (\nu_0-\nu_1)\log \(Q^{-1}\overline{L}\) = 
\(L^k\)_+ + \(\overline{L}^m\)_- + (\nu_1-\nu_0)\ge\d_x.
\een
Now the description of the new hierarchy is the following. We define
flows on the manifold of Lax operators  
\ben
\L := \Lambda^k + \sum_{i=1}^k u_i \Lambda^{k-i} + 
\sum_{j=1}^{m-1} u_{k+j} \Lambda^{-j} + \(Qe^v\Lambda^{-1}\)^m 
+(\nu_1-\nu_0)\ge\d_x. 
\een
Note that the equations $L^k + (\nu_1-\nu_0)\log L =\L$ and 
$\overline{L}^m + (\nu_0-\nu_1)\log \(Q^{-1}\overline{L}\)=\L$ have unique
solutions of the types respectively 
$L=\Lambda + a_0 +a_1\Lambda^{-1}+a_2\lambda^{-2}+\ldots$
and $\overline{L} = Qe^v\Lambda^{-1} + \overline{a}_0 + 
\overline{a}_1\Lambda + \overline{a}_2\Lambda^2 + \ldots ,$ where $a_i$
and $\overline{a}_j$ are formal series in $\ge$ whose coefficients are
differential polynomials in $u_1,u_2,\ldots, u_N :=Qe^v.$ The flows of 
the hierarchy are given by:
\beq\label{new_hierarchy}
\ge\d_{y_n} \L = [\(L^n\)_+,\L],\quad 
\ge\d_{\overline{y}_n} \L = -[\(\overline{L}^n\)_-,\L],\quad n\geq 1.
\eeq
One can check easily that this is a commuting set of flows. Also,
by tracing back our argument, one can check that all solutions $\L$ 
are given by 
\ben
\L =\(\P\Lambda^k\P^{-1}\)_+ + \(\Q (Q\Lambda^{-1})^m\Q^{-1}\)_- 
+ (\nu_1-\nu_0)\ge\d_x, 
\een 
where $\P$ and $\Q$ are defined by formulas \eqref{def_tau_p} and 
\eqref{def_tau_q}, for some function $\tau$ of the type \eqref{tau_reduction} 
satisfying the bi-linear identities \eqref{HQE_Q}.

In order to check that we have an integrable hierarchy one needs to find
a Hamiltonian formulation and prove the completeness of the flows. 
This could be done in the same way as in the article \cite{Ge}.  
Another interesting problem is to prove that the Extended Bi-graded 
Toda Hierarchy (EBTH) defined in \cite{C} is a non-equivariant 
limit of our hierarchy (\ref{new_hierarchy}). 

It is shown in \cite{C} that EBTH is bi-hamiltonian, while the methods of E. Getzler \cite{Ge} give only one Hamiltonian structure for (\ref{new_hierarchy}). A natural question is whether the second Hamiltonian structure admits an equivariant deformation. A positive answer to the last question would be an indication that the big project of B. Dubrovin and Y. Zhang \cite{DZ} admits a generalization in the context of equivariant quantum cohomology.

\vskip20pt

\end{document}